\DeclareMathOperator{\Exp}{Exp}
\DeclareMathOperator{\Diff}{Diff}
\DeclareMathOperator{\Real}{Re}
\DeclareMathOperator{\Fix}{Fix}
\DeclareMathOperator{\Id}{Id}
\DeclareMathOperator{\id}{id}
\DeclareMathOperator{\diag}{diag}
\DeclareMathOperator{\Der}{Der}
\DeclareMathOperator{\Spec}{Spec}
\DeclareMathOperator{\codim}{codim}
\def\dif#1{\Diff_{#1}}
\def\diff#1{\widehat{\Diff}_{#1}}
\def\N{\mathbb N}
\def\R{\mathbb R}
\def\C{\mathbb C}
\def\Cd#1{(\C^#1,0)}
\def\mc#1{{\mathcal #1}}
 \newcommand{\yy}{{\bf y}}
 \newcommand{\zz}{{\bf z}}
  \newcommand{\ww}{{\bf w}}
  \newcommand{\vp}{\varphi}
  \newcommand{\wt}[1]{{\widetilde{#1}}}
   \newcommand{\wh}[1]{{\widehat{#1}}}
\newcommand{\eps}{\varepsilon}
\newcommand{\g}{\gamma}
\newcommand{\G}{\Gamma}
\renewcommand{\wt}[1]{\widetilde{#1}}
\renewcommand{\wh}[1]{\widehat{#1}}
   \theoremstyle{plain}
\newtheorem{theorem}{Theorem}[section]
\newtheorem{lemma}[theorem]{Lemma}
\newtheorem{definition}[theorem]{Definition}
\newtheorem{proposition}[theorem]{Proposition}
\theoremstyle{definition}
\newtheorem{remark}[theorem]{Remark}
\theoremstyle{plain}
\newtheorem{Theorem}{Theorem}
 \newcommand{\obra}[3]{{\sc #1} {\em #2}. {#3}.}
\author{L. L\'{o}pez-Hernanz}
\address{Departamento de Matem\'{a}tica\\
Universidade Federal de Minas Gerais\\
Belo Horizonte, Brazil} \email{lorena@mat.ufmg.br}
\thanks{First author partially supported by CNPq, Brazil, process 479134/2013-8. Both authors partially supported by Ministerio de Educaci\'{o}n y Cultura, Spain, process MTM2010-15471, and by Programa Hispano-Brasile\~{n}o de Cooperaci\'{o}n Interuniversitaria, process PHB2010-0122-PC}
\author{F. Sanz S\'{a}nchez}
\address{Departamento de \'{A}lgebra, An\'{a}lisis Matem\'{a}tico, Geometr\'{\i}a y Topolog\'{\i}a\\
Universidad de Valladolid, Spain} \email{fsanz@agt.uva.es}
\thanks{}
\date{}
\title[Parabolic curves asymptotic to formal invariant curves]
{Parabolic curves of diffeomorphisms asymptotic to formal
invariant curves}
\begin{document}
\begin{abstract}
We prove that if $F$ is a tangent to the identity diffeomorphism at $0\in\C^2$ and $\G$ is a formal invariant curve of $F$ then there exists a parabolic curve (attracting or repelling) of $F$ asymptotic to $\G$. The result is a consequence of a more general one in arbitrary dimension, where we prove the existence of parabolic curves of a tangent to the identity diffeomorphism $F$ at $0\in\C^n$ asymptotic to a given formal invariant curve under some additional conditions, expressed in terms of a reduction of $F$ to a special normal form by means of blow-ups and ramifications along the formal curve.
\end{abstract}
\maketitle

\section{Introduction}

In this paper we prove the following theorem.

\begin{Theorem}\label{Theorem1}
Let $F$ be a tangent to the
identity diffeomorphism in $\Cd2$. Given a formal invariant curve $\Gamma$ of $F$ not contained in the set of
fixed points, there exists at least one (attracting or
repelling) parabolic curve for $F$ asymptotic to $\Gamma$.
\end{Theorem}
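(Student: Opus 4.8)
The plan is to prove Theorem~\ref{Theorem1} by reducing $F$, along $\Gamma$, to a special normal form through a finite sequence of blow-ups and ramifications, applying the general $n$-dimensional existence result announced in the abstract at the final step, and then projecting the resulting parabolic curve back down. The first ingredient I would set up is the formal infinitesimal generator: since $F$ is tangent to the identity and $F\neq\Id$, there is a unique formal vector field $X$ with $F=\Exp(X)$, and the formal invariant curves of $F$ are exactly the formal separatrices of $X$. This is only a bookkeeping device, but it lets me invoke the classical reduction of singularities for the \emph{formal} field $X$ to guarantee that the geometric reduction of $F$ terminates: by Seidenberg's theorem the singularity of $X$ is resolved into elementary (reduced) singularities after finitely many point blow-ups.

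Second, I would desingularize $\Gamma$. In dimension two the embedded resolution of a plane branch is achieved by blow-ups alone: writing $\Gamma$ in Newton--Puiseux form and blowing up at the successive infinitely near points it determines, a finite composition $\pi$ turns the strict transform $\wt{\Gamma}$ into a \emph{smooth} formal curve meeting the exceptional divisor $E$ transversally at a single point $p$ lying on a smooth stratum of $E$. Simultaneously $F$ lifts to a diffeomorphism $\wt F$ that fixes $E$ pointwise near $p$ and is tangent to the identity there, and $\wt{\Gamma}$ is an $\wt F$-invariant formal separatrix. If a fractional Puiseux datum or a saddle-node-type obstruction survives, I would clear it by a ramification $u\mapsto u^{m}$, after which $\wt F$ is brought to the special normal form that the general theorem takes as hypothesis, with $\wt{\Gamma}$ one of its coordinate axes.

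Third --- and here dimension two is used decisively --- I would verify that the reduced normal form at $p$ satisfies the hypotheses of the general theorem. Choosing coordinates $(u,v)$ at $p$ with $\wt{\Gamma}=\{v=0\}$ and $E=\{u=0\}$, the restriction of $\wt F$ to $\wt{\Gamma}$ is a one-dimensional germ $u\mapsto u+c\,u^{k+1}+\cdots$. The key point is that $c\neq0$, i.e.\ the characteristic direction carried by $\wt{\Gamma}$ is \emph{nondegenerate}; this is precisely where the hypothesis $\Gamma\not\subset\Fix(F)$ enters, since blow-ups and ramifications neither create nor destroy fixed curves, so $\wt{\Gamma}\not\subset\Fix(\wt F)$ forces the leading coefficient to be nonzero. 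The Leau--Fatou flower of this nontrivial one-dimensional germ then supplies $k$ attracting and $k$ repelling petals, and the general theorem produces, for at least one of them, a genuine parabolic curve $\wt{\psi}$ of $\wt F$ asymptotic to $\wt{\Gamma}$, the half-plane of the direction deciding whether it is attracting or repelling. The finiteness of the reduction, the elementary-and-nondegenerate character of every branch, and the exclusion of the fixed-locus case are exactly the two-dimensional facts that make the conditional general statement unconditional here.

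Finally, I would transport the curve downstairs: $\psi=\pi\circ\wt{\psi}$, composed with the inverse of the ramification, is holomorphic and injective on a petal domain near $0$, is $F$-invariant, has orbits converging to $0$, and remains asymptotic to $\Gamma$ because asymptoticity is preserved by blow-down and by the analytic base change $u\mapsto u^{m}$. The step I expect to be the main obstacle is the third one: controlling the normal form output by the reduction and proving that the nondegeneracy condition always holds in dimension two, thereby ruling out the degenerate configurations that keep the general theorem conditional --- the analytic heart of the construction, a petal-domain fixed-point/majorant argument in the spirit of Hakim and \'{E}calle, being already encapsulated in the general existence result.
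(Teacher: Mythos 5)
Your overall architecture (reduce $(F,\G)$ along $\G$ to a normal form by blow-ups and ramifications, invoke the general existence theorem there, then blow the parabolic curve back down) is the same as the paper's, and your first, second and fourth steps are essentially correct. But your third step --- the one you yourself flag as the crux --- identifies the wrong condition as ``where dimension two is used decisively,'' and this leaves a genuine gap. The nondegeneracy $c\neq 0$ of the restriction $\wt{F}|_{\wt{\G}}$ is nothing but the statement $\wt{\G}\not\subset\Fix(\wt{F})$, which is inherited from the hypothesis $\G\not\subset\Fix(F)$ in \emph{every} dimension; it is built into the definition of Ramis--Sibuya form (the coefficient $\lambda\neq 0$) and is also assumed in the conditional $n$-dimensional result (Theorem~\ref{Theorem2}). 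So it cannot be the fact that makes the conditional theorem unconditional in dimension two. What Theorem~\ref{th:main-technical} actually requires, when the Poincar\'{e} rank satisfies $p\geq 1$ (the non-Briot-Bouquet case, which is exactly the new and hard case of Theorem~\ref{Theorem1}), is that one of the $k+p$ attracting directions (the half-lines $\R_+\xi$ with $\xi^{k+p}=-\lambda$) be contained in the \emph{saddle domain} $V$ determined by the principal linear part $D(x)+x^pC$. Your proposal never verifies, or even mentions, this condition, and it can genuinely fail for $F$ itself.

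The missing content is precisely Proposition~\ref{pro:RS-dim2}. In dimension two the saddle domain is automatically nonempty and is a union of $p$ disjoint open sectors of opening $\pi/p$, the same for $F$ and for $F^{-1}$, while the attracting directions of $F$ (respectively of $F^{-1}$) are given by the $(k+p)$-th roots of $1$ (respectively of $-1$) after normalization, hence spaced $2\pi/(k+p)$ apart. One then needs the elementary but essential angular counting: if $p<k$, then $2\pi/(k+p)<\pi/p$ and each saddle sector contains an attracting direction of $F$; if $p>k$, a pigeonhole/contradiction argument shows some attracting direction lies in $V$; and if $p=k$, either each saddle sector contains an attracting direction of $F$, or all attracting directions of $F$ lie on the boundaries of the saddle sectors, in which case the bisectrices of those sectors are attracting directions of $F^{-1}$ and one applies Theorem~\ref{th:main-technical} to $F^{-1}$ instead. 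This last case is exactly why Theorem~\ref{Theorem1} can only promise an attracting \emph{or repelling} curve --- a dichotomy your proof never accounts for, since you nowhere decide between $F$ and $F^{-1}$. Without this argument your step three is an unsupported assertion, and your closing claim that finiteness of the reduction plus ``the exclusion of the fixed-locus case'' makes the general statement unconditional in dimension two is false as stated.
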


So far, this result was only known when $\G$ is of ``Briot-Bouquet type'' as we explain below.

A formal curve $\Gamma$ at $0\in\C^2$ is a prime ideal of
$\C[[x,y]]$ generated by a single irreducible formal series. It is invariant for $F$ if $g\circ
F\in\Gamma$ for any $g\in\Gamma$. An attracting parabolic curve
for $F$ is an injective holomorphic map $\varphi:\Delta\to\C^2$,
where $\Delta\subset\C$ is a simply connected domain with
$0\in\partial\Delta$, which is continuous at $0$ with
$\varphi(0)=0$ and such that $\varphi(\Delta)$ is invariant and
attracted to the origin under the iteration of $F$. A repelling parabolic curve for
$F$ is an attracting parabolic curve for $F^{-1}$. We say that a
parabolic curve $\vp$ is asymptotic to $\Gamma$ if there is an
irreducible formal parametrization $\g(s)\in\C[[s]]^2$ of $\G$
such that $\vp$ has $\g(s)$ as asymptotic expansion in $\Delta$ at
$s=0$. In particular, $\vp$ can be blown-up following the
infinitely near points of $\Gamma$.

When $\Gamma$ is convergent, up to reduction of singularities of
$\G$, the existence of parabolic curves asymptotic to $\G$ can be
reduced to the case of dimension 1, where it is always guaranteed
by Leau and Fatou Flower Theorem (see \cite{Lea, Fat}).

The existence of parabolic curves asymptotic to $\Gamma$ when
$\Gamma$ is of ``Briot-Bouquet type'' (its tangent line
corresponds to a non-zero eigenvalue after reduction of
singularities of the infinitesimal generator of $F$) follows from \'{E}calle \cite{Eca} and Hakim
\cite{Hak} (see also Abate et al. \cite{Aba,Aba-B-T} and Brochero
et al. \cite{Bro-C-L}). In fact, the first author has proved in \cite{Lop}
that these parabolic curves are the sums (in the sense of Ramis
summability theory) of $\Gamma$, so they can be recovered from
$\Gamma$ by means of the Borel-Laplace summation process. Notice that Camacho and Sad Theorem \cite{Cam-S} guarantees the
existence of at least one formal curve of Briot-Bouquet type.

Invariant curves of Briot-Bouquet type of $F$ are
convergent when the infinitesimal generator of $F$ is convergent,
but in general this is not the case. In fact, there are examples
of diffeomorphisms in $\Cd 2$ without analytic invariant curve
(see \cite{Rib}). On the other hand,
formal invariant curves that are not of Briot-Bouquet type appear in every
saddle-node singularity of the infinitesimal generator whose weak
direction (the one corresponding to the zero eigenvalue) is
transversal to the exceptional divisor. For example, Euler's
vector field has a purely formal invariant curve which is not of
Briot-Bouquet type.

We prove Theorem \ref{Theorem1} as a consequence of a more general
result in arbitrary dimension $n$, which we explain briefly now.

One of our main tools is the reduction of the diffeomorphism to what we will call
{\em Ramis-Sibuya form} along $\G$: assuming that
$\G$ is not contained in the set of fixed points of $F$, there is a
process consisting in finitely many local blow-ups and ramifications, following the infinitely near points of the curve $\Gamma$, which
transforms $F$ into a diffeomorphism which can be written as
\begin{equation}\label{eq:RS-in-intro}
\wt{F}(x,\yy)=\left(x+x^{k+p+1}(1+o(1)),\,
\yy+x^k\left[(D(x)+x^pC)\yy+O(x^{p+1})\right]\right)
\end{equation}
where $(x,\yy)\in\C\times\C^{n-1}$, $k\ge1$ and either $p=0$ and $C\neq0$ or
$p\geq 1$, $D(x)$ is a diagonal matrix whose entries are
polynomials of degree at most $p-1$ with $D(0)\neq 0$ and $C$ is a
constant matrix which commutes with $D(x)$. Moreover, $\G$ is transformed
into a formal curve $\wt{\G}$, invariant for $\wt{F}$,
non-singular and transversal to the hypersurface $\{x=0\}$, the
set of fixed points of $\wt{F}$. To the infinitesimal
generator of a diffeomorphism $\wt{F}$ as in
(\ref{eq:RS-in-intro}) we can associate a system of $n-1$ formal
meromorphic ODEs which is, essentially, the starting point of
Ramis-Sibuya's paper \cite{Ram-S}, and this is the reason for our name. In fact, we
obtain a reduction of $F$ to Ramis-Sibuya form by transforming its
infinitesimal generator $\log F$ into a vector field to which we
can associate, by means of blow-ups and
ramifications, a system as in Ramis-Sibuya paper, and whose exponential will be a diffeomorphism as in (\ref{eq:RS-in-intro}). In order for this approach to work, we need to prove
first that blow-ups and ramifications are compatible with the
operation of taking the infinitesimal generator. This compatibility,
as well as the compatibility with the property of existence of
parabolic curves asymptotic to $\G$, is the content of
section~\ref{sec:blow-ups}.

In the two-dimensional case $n=2$, the reduction of a
diffeomorphism $F$ to Ramis-Sibuya form is already established in
the literature (see \cite{Aba, Bro-C-L}) and can be seen as a consequence of the well known result on reduction of singularities of vector fields (see for instance Seidenberg
\cite{Sei}).

In dimension $n\geq 3$, the reduction to Ramis-Sibuya form is not a completely unknown result either.
In fact, after some initial blow-ups at
infinitely near points of $\G$ that transform the vector field
$\log F$ into one to which we can associate a system of $n-1$
ODEs, the essential argument is an adaptation to the non-linear
case of a well known result in the theory of systems of linear
meromorphic ODEs, due to Turrittin \cite{Tur}. However, we have
included a complete proof of the result in
section~\ref{sec:reduction} since we could not find any statement
in the precise terms that we need for our purposes: for instance, we show that the centers of the blow-ups satisfy an additional property, besides the usual invariance, that guarantees the compatibility with the operation of taking the
infinitesimal generator (see the
definition of {\em permissible center} in section 5). Note
that the infinitesimal generator of a diffeomorphism as in
(\ref{eq:RS-in-intro}) has in particular a non-nilpotent linear
part and hence a reduction to Ramis-Sibuya form contains a weak
form (since we make use of ramifications) of a {\em local
uniformization} of $\log F$ {\em along} $\G$ (see
\cite{Can-M-R,Pan}).

Once we have reduction to Ramis-Sibuya form, it suffices to study the problem of existence of parabolic curves asymptotic to $\G$ for diffeomorphisms $F$ as in (\ref{eq:RS-in-intro}).
The case $p=0$ corresponds, in any
dimension, to what we have called the Briot-Bouquet case in
dimension two. In this case \'{E}calle \cite{Eca} and Hakim \cite{Hak} prove that there
exists at least one parabolic curve of $F$ asymptotic to
$\G$. We generalize their result to the case $p\geq 1$
(i.e. the eigenvalue corresponding to the tangent direction of
$\G$ vanishes) assuming an extra condition on $F$. When
$p\geq 1$ put $D(x)=\diag(d_2(x),...,d_n(x))$ and define the {\em
saddle domain} of $F$ (relatively to $\G$) as the intersection
of the ``saddle domains'' of all the linear
ODEs $x^{p+1}y'=d_j(x)y$, for $j=2,...,n$. In
section~\ref{sec:main} we give details of this definition and we
prove the following result (see Theorem~\ref{th:main-technical}): if the saddle domain is non-empty and contains one of the $(k+p)$-th
roots of $-1$ then $F$ has a parabolic curve asymptotic to
$\G$. We obtain this parabolic curve as a fixed point of a contraction map in a convenient Banach space which, for the Briot-Bouquet case, coincides with the one used by Hakim in \cite{Hak}.

Combining this result with the mentioned compatibility of the reduction to Ramis-Sibuya form with the  existence of parabolic curves, we prove the following result. Given a
diffeomorphism $F$ with an invariant formal curve $\G$ not
contained in the set of fixed points, we say that $\G$ is {\em
well placed for $F$} if, after reduction to Ramis-Sibuya form
(\ref{eq:RS-in-intro}), either we are in the Briot-Bouquet case $p=0$
or $p\geq 1$ and the
saddle domain contains one of the $(k+p)$-th roots of $-1$.

\begin{Theorem}\label{Theorem2} Let $F\in\dif1\Cd n$ and let $\G$ be an invariant formal curve of $F$
not contained in the set of fixed points of $F$. If $\Gamma$ is
well placed for $F$, then there exists at least one attracting
parabolic curve for $F$ asymptotic to $\Gamma$.
\end{Theorem}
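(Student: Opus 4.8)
The plan is to reduce Theorem~\ref{Theorem2} to the technical existence result, Theorem~\ref{th:main-technical}, by exploiting the two compatibility statements announced for sections~\ref{sec:blow-ups} and~\ref{sec:reduction}. Since $\G$ is invariant and not contained in the fixed point set of $F$, the reduction to Ramis-Sibuya form applies: after finitely many local blow-ups and ramifications following the infinitely near points of $\G$, we obtain a diffeomorphism $\wt{F}$ of the shape \eqref{eq:RS-in-intro}, together with the transformed curve $\wt{\G}$, which is invariant for $\wt{F}$, non-singular, and transversal to the fixed-point hypersurface $\{x=0\}$. The first step is therefore to invoke this reduction verbatim and record the data $(k,p,D(x),C)$ attached to $\wt{F}$.

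Next I would split into the two cases dictated by the definition of \emph{well placed}. If $p=0$ we are in the Briot-Bouquet situation, where the result of \'{E}calle~\cite{Eca} and Hakim~\cite{Hak} directly produces a parabolic curve of $\wt{F}$ asymptotic to $\wt{\G}$; if $p\geq 1$, the hypothesis that the saddle domain contains a $(k+p)$-th root of $-1$ is precisely the hypothesis of Theorem~\ref{th:main-technical}, which then yields an attracting parabolic curve $\wt{\vp}$ of $\wt{F}$ asymptotic to $\wt{\G}$. In either case the upshot is a parabolic curve for $\wt{F}$ asymptotic to the non-singular transversal curve $\wt{\G}$.

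The final and, I expect, most delicate step is to \emph{push this parabolic curve back down} through the tower of blow-ups and ramifications to obtain a parabolic curve for the original $F$ asymptotic to $\G$. This is exactly where the compatibility of blow-ups and ramifications with the existence of parabolic curves asymptotic to $\G$ (the content of section~\ref{sec:blow-ups}) is used: each elementary transformation sends a parabolic curve asymptotic to the transform of $\G$ to a parabolic curve asymptotic to $\G$ itself, because a parabolic curve that admits the formal parametrization $\g(s)$ as asymptotic expansion can be blown up following the infinitely near points of $\G$, and conversely its image under the inverse of a blow-up or ramification inherits both invariance, attraction to $0$, and the correct asymptotic expansion. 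One must check that ramifications $x\mapsto x^q$ do not destroy the attracting (versus merely repelling) character and that the domain $\Delta$, after composition with the analytic inverse transformations, remains simply connected with $0$ on its boundary; these are the technical points that the compatibility lemmas of section~\ref{sec:blow-ups} are designed to settle. Composing the finitely many descent steps then transports $\wt{\vp}$ to the desired parabolic curve of $F$ asymptotic to $\G$, completing the proof.
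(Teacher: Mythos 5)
Your architecture coincides with the paper's: reduce $(F,\G)$ to Ramis--Sibuya form via Theorem~\ref{th:reduction-singularities-F}, apply Theorem~\ref{th:main-technical} to produce an attracting parabolic curve $\wt{\vp}$ of $\wt{F}$ asymptotic to $\wt{\G}$ (incidentally, no case split on $p$ is needed, since in the Briot--Bouquet case the saddle domain is all of $\C$ and Theorem~\ref{th:main-technical} applies uniformly), and then descend through the finitely many transformations using Proposition~\ref{pro:admissible-blowing-up}. However, your treatment of the descent has a genuine gap: the two technical points you flag are not the ones that require proof, while the two hypotheses of Proposition~\ref{pro:admissible-blowing-up} that actually must be verified are left unaddressed.

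Concretely, Proposition~\ref{pro:admissible-blowing-up} only applies to a parabolic curve whose image does \emph{not} meet the exceptional divisor $E_\phi$ of the transformation being undone (a blow-down collapses its divisor, so the conclusion fails without this), and even then it yields a parabolic curve only \emph{provided the composition is injective}, which can fail for ramifications. These are inputs to the compatibility statement, not consequences of it, so writing that the compatibility lemmas ``are designed to settle'' them inverts the logical dependence. The paper closes the first point using the extra conclusion $\Fix(\wt{F})=E_\Phi$ of Theorem~\ref{th:reduction-singularities-F}: a parabolic curve contains no fixed points of $\wt{F}$ (stability forces every point of its image to converge to $0$ under iteration), hence $\wt{\vp}(\Delta)$ avoids $E_\Phi$, and consequently each intermediate curve $\Phi_j\circ\wt{\vp}$ avoids the exceptional divisor of the next transformation down. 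It closes the second point by shrinking the domain of $\wt{\vp}$ so that the full composition becomes injective, which is legitimate by part (ii) of Theorem~\ref{th:main-technical}. By contrast, the issues you raise are non-issues: invariance, stability and the attracting (rather than repelling) character transfer directly through the semiconjugacy $\phi\circ\wt{F}=F\circ\phi$, and the domain $\Delta\subset\C$ is literally unchanged by the composition $\vp=\Phi\circ\wt{\vp}$, so its simple connectedness and the condition $0\in\partial\Delta$ need no verification.
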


In the two-dimensional case $n=2$, we prove in
section~\ref{sec:dim2} that any formal invariant curve $\Gamma$ of
$F$ is well placed either for $F$ or for the inverse
diffeomorphism $F^{-1}$. Theorem~\ref{Theorem1} will then follow
from Theorem~\ref{Theorem2}.

We conclude the introduction with a final remark. Theorem~\ref{Theorem2} establishes a sufficient condition on $F$ and $\G$ in order that a parabolic curve of $F$ asymptotic to $\G$
exists. To check such a condition we need to perform the process
of reduction to Ramis-Sibuya form, which depends both on $F$ and on
$\G$. However, it is worth to mention that such a process only
involves a finite part of the Taylor expansion of $F$ at the origin and hence the condition of $\G$ being well placed for $F$
  is \emph{finitely determined}.

\section{Parabolic curves and formal invariant curves}\label{sec:definitions}

Denote by $\dif1\Cd n$ the group of germs of holomorphic
diffeomorphisms $F:\Cd n\to\Cd n$ which are tangent to the
identity, that is, $DF(0)=\Id$. Any $F\in\dif1\Cd n$ determines an
endomorphism $\mathcal{F}:g\mapsto g\circ F$ of the local algebra
$\mathcal{O}_n$ of germs of holomorphic functions at the origin.
Conversely, any endomorphism of $\mathcal{O}_n$
 inducing the identity
 in the cotangent space
$\frak{m}/\frak{m}^2$, where $\frak{m}$ is the maximal ideal,
determines an element of $\dif1\Cd n$. Replacing $\mathcal{O}_n$
by its formal completion $\wh{\mathcal{O}}_n$, we may analogously
consider the group $\diff1\Cd n$ of formal tangent to the identity
diffeomorphisms. If $F\in\diff1\Cd n$, its {\em order} is defined
as the minimum $r$ such that
$(\mathcal{F}-\id)(\frak{m})\subset\frak{m}^r\setminus\frak{m}^{r+1}$.

 To any
formal tangent to the identity diffeomorphism $F$, we can associate its {\em infinitesimal
generator}: the unique formal vector field $X\in
\Der_\C(\wh{\mathcal{O}}_n)$ which satisfies, for any
$g\in\wh{\mathcal{O}}_n$,
\begin{equation}\label{eq:exponential}
\mathcal{F}(g)=g\circ F=\sum_{j=0}^\infty \frac{{X}^j(g)}{j!}
\end{equation}
where $X^0(g)=g$ and $X^j(g)=X(X^{j-1}(g))$ for $j\geq 1$, or equivalently
\begin{equation}\label{eq:logarithm}
X(g)=\sum_{j=1}^\infty\frac{(-1)^{j+1}}{j}(\mathcal{F}-\id)^j(g).
\end{equation}
Its
{\em multiplicity} at $0$, defined as $\nu_0(X)=\min\{l\,:
\,X(\frak{m})\subset\frak{m}^l\setminus\frak{m}^{l+1}\}$,
 coincides with the order of
$F$. Thus $\nu_0(X)\geq 2$. Reciprocally, each $X\in
\Der_\C(\wh{\mathcal{O}}_n)$ with $\nu_0(X)\geq 2$ is the
infinitesimal generator of a unique formal diffeomorphism
$F\in\diff1\Cd n$ satisfying (\ref{eq:exponential}). If $X$ is the
infinitesimal generator of $F$, we write either $X=\log F$ or
$F=\Exp X$. In general, the infinitesimal generator of
$F\in\dif1\Cd n$ is not convergent (i.e. it does not belong to
$\Der_\C(\mathcal{O}_n)$), although if $X$ is convergent
then $\Exp X\in\dif1\Cd n$.

Once we choose coordinates $\zz=(z_1,\ldots,z_n)$ at $0\in\C^n$,
a formal diffeomorphism $F\in\diff1\Cd n$ is written as $ F(\zz)=(z_1\circ
F,...,z_n\circ F)=(z_1+f_1(\zz),\ldots,z_n+f_n(\zz))$, where
$f_j(\zz)\in\C[[\zz]]$ and $ \log
F=a_1(\zz)\frac{\partial}{\partial
z_1}+\cdots+a_n(\zz)\frac{\partial}{\partial z_n}$ with
$a_j(\zz)\in\C[[\zz]]$. The order of $F$ is then the minimum of
the orders of the series $f_j(\zz)$ for $j=1,...,n$. Moreover, if
$k=\nu_0(X)$, the homogeneous components of degree $k$ of the
series $a_j(\zz)$ and of $f_j(\zz)$ are the same for $j=1,...,n$.
We also have, using identities (\ref{eq:exponential}) and (\ref{eq:logarithm}), that the ideal of $\wh{\mathcal{O}}_n$ generated by $\{f_1, f_2,..., f_n\}$ is equal to the ideal generated by $\{a_1, a_2,..., a_n\}$. Thus, in the case of $F$ being holomorphic, its set of fixed points $\Fix(F)$ coincides with the singular locus of $X$ (which is then an analytic set even when $X$ is formal).

\strut

 A {\em formal curve $\G$ at $0\in\C^n$} is by
definition a prime ideal of the ring $\wh{\mathcal{O}}_n$ such
that the quotient ring $\wh{\mathcal{O}}_n/\G$ has dimension one.
Once we fix
analytic coordinates $\zz$ at the origin, a formal curve $\G$ is
determined univocally by a {\em formal parametrization}: a tuple
$\g(s)=(\g_1(s),...,\g_n(s))\in\C[[s]]^n$ so that $\g_j(0)=0$ and
$g(\g(s))\equiv 0$ if and only if $g\in\G$. The \emph{multiplicity} of $\G$, denoted by $\nu_0(\G)$, is the
minimum of the orders of the components of any {\em irreducible}
parametrization, i.e., a parametrization $\g(s)$ which cannot be
written as $\g(s)=\sigma(s^l)$ with $\sigma(s)$ another
parametrization of $\G$ and $l>1$. The curve
$\G$ is called {\em non-singular} if $\nu_0(\G)=1$.
The {\em tangent line} of $\G$ is, by definition, the complex line
$T\G$ at $0\in\C^n$ whose projective coordinates are equal to
$[\g_1(s)/s^m,\ldots,\g_n(s)/s^m]\mid_{s=0}$ for $\g(s)$ an
irreducible parametrization of $\G$ and $m=\nu_0(\G)$. For an
appropriate choice of coordinates, we may always consider a parametrization of $\G$ of the form
$\g(s)=(s^m,\g_2(s),...,\g_n(s)), $ where $m=\nu_0(\G)$ and each $\g_j(s)$ has order
at least $m$ (thus $T\G$ is transversal to $\{z_1=0\}$). Such a parametrization is called a {\em Puiseux parametrization} of $\G$.

Consider $F\in\dif1\Cd n$, $X=\log F$ and let $\G$ be a formal curve. We say that $\G$ is {\em invariant for $F$} if one of the following equivalent conditions holds:
\begin{itemize}
\item[1.] $g\circ F\in\G$ for any $g\in\G$.
\item[2.] $X(g)\in\G$ for any $g\in\G$.
\item[3.] For any parametrization $\g(s)$ of $\G$ there exists $h(s)\in\C[[s]]$ such that $
X\mid_{\g(s)}=h(s)\g'(s)$, where
$\g'(s)\in\C[[s]]^n$ is the vector of formal derivatives of the
components of $\g(s)$.
\end{itemize}

The equivalence between the two first conditions follows from identities (\ref{eq:exponential}) and (\ref{eq:logarithm}) (see \cite{Rib} for a proof in dimension $n=2$). The equivalence between the two last ones is quite well known in differential algebra; a possible proof is the following: since the equivalence is preserved under blow-ups at infinitely near points of $\G$, using reduction of singularities of $\G$ we can assume that it is non-singular, and the equivalence is then easy to prove.
Notice that in the last condition the series $h(s)$ is different from zero if and only if $\G$ is not contained in the singular locus of $X$, which is equivalent to saying that $\G$ is not contained in $\Fix(F)$.
Observe also that, since $\log F^{-1}=-\log F$, a formal curve $\G$ is
invariant for $F$ if and only if it is invariant for $F^{-1}$.

It is worth to remark that, while a planar diffeomorphism
$F\in\dif1\Cd 2$ always has a formal invariant curve (by Camacho
and Sad's Theorem \cite{Cam-S}), this is not the case for $n>2$ (see the
example of G\'omez-Mont and Luengo \cite{Gom-L}). On the other
hand, formal invariant curves of diffeomorphisms, even in
dimension two, do not need to be convergent (see for instance an
example of Rib\'on \cite{Rib}). A formal non-convergent invariant
curve $\G$ has no geometric meaning a priori. We look for the
possibility of finding a ``geometric realization'' of $\G$:
roughly speaking, an analytic object attached to $\G$ and
invariant for $F$. A more precise statement of the problem is to
find a parabolic curve of $F$ asymptotic to $\G$.
\begin{definition}\label{def:parabolic-curve}
 An {\em attracting parabolic curve for $F\in\dif1\Cd n$} is an injective
holomorphic map $\vp:\Delta\to\C^n$ where $\Delta$ is a simply
connected domain in $\C$ with $0\in\partial\Delta$ such that $\varphi(\Delta)$ is contained in the domain of a representant of $F$ and which
satisfies:
\begin{enumerate}
\item[1.]  $\varphi$ is continuous at the origin putting $\varphi(0)=0$
(continuity). \item[2.] $F(\varphi(\Delta))\subset \varphi(\Delta)$
(invariance). \item[3.]
$\lim_{j\to\infty}\varphi^{-1}(F^j(\varphi(s)))=0\ \text{ for all }
s\in\Delta$ (stability).
\end{enumerate}

A {\em repelling parabolic curve of $F$} is an attracting
parabolic curve of the inverse diffeomorphism $F^{-1}$.
\end{definition}

\begin{definition}\label{def:asymptotic}
A parabolic curve $\vp:\Delta\to\C^n$ will be called {\em asymptotic
to the formal curve $\G$} if there exists a parametrization
$\g(s)\in\C[[s]]^n$ of $\G$ in some coordinates $\zz$ of $\C^n$ such
that $\zz\circ\vp$ has $\g(s)$ as asymptotic expansion at $s=0$ in
$\Delta$; i.e., for any $N\in\N$, there exist constants
$c_N,\varepsilon_N>0$ such that
$$
\|\zz\circ\vp(s)-J_N\g(s)\|\leq c_N |s|^{N+1}\;\;\forall
s\in\Delta\mbox{ with }|s|\leq\varepsilon_N,
$$
where $J_N\g$ denotes the vector of truncations of the components
of $\g$ up to order $N$, i.e. $\|\g(s)-J_N\g(s)\|=o(s^N)$.
\end{definition}
A similar definition appears in \cite{Aba-T} under the name ``robust parabolic curve''.

\strut

 If the components of a parametrization of $\G$ were
multisummable (in the sense of Malgrange and Ramis
\cite{Ram,Mal-R}), its multisum $\vp:\Delta\to\C^n$, defined in
some open sector $\Delta$, could be a good candidate to solve the
problem of finding a parabolic curve asymptotic to $\G$. However,
as far as we know, multisummability of $\G$ is an open question in
general, except for the case where $\log F$ is a holomorphic
vector field (see Braaksma \cite{Bra}). We should also mention the
work of L\'{o}pez \cite{Lop}, where it is proved that $\G$ is
summable (and its sum gives a parabolic curve) in the case $n=2$
if $\G$ is of Briot-Bouquet type (definition below), and the work
of Brochero and L\'{o}pez \cite{Bro-L} where the Gevrey character
of the infinitesimal generator $\log F$ is proved, thus implying
the Gevrey character of $\G$ (see \cite{Can}).

Instead of searching for multisummability of $\G$, we look for
parabolic curves asymptotic to invariant formal curves by an
approach similar to the one in Abate \cite{Aba} and
Brochero et al. \cite{Bro-C-L}: we first analyze, in
section~\ref{sec:main}, the case where $F$ can be written in a
particular reduced form, called Ramis-Sibuya form, and then, in section~\ref{sec:reduction},
we prove that we can reduce $F$ to Ramis-Sibuya form by means of blow-ups and ramifications. Reduction to Ramis-Sibuya form in the case of two-dimensional diffeomorphisms is
considerably easier and comes from the reduction of singularities of planar vector fields. We consider this case separately in section~\ref{sec:dim2}, where we prove Theorem~\ref{Theorem1}.

\section{Parabolic curves for diffeomorphisms in Ramis-Sibuya
form}\label{sec:main}

Throughout this section, we consider $F\in\dif1\Cd n$ having a
formal invariant non-singular curve $\G$.

For convenience, a system of local analytic coordinates
$(x,\yy=(y_2,...,y_n))$ at $0\in\C^n$ will be called {\em adapted
for $\G$} if $\G$ is transversal to the hypersurface $\{x=0\}$, thus having in those coordinates a parametrization
as a graph $\g(x)=(x,\g_2(x),...,\g_n(x))\in\C[[x]]^n$. The {\em
order of contact} of $\G$ with the $x$-axis is the minimum of the
orders of the components $\g_j(x)$.

\begin{definition}\label{def:RS-form}
We say that the pair $(F,\G)$ is in {\em Ramis-Sibuya
form} (\emph{RS-form} for short) if there exist adapted
coordinates $(x,\yy)$ for $\G$ at $0\in\C^n$ for which $F$ is
written as
\begin{equation}\label{eq:RS-form}
F(x,\yy)=\left(x+\lambda x^{k+p+1}(1+\psi(x,\yy)),\,\,
\yy+x^k\left[b(x)+(D(x)+x^pC+x^{p+1}A(x))\yy+O(\|\yy\|^2)\right]\right),
\end{equation}
where $k\geq 1$, $p\geq 0$, $\lambda\in\C^*$, $\psi\in\C\{x,\yy\}$
with $\psi(0)=0$, $b(x)\in\C\{x\}^{n-1}$ with $b(0)=0$, $A(x)$ is
a matrix with entries in $\C\{x\}$ and, moreover,
\begin{enumerate}[(i)]
\item either $p=0$ and $C\neq 0$ (this case is called {\em
Briot-Bouquet case}) or
\item $p\geq 1$, $D(x)$ is a diagonal matrix of polynomials of degree at most $p-1$ with $D(0)\neq 0$ and $D(x)$ commutes
 with $C$.
 \end{enumerate}
Such adapted coordinates $(x,\yy)$ will be called {\em
RS-coordinates} for $(F,\G)$. The matrix $D(x)+x^pC$ is called the
{\em principal linear part} of $(F,\G)$ in the coordinates
$(x,\yy)$.
\end{definition}

Notice that both $k$ and $p$ do not depend on the choice of
RS-coordinates. In fact $k+1$ is the order of the diffeomorphism
$F$ and $k+p+1$ is the order of the restriction
$F|_\G\in\diff1(\C,0)$. The number $p$ is called the {\em Poincar\'{e}
rank} of $(F,\G)$. Notice also that the set of fixed points of $F$ is equal to $\{x=0\}$, since $\lambda\neq 0$, and then $\G$ is not contained in $\Fix(F)$.

The infinitesimal generator of
$F$ as in (\ref{eq:RS-form}) is written as
\begin{equation}\label{eq:X-RS-form}
X=\log F=x^k\left[x^{p+1}u(x,\yy)\frac{\partial}{\partial
x}+\left(c(x)+(\mathcal{D}(x)+x^p\mathcal{C}+x^{p+1}\mathcal{A}(x))\yy+O(\|\yy\|^2)\right)\frac{\partial}{\partial\yy}
\right]
\end{equation}
where $u(0,0)=\lambda$, $c(0)=0$ and either $p=0$ and $\mathcal{C}\neq0$ or $p\ge1$ and $\mathcal{D}(x)$ is a diagonal matrix of polynomials of degree at most $p-1$ which commutes with $\mathcal{C}$ and such that $\mathcal{D}(0)\neq 0$. Reciprocally, if $X$ is a vector field as in (\ref{eq:X-RS-form}) satisfying the conditions above, then the diffeomorphism $\Exp X$ is in Ramis-Sibuya form. The matrix $\mathcal{D}(x)+x^p\mathcal{C}$ is called the
{\em principal linear part} of $X$ in the coordinates $(x,\yy)$. To a vector field of the form
(\ref{eq:X-RS-form}) we can associate the system of $n-1$ formal ODEs
$$x^{p+1}u(x,\yy)\yy'=c(x)+(\mc D(x)+x^p\mc C+x^{p+1}\mc A(x))\yy+O(\|\yy\|^2),$$
which has irregular singular point at $x=0$ and Poincar\'{e} rank equal to $p$. The properties assumed for the principal linear part $\mc D(x)+x^p\mc C$ are essentially those
considered in the work of Ramis and Sibuya \cite{Ram-S}, where
multisummability of the formal solutions is proved in the case where the coefficients of the system are convergent. This is the
reason of our choice of the name ``Ramis-Sibuya  form'' in
Definition~\ref{def:RS-form}.

\begin{remark}\label{rk:RS-form} Assume that $(F,\G)$ is in RS-form, written
as in (\ref{eq:RS-form}) in RS-coordinates $(x,\yy)$.
\begin{enumerate}[(a)]
\item Let $l$ denote the order of contact of $\G$ with the
$x$-axis. Then any component of the vector $b(x)\in\C\{x\}^{n-1}$
has order at least $l$. \item If $(\bar{x},\bar{\yy})$ are new
coordinates at $0\in\C^n$ obtained from $(x,\yy)$ by a change of
variables of the form
 $$
  \bar{x}=h(x),\;\;\bar{\yy}=\yy+\Lambda(x,\yy),\;\;
  \mbox{where }h'(0)\neq
  0,\frac{\partial\Lambda}{\partial\yy}(x,0)=O(x^{p+1})
$$
  then $(\bar{x},\bar{\yy})$ are also RS-coordinates. Moreover, if $h(x)-x=O(x^{p+1})$
   then the principal linear parts
  of $(F,\G)$ in the coordinates $(x,\yy)$ and $(\bar{x},\bar{\yy})$ are the same.

\end{enumerate}
\end{remark}

Now we deal with the problem of finding a parabolic curve of $F$
asymptotic to $\G$ when $(F,\G)$ is in RS-form. With the notations
of Definition~\ref{def:RS-form}, we define the {\em attracting
directions} of $(F,\G)$ (with respect to the coordinates
$(x,\yy)$) as the $k+p$ half-lines $\R_+\xi$, where $\xi^{k+p}=-\lambda$.
If $\G$ is convergent, the restriction $F|_\G\in\dif1(\C,0)$ has order $k+p+1$
and, by Leau and Fatou's theorem, any such direction is the
bisectrix of an attracting petal of $F|_\G$, thus giving rise to
$k+p$ attracting parabolic curves of $F$ defined in sectors
bisected by these directions and asymptotic to $\G$. When $\G$ is formal,
\'{E}calle \cite{Eca} and Hakim \cite{Hak} obtain the same conclusion in the
Briot-Bouquet case. Here, we extend this result to the non Briot-Bouquet case ($p\geq 1$),
proving the existence of a parabolic curve attached to a fixed attracting
direction $\tau$ if some extra hypothesis (concerning $\tau$) is
satisfied. Our arguments are based mainly on Hakim's paper;
however, since our statement is not exactly the same as Hakim's
one, our result is stated for $(F,\G)$ irrespective of the value
of the Poincar\'{e} rank $p$.

Let $D(x)+x^pC$ be the principal linear part of $(F,\G)$
with respect to coordinates $(x,\yy)$. Write $D(x)=\diag(d_2(x),...,d_n(x))$ where each
$d_j(x)$ is a polynomial of degree at most $p-1$. For each $j$
such that $d_j(x)\not\equiv 0$, we denote by $\nu_j$ the order of
$d_j$ at $0$ and write $ d_j(x)=d_{j0}x^{\nu_j}+\cdots$ with
$d_{j0}\neq 0$. Observe that if $p\geq 1$ then $\nu_j=0$ for at
least one $j$, since $D(0)\neq0$. Notice also that if $\mc{D}(x)+x^p\mc{C}$ is the principal linear part
of $\log F$ then the $j$-th entry of $\mathcal{D}(x)$ is also of the form $d_{j0}x^{\nu_j}+\cdots$ if $d_j(x)\not\equiv 0$, and is equal to zero if $d_j(x)\equiv0$.

Let us define the
 {\em saddle
domain} of $(F,\G)$ (in the coordinates $(x,\yy)$) as the set
\begin{equation*}
V=\mathop{\bigcap_{2\le j\le n}}\limits_{d_j\not\equiv0}
\left\{x\in\C\,:\,\Real\left(-\frac{d_{j0}}{\lambda
x^{p-\nu_j}}\right)>0\right\}\subset\C.
\end{equation*}
We notice that the saddle domain $V$ is the whole plane $\C$ in
the Briot-Bouquet case $p=0$, and a union of proper open sectors
at the origin in the case $p\geq 1$. Observe that $V$ is non-empty if $n=2$. Also, if $p\geq 1$ and $D(0)$ is invertible then $V\neq\emptyset$ implies that the eigenvalues of $D(0)$ belong to the so called ``Poincar\'{e} domain'', i.e. their convex hull in the plane does not contain $0$.
The name ``saddle domain'' comes from the
following fact: if $p\geq 1$, the linear system $
x^{p+1}Y'=\mathcal{\overline{D}}(x)Y$ of ODEs, where $\mathcal{\overline{D}}(x)$ is the diagonal submatrix of $\mc{D}(x)$ formed by its nonzero entries, has $Y=0$ as the unique
solution which is bounded in closed subsectors of $V$, while the
other ones are unbounded along any direction contained in $V$.

We are now ready to state the main result in this section. We use the standard notation
$S(\tau,\alpha,r)$ for the open sector at $0\in\C$ bisected by $\tau$
with opening $\alpha>0$ and radius $r\in\R^+\cup\{\infty\}$.

\begin{theorem}\label{th:main-technical}
Assume that $F\in\dif1\Cd n$ has a formal invariant curve $\G$
such that $(F,\G)$ is in Ramis-Sibuya  form. Let $\tau$ be an
attracting direction and let $V$ be the saddle domain
in some RS-coordinates $(x,\yy)$. If $\tau$ is contained in $V$ then
there exists an attracting parabolic curve for $F$ asymptotic to
$\G$. More precisely, there exist positive numbers
$\eta_0,\delta_0$ with $S(\tau,\eta_0,\delta_0)\subset V$ and a
holomorphic map $\bar{\vp}:S(\tau,\eta_0,\delta_0)\to\C^{n-1}$ such
that

\begin{enumerate}[(i)]
\item  The map $\vp:S(\tau,\eta_0,\delta_0)\to\C^n$ given by
$x\mapsto(x,\bar{\vp}(x))$ in the coordinates $(x,\yy)$ is an attracting parabolic curve for
$F$ asymptotic to $\G$.
\item For any $0<\eta\leq\eta_0$ there
exists $0<\delta\leq\delta_0$ such that, if $\delta'\leq\delta$,
then the restriction of $\vp$ to $S(\tau,\eta,\delta')$ is also an
attracting parabolic curve for $F$.
\item If $W$ is any sector
containing the direction $\tau$ and $\psi:W\to\C^n$ is an attracting parabolic curve of $F$ asymptotic to $\G$, defined by $\psi(x)=(x,\bar\psi(x))$ in the coordinates $(x,\yy)$, then $\vp\equiv\psi$ in $S(\tau,\eta_0,\delta_0)\cap W$.
\end{enumerate}
\end{theorem}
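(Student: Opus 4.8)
The plan is to realize the parabolic curve as the graph $\yy=\bar\vp(x)$ of a holomorphic map over a sector $S=S(\tau,\eta_0,\delta_0)\subset V$, obtained as the unique fixed point of a contraction operator on a suitable Banach space of holomorphic maps with a weighted supremum norm. This is the strategy of Hakim \cite{Hak}; the new ingredient for general Poincar\'{e} rank $p\geq1$ is that the saddle domain is exactly the region where the transverse linear dynamics can be controlled, so that the operator remains a contraction and, in the Briot--Bouquet case $p=0$ (where $V=\C$), the construction reduces to Hakim's.

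First I would normalize coordinates. Using Remark~\ref{rk:RS-form}(b), I replace $\yy$ by $\yy+J_N\g(x)$, where $\g(x)=(x,\g_2(x),\dots,\g_n(x))$ is the graph parametrization of $\G$ and $N$ is large. This keeps $(F,\G)$ in RS-form with the same principal linear part, makes $\G$ tangent to the $x$-axis to order $N$, and forces the inhomogeneous term $b(x)$ to have order $\geq N$. Writing $F=(f,G)$ with $f(x,\yy)=x+\lambda x^{k+p+1}(1+\psi)$ and $G(x,\yy)=\yy+x^k[b(x)+M(x)\yy+O(\|\yy\|^2)]$, where $M(x)=D(x)+x^pC+x^{p+1}A(x)$, the invariance of the graph becomes the functional equation
\[
\bar\vp\bigl(f(x,\bar\vp(x))\bigr)=G(x,\bar\vp(x)).
\]

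The main analytic step is to turn this into a fixed-point problem. For a candidate $\bar\vp$ I define its forward orbit $x_0=x$, $x_{j+1}=f(x_j,\bar\vp(x_j))$; since $\bar\vp\to0$, this is a genuine one-dimensional petal dynamics, and when $\tau$ (hence a whole subsector) lies in $V$ one obtains the standard estimate $|x_j|\sim c\,j^{-1/(k+p)}$ inside $S$. Unfolding the invariance equation by variation of parameters along this orbit, with transition matrices $\Phi_j=\prod_{l<j}(I+x_l^kM(x_l))$, expresses the bounded solution as a series, and I take $T(\bar\vp)(x)=-\sum_{j\geq0}\Phi_{j+1}^{-1}x_j^k[\,b(x_j)+(\text{quadratic in }\bar\vp)\,]$ as the operator. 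The heart of the matter is the estimate on $\Phi_{j}^{-1}$: comparing $\sum_l x_l^k d_j(x_l)$ with the integral $\lambda^{-1}\!\int x^{\nu_j-p-1}dx$ shows that the logarithm of the $j$-th diagonal block behaves like $-d_{j0}/\bigl(\lambda(p-\nu_j)x^{p-\nu_j}\bigr)$, whose real part is positive precisely under the saddle-domain inequality $\Real\bigl(-d_{j0}/(\lambda x^{p-\nu_j})\bigr)>0$. Thus $\Phi_j$ grows and $\Phi_j^{-1}$ decays at a rate that outweighs the merely polynomial decay of $x_j^k$, making the series converge and $T$ a contraction of a small ball into itself. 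This uniform comparison, carried out for every $j$ with $d_j\not\equiv0$ while harmlessly absorbing the directions $d_j\equiv0$, is the step I expect to be the main obstacle.

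Granting the fixed point $\bar\vp$, statement (i) follows at once: $\vp(x)=(x,\bar\vp(x))$ is holomorphic and injective (its first coordinate is $x$), and it is invariant and stable by construction; it is asymptotic to $\G$ because $\g$ is the unique formal solution of the invariance equation and the same estimates applied to the truncations $J_N\g$ yield $\zz\circ\vp-J_N\g=O(x^{N+1})$. For (ii), the construction produces a fixed point on $S(\tau,\eta,\delta')$ for every $\eta\leq\eta_0$ and every small $\delta'$, and the restriction of $\vp$ solves the same equation there, hence is again an attracting parabolic curve. For (iii), given another parabolic curve $\psi=(x,\bar\psi)$ asymptotic to $\G$ on a sector $W\ni\tau$, both $\bar\vp$ and $\bar\psi$ solve the invariance equation and are $o(x^N)$ for all $N$; feeding their difference into the same saddle-domain telescoping estimate forces it to vanish, so $\vp\equiv\psi$ on $S\cap W$.
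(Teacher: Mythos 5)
Your overall route is the paper's route: raise the contact of the coordinates with $\G$, study the one-dimensional Fatou dynamics $x_{j+1}=f(x_j,u(x_j))$, and obtain the graph as the fixed point of an operator that sums the nonlinear defect along the orbit, weighted by inverses of the linearized transition maps, with the saddle-domain hypothesis providing exactly the decay of those weights. Your discrete products $\Phi_j=\prod_{l<j}(I+x_l^kM(x_l))$ are an equivalent substitute for the paper's weights $E(x_0)E(x_j)^{-1}$, where $E(x)=\wh{E}(x)x^{-\mc{C}}$ is the explicit fundamental solution of the linear system attached to $\log F$; identity (\ref{eq:tildeA}) and Lemma~\ref{lm:ExEF1} are precisely the bridge between the two formulations, and the closed diagonal-plus-commuting form of $E$ is what keeps the estimates manageable (e.g. $\|\exp a-\exp b\|\le|a-b|$ when all diagonal entries of $a,b$ lie in the left half-plane). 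Your identification of where the saddle domain enters (positivity of $\Real\bigl(d_{j0}x_l^{k+\nu_j}\bigr)$ along orbits, which on the attracting ray coincides with the saddle-domain inequality) matches the paper's interval $I$ and Lemma~\ref{lm:sum-of-xj}.

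The genuine gap is in the sentence ``making the series converge and $T$ a contraction of a small ball into itself,'' with the ball taken in a weighted supremum norm. To prove $T$ is a contraction you must compare the orbits $x_j$ of $f_u$ and $y_j$ of $f_v$ for two candidates $u,v$; the bound $|y_j-x_j|\lesssim|x_0|^m{\bf n}(v-u)$ and the terms $\|v(y_j)-v(x_j)\|$ require a uniform Lipschitz (derivative) bound on the candidates, which a sup-norm ball does not provide --- Cauchy estimates only yield derivative bounds on strictly smaller sectors, and near the boundary of the sector the discs needed escape the region of control. This is why the paper works in the closed convex subset $\mc{H}^m_{\eta,\delta}$, which imposes $\|u'(x)\|\le|x|^{m-p-2}$ in addition to ${\bf n}(u)\le1$, and why it must verify (the ``tedious'' step) that $T$ preserves this derivative bound; your uniqueness argument for (iii) needs the same control, since the two solutions generate different orbits. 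Two smaller omissions: ``harmlessly absorbing the directions $d_j\equiv0$'' conceals the requirement $m\ge p+2-\min\{\Real(\alpha):\alpha\in\Spec(\mc{C})\}$ (in those directions the weights can grow polynomially in $j$, and only a sufficiently high contact order compensates --- this is the paper's choice of $m_0$); and asymptoticity of $\vp$ to $\G$ to \emph{all} orders requires running the construction at every truncation level and invoking uniqueness to see that the resulting curves patch together, which is the content of Proposition~\ref{pro:main-technical-m}(c) rather than a single choice of ``large $N$.''
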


\begin{remark}
Although both the attracting direction $\tau$ and the saddle domain
$V$ depend on the RS-coordinates, the condition $\tau\subset V$
is independent of the coordinates.
\end{remark}

The rest of this section is devoted to the proof of
Theorem~\ref{th:main-technical}.

\strut

{\em Choosing coordinates.-} We start with an expression of $F$ as
in (\ref{eq:RS-form}) in the given coordinates $(x,\yy)$. Up to a
linear change of the form $x\mapsto\alpha x$, we can assume that
$\lambda=-1$. Moreover,
 it is a routine to check that
a polynomial change of variables of the form $ x\mapsto x+P(x)$
with $P(x)=O(x^{p+1})$ permits to consider new RS-coordinates
$(x,\yy)$ for which
$$
  x\circ F\,(x,\yy)=x-x^{k+p+1}+O(x^{k+p+1}\|\yy\|,x^{k+2p+2}).
$$
Notice, using Remark~\ref{rk:RS-form}, that the principal linear
part has not changed. We will need to consider coordinates having
high contact with $\G$. In fact, consider a Puiseux
parametrization $\g(x)=(x,\bar{\g}(x))\in\C[[x]]^n$ of $\G$ in the
coordinates $(x,\yy)$. For any $m\geq 2$, we consider new
coordinates $(x,\yy_m)$ with $\yy_m=\yy-J_{m+p-1}\bar{\g}(x)$ (recall that
$J_{N}\bar{\g}(x)$ denotes the truncation of
$\bar{\g}(x)$ up to order $N$). Then $(x,\yy_m)$ are new
RS-coordinates for which $\G$ has order of contact at least $m+p$
with the $x$-axis. Hence, using Remark~\ref{rk:RS-form}, $F$ is
written as
\begin{equation}\label{eq:F-m}
\begin{array}{rcl}
  x\circ F\,(x,\yy_m)&=&x-x^{k+p+1}+O(x^{k+p+1}\|\yy_m\|,x^{k+2p+2}) \\
  \yy_m\circ F\,(x,\yy_m)&=&\yy_m+x^k(D(x)+x^{p}C)\yy_m+
  O(x^{k+p+1}\|\yy_m\|,x^k\|\yy_m\|^2,x^{k+p+m})
\end{array}
\end{equation}
Notice that the principal linear part of $(F,\G)$ with respect to
$(x,\yy_m)$ does not depend on $m$. Neither do the attracting
directions and the saddle domain $V$, since the coordinate $x$ is
preserved. Write, as above, $D(x)=\diag(d_2(x),...,d_n(x))$
and $d_j(x)=d_{j0}x^{\nu_j}+\cdots$ with $d_{j0}\neq 0$ if
$d_j(x)\not\equiv 0$.

\strut

{\em Reduction to parabolic curves having $m$-th order contact
with $\G$.-} Given an integer $m\geq p+2$ and constants
$\eta,\delta>0$, we consider the Banach space $(\mc
B^m_{\eta,\delta},{\bf n})$ defined by
$$
\mc B^m_{\eta,\delta}=
\left\{u\in\mathcal{O}(S(\tau,\eta,\delta),\C^{n-1})\,: \,{\bf
n}(u)=\sup\Bigl\{\frac{\|u(x)\|}{|x|^{m-1}}:x\in
S(\tau,\eta,\delta)\Bigr\}<\infty\right\}$$  and the closed subspace
$$\mc{H}^m_{\eta,\delta}=\{u\in\mc B^m_{\eta,\delta}: {\bf
n}(u)\leq 1, \|u'(x)\|\le|x|^{m-p-2}\, \forall x\in S(\tau,\eta,\delta)\}.$$

Let $I$ be the interval of points $\eta\in\R$ satisfying
$0<\eta<2\pi/(k+p)$ and
$$
S(\tau,\eta,\infty)\subset V\cap\mathop{\bigcap_{2\le j\le
n}}\limits_{d_j\not\equiv0}
\left\{x\in\C\,:\,\Real\left(d_{j0}x^{k+\nu_j}\right)>0\right\}.
$$
Notice that $I\neq\emptyset$ by the hypothesis on the attracting
direction $\tau$.

Fix an integer $m_0$ such that $m_0\ge\max\{p+2,
p+2-\min\{\Real(\alpha_j)\,:\,\alpha_j\in\Spec(\mathcal{C})\}\}$,
where $\mathcal{D}(x)+x^p\mathcal{C}$ is the principal linear part
of the infinitesimal generator of $F$, with the notations of
(\ref{eq:X-RS-form}).

\begin{proposition}\label{pro:main-technical-m}
For any $m\geq m_0$ and any $\eta\in I$ there exists
$\delta=\delta(m,\eta)>0$, with $S(\tau,\eta,\delta)$
contained in the domain of the
coordinates $(x,\yy_m)$, and for any $\delta'$ with $0<\delta'\le\delta$ there exists a unique $u_{m,\delta'}\in\mc H^m_{\eta,\delta'}$ such that:

\begin{enumerate}[(a)]
\item  The map $\vp_{m,\delta'}:S(\tau,\eta,\delta')\to\C^n$ written in the
coordinates $(x,\yy_m)$ as $\vp_{m,\delta'}:x\mapsto(x,u_{m,\delta'}(x))$ is an
attracting parabolic curve for $F$.
 \item For any
$0<\delta'\le\delta$, we have $u_{m,\delta'}=u_{m,\delta}|_{S(\tau,\eta,\delta')}$.
\item If $W$ is a sector containing $S(\tau,\eta,\delta')$ for some
$\delta'\le\delta$ and $\psi:W\to\C^n$ is an attracting parabolic curve for $F$
written in coordinates $(x,\yy_m)$ as $\psi(x)=(x,v(x))$, with
$v|_{S(\tau,\eta,\delta')}\in\mc{H}^m_{\eta,\delta'}$, then
$u_{m,\delta}\equiv v$ in $S(\tau,\eta,\delta')$.
\end{enumerate}
\end{proposition}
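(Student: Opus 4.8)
\emph{Setup of the functional equation.}
My plan is to realize the graph as the unique fixed point of a contraction operator on $\mc H^m_{\eta,\delta}$, following the scheme of Hakim. Writing $F=(F_0,F_1)$ in the coordinates $(x,\yy_m)$, where $F_0=x\circ F$ and $F_1=\yy_m\circ F$, the graph $x\mapsto(x,u(x))$ is invariant for $F$ exactly when $u\big(F_0(x,u(x))\big)=F_1(x,u(x))$ for all $x$. Using (\ref{eq:F-m}) I would write $F_1(x,u(x))=L(x)\,u(x)+E(x,u(x))$, where $L(x)=\Id+x^k(D(x)+x^pC)$ is the linear cocycle generator and $E$ collects the remainders $O(x^{k+p+1}\|u\|,\,x^k\|u\|^2,\,x^{k+p+m})$. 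The invariance condition then becomes the linear recurrence $u(x_1)=L(x)u(x)+E(x,u(x))$ along the forward orbit $x_0=x$, $x_{i+1}=F_0(x_i,u(x_i))$.

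\emph{Base dynamics, cocycle and the operator.}
First I would analyze this base dynamics: for $\eta\in I$ and $\delta$ small, using that $\tau$ is an attracting direction and $F_0(x,\yy_m)=x-x^{k+p+1}+\cdots$, a Leau--Fatou estimate shows that the orbit $\{x_i\}$ stays in $S(\tau,\eta,\delta)$, decreases to $0$, and that the sums $\sum_i|x_i|^{s}$ are comparable to the integrals $\int|x|^{s-k-p-1}\,|dx|$. Solving the recurrence forward gives $u(x)=\Phi_N(x)u(x_N)-\sum_{i=0}^{N-1}\Phi_{i+1}(x)E(x_i,u(x_i))$ with cocycle $\Phi_{i+1}(x)=L(x)^{-1}L(x_1)^{-1}\cdots L(x_i)^{-1}$. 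I would estimate $\Phi_{i+1}$ on $S(\tau,\eta,\delta)$: its diagonal part is controlled by $\exp\!\big(-\sum_{l\le i}\Real(d_{j0}x_l^{k+\nu_j})\big)$, which decays by the half-plane conditions built into $I$ together with $\tau\subset V$, while the $x^pC$ part produces factors comparable to $|x_i/x|^{\Real(\alpha_j)}$ for $\alpha_j\in\Spec(\mathcal C)$. Hence the boundary term $\Phi_N(x)u(x_N)\to0$ and the series converges, so the operator
\[
\mc T(u)(x)=-\sum_{i\ge 0}\Phi_{i+1}(x)\,E\big(x_i,u(x_i)\big)
\]
is well defined, and its fixed points are precisely the invariant graphs with $u(x)\to0$.

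\emph{Self-map, contraction, and the main obstacle.}
The core of the argument, and the step I expect to be hardest, is to show that $\mc T$ maps $\mc H^m_{\eta,\delta}$ into itself and is a contraction for $\delta$ small. For the norm bound I would estimate $\|E(x_i,u(x_i))\|\lesssim|x_i|^{k+p+m}$ (the term independent of $u$ dominates once $\mathbf n(u)\le1$) and combine this with the cocycle estimate and the orbit sums to obtain $\mathbf n(\mc T(u))\le1$. This is exactly where the hypotheses conspire: the decay of $\Phi_{i+1}$ forces both $\tau\subset V$ and the auxiliary conditions defining $I$, and the factors $|x_i/x|^{\Real(\alpha_j)}$ make the estimate closable only for $m$ large. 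The binding constraint $m\ge m_0$ then comes from the derivative bound $\|(\mc T u)'(x)\|\le|x|^{m-p-2}$: differentiating the series loses $p+1$ powers through $\tfrac{d}{dx}x_i$ and the irregularity of the cocycle, and convergence of the differentiated series requires $m-p-2+\min_j\Real(\alpha_j)\ge0$, i.e.\ precisely $m\ge m_0$. For the contraction property I would use that $u\mapsto E(x,u)$ is Lipschitz with constant $O(|x|^{k+p+1})$ on $\mc H^m$, so that after summation $\mathbf n(\mc T(u)-\mc T(v))$ acquires a positive power of $\delta$, yielding a contraction constant $<1$ once $\delta=\delta(m,\eta)$ is small.

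\emph{Conclusions.}
The Banach fixed point theorem then gives a unique $u_{m,\delta}\in\mc H^m_{\eta,\delta}$, and for $\delta'\le\delta$ the forward orbit of any $x\in S(\tau,\eta,\delta')$ remains in $S(\tau,\eta,\delta')$, so the restriction $u_{m,\delta}|_{S(\tau,\eta,\delta')}$ is the fixed point of the corresponding operator; this defines $u_{m,\delta'}$ and proves (b). Part (a) follows since invariance holds by construction, stability is the statement $\vp^{-1}(F^i(\vp(x)))=x_i\to0$, and continuity at the origin follows from $\mathbf n(u)\le1$; the same bound gives order of contact $m-1$ with $\G$, which feeds the asymptoticity in Theorem~\ref{th:main-technical}. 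Finally, for (c) any competing curve $\psi(x)=(x,v(x))$ with $v|_{S(\tau,\eta,\delta')}\in\mc H^m_{\eta,\delta'}$ satisfies the same invariance recurrence; the decay of the cocycle forces its boundary term to vanish, so $v$ is itself a fixed point of $\mc T$ and uniqueness yields $v\equiv u_{m,\delta}$ on the overlap.
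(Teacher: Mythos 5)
Your construction is, in substance, the paper's own (which follows Hakim): the same Banach space $\mc H^m_{\eta,\delta}$, the same invariance equation $u(F_0(x,u(x)))=F_1(x,u(x))$, and an operator that sums the nonlinear remainder along the forward orbit against a linear cocycle, whose fixed points are exactly the invariant graphs. The only real variant is the choice of cocycle: you use the discrete product $\Phi_{i+1}(x)=L(x)^{-1}L(x_1)^{-1}\cdots L(x_i)^{-1}$ with $L(x)=\Id+x^k(D(x)+x^pC)$, while the paper weights by $E(x_0)E(x_j)^{-1}$, where $E(x)=\wh{E}(x)x^{-\mc C}$ is a fundamental solution of the model system $x^{p+1}Y'=-(\mc{D}(x)+x^p\mc{C})Y$. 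By the identity (\ref{eq:tildeA}) the two cocycles differ by a convergent product of factors $1+O(|x_l|^{k+p+1})$, so your estimates (diagonal decay from the conditions defining $I$, the factor $(x_i/x)^{\mc C}$, the vanishing boundary term, the role of $m_0$) are the correct analogues of Lemma~\ref{lm:ExEF1} and Lemma~\ref{lm:sum-of-xj}, and the self-map bound and the telescoping identification of fixed points with invariant graphs go through as you describe.

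The genuine gap is in the contraction step. You argue only that $\yy\mapsto E(x,\yy)$ (your remainder) is Lipschitz in $\yy$ with constant $O(|x|^{k+p+1})$, but that controls just the difference of remainders evaluated along a \emph{common} orbit. In your operator, both the base orbit $\{x_i\}$ (via $x_{i+1}=F_0(x_i,u(x_i))$) and the cocycle $\Phi_{i+1}$ depend on $u$ itself; to compare $\mc T u$ and $\mc T v$ you must compare sums taken along two \emph{different} orbits $\{x_i\}$, $\{y_i\}$ with two different cocycles, and this is where nearly all of the work in the paper's proof lies. One first needs the orbit-comparison estimate $|y_j-x_j|\le M|x_0|^m\,\mathbf{n}(v-u)$ (the paper's (\ref{eq:yj-xj}), imported from Hakim), and then a decomposition of $Tu-Tv$ into three pieces in which the cocycle variation is bounded by $\|\wh{E}(x_0)\wh{E}(x_j)^{-1}-\wh{E}(x_0)\wh{E}(y_j)^{-1}\|\le c\,|y_j-x_j|/|x_j|^{p+1}$ and $\|I-y_j^{\mc C}x_j^{-\mc C}\|\le c\,|y_j-x_j|/|x_j|$. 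The first of these is the delicate one when $p\ge1$: the loss of $p+1$ powers is only absorbed because $m\ge p+2$, and the bound itself is not a generic perturbation estimate --- it requires proving inductively, from the saddle-domain condition along the \emph{entire} orbits, that $\Real(r_l(x_0)-r_l(x_j))\le0$ and $\Real(r_l(x_0)-r_l(y_j))\le0$, so that a mean-value bound on $\exp a-\exp b$ can be applied with both exponents confined to a left half-plane. Without this, the assertion that ``$\mathbf{n}(\mc T(u)-\mc T(v))$ acquires a positive power of $\delta$'' is unjustified, since the orbit- and cocycle-variation terms (which you never estimate) are precisely the ones that could destroy contraction; and existence, uniqueness, and parts (a)--(c) all rest on that fixed point.
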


\noindent Before the proof of
Proposition~\ref{pro:main-technical-m}, let us see how it permits to prove Theorem~\ref{th:main-technical}. Apply
Proposition~\ref{pro:main-technical-m} to $m'>m\geq m_0$ and a
fixed constant $\eta_0\in I$. We obtain two parabolic curves $
\vp_m:S(\tau,\eta_0,\delta_m)\to\C^n$,
$\vp_{m'}:S(\tau,\eta_0,\delta_{m'})\to\C^n $ of $F$ which we write
in coordinates $(x,\yy_m)$ and $(x,\yy_{m'})$ respectively as $\vp_m(x)=(x,u_m(x))$, with
$u_m\in\mc{H}^m_{\eta_0,\delta_m}$, and $\vp_{m'}(x)=(x,u_{m'}(x))$, with $u_{m'}\in\mc{H}^{m'}_{\eta_0,\delta_{m'}}$. Let $\rho=\min\{\delta_m,\delta_{m'}\}$ and write the restriction
of $\vp_{m'}$ to $S(\tau,\eta_0,\rho)$ in the coordinates $(x,\yy_m)$
as $\vp_{m'}(x)=(x,v(x))$. Then we have that
$v(x)=P(x)+u_{m'}(x)$, where
$P(x)=J_{m'+p-1}\bar{\g}(x)-J_{m+p-1}\bar{\g}(x)$, a vector of
polynomials of order at least $m+p$ and of degree at most
$m'+p-1$. We obtain that the restriction of $v$ to
$S(\tau,\eta_0,\rho')$ belongs to $\mc{H}^m_{\eta_0,\rho'}$ for some
$0<\rho'\leq\rho$ and, by Proposition~\ref{pro:main-technical-m}, (c), we have $v\equiv u_m$ in $S(\tau,\eta_0,\rho')$ (notice that
$\vp_{m'}\mid_{S(\tau,\eta_0,\rho')}$ is again a parabolic curve by
part (b)). Thus $\vp_m$ and $\vp_{m'}$ coincide in their common
domains and we obtain a holomorphic map
$$
\vp:\bigcup_{m\geq m_0}S(\tau,\eta_0,\delta_m)\to\C^n.
$$
We may assume that $\delta_m\leq\delta_{m_0}$ for any $m\geq m_0$ and thus $\varphi$ is defined in $S(\tau,\eta_0,\delta_0)$ where $\delta_0=\delta_{m_0}$. By construction, $\varphi$ is a parabolic curve of $F$ asymptotic to $\G$, proving part
(i) of Theorem~\ref{th:main-technical}. Part (ii) follows from
properties (b) and (c) of Proposition~\ref{pro:main-technical-m}
stated for $\eta\leq\eta_0$. Finally, let us show property (iii)
of Theorem~\ref{th:main-technical}. The map $
\bar{\psi}_{m_0}(x)=\bar{\psi}(x)-J_{m_0+p-1}\bar{\g}(x)$ has the formal series
$\bar{\g}(x)-J_{m_0+p-1}\bar{\g}(x)$ as asymptotic expansion on $W$ at $0$. Since this series has order at least $m_0+p$, we deduce that
there exist some $\eta,\delta>0$ such that
$S(\tau,\eta,\delta)\subset W$ and the restriction
$\bar{\psi}_{m_0}\mid_{S(\tau,\eta,\delta)}$ belongs to
$\mc{H}^{m_0}_{\eta,\delta}$. On the other hand,
$(x,\bar{\psi}_{m_0}(x))$ is the expression of $\psi$ in the
coordinates $(x,\yy_{m_0})$ and hence, by
Proposition~\ref{pro:main-technical-m}, (b) and (c), up to
considering smaller constants $\eta,\delta$, we obtain that $\psi$
coincides with $\vp$ on $S(\tau,\eta,\delta)$ and thus on $W\cap
S(\tau,\eta_0,\delta_0)$.

\strut

The rest of the paragraph is devoted to the proof of
Proposition~\ref{pro:main-technical-m}, divided in several steps.
We fix some $m\geq m_0$ and $\eta\in I$ and assume that $F$ is
written as in (\ref{eq:F-m}) in the coordinates $(x,\yy_m)$. For
simplicity, from now on we will drop the indexes ``$m,\eta$''. In
particular $\yy=\yy_m$, $\mc{B}_\delta=\mc{B}^m_{\eta,\delta}$,
$\mc{H}_\delta=\mc{H}^m_{\eta,\delta}$ and $
S_\delta=S(\tau,\eta,\delta). $

 Denote by $ f(x,\yy)=x\circ
F,\;\overline{F}(x,\yy)=\yy\circ F $ the series defined in
(\ref{eq:F-m}) and let $\eps>0$ be small enough so that these
series are convergent for any $(x,\yy)$ with $|x|\leq\eps$,
$\|\yy\|\leq\eps^{m-1}$. We look first for an element
$u\in\mc{H}_{\delta}$, with $\delta$ sufficiently small, so that
\begin{equation}\label{eq:u-F1}
u(f(x,u(x)))=\overline{F}(x,u(x))
\end{equation}
Such an $u$ will be constructed as a fixed point of a contracting
map in $\mc{H}_\delta$, in an analogous way to what is done in
Hakim's work \cite{Hak} for the Briot-Bouquet case.

\strut

{\em Some technical results.-} Let $D(x)+x^pC$ and
$\mathcal{D}(x)+x^p\mathcal{C}$ be respectively the
principal linear parts of the diffeomorphism $F$ and of its
infinitesimal generator $X=\log F$ in the RS-coordinates $(x,\yy)$
(with the notations of (\ref{eq:F-m}) and (\ref{eq:X-RS-form})).
Recall that they do not depend on $m$. We have that
\begin{equation}\label{eq:tildeA}
I+x^k(D(x)+x^pC)=J_{k+p}\left(\exp(x^k(\mathcal{D}(x)+x^p\mathcal{C}))\right).
\end{equation}

Define
$$
\wh{E}(x)=\exp\left( -\int\frac{\mathcal{D}(x)}{x^{p+1}}dx
\right);\;\;E(x)=\wh{E}(x)x^{-\mathcal{C}}.
$$

Observe that $\wh{E}(x)$ is a fundamental solution of the linear
system of ODEs $x^{p+1}Y'=-\mathcal{D}(x)Y$, since $\mc{D}(x)$ is
diagonal, whereas $E(x)$ is a fundamental solution of
$x^{p+1}Y'=-(\mc{D}(x)+x^p\mc{C})Y$, since $\mc{D}(x)$ commutes with
$\mc{C}$. The matrix $\wh{E}(x)$ has an essential singularity at
$x=0$ when $p\geq 1$ and $E(x)$ is a multivalued function of
$x\in\C^*$. They have the interesting properties stated
in the following lemma, inspired by Lemma 3.1 in \cite{Rol-S-S}:
\begin{lemma}\label{lm:ExEF1}
 For any $(x,\yy)$ sufficiently small and $x\neq 0$,
we have
\begin{equation*}
\begin{split}
    \wh{E}(x)\wh{E}(f(x,\yy))^{-1}&=\exp(-x^{k}\mc{D}(x))+O(x^{k+p+1},x^k\|\yy\|)\\
    E(x)E(f(x,\yy))^{-1}&=\exp(-x^{k}(\mc{D}(x)+x^p\mc{C}))+O(x^{k+p+1},x^k\|\yy\|)
\end{split}
\end{equation*}
\end{lemma}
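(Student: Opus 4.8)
The plan is to prove Lemma~\ref{lm:ExEF1} by direct Taylor expansion, exploiting that $f(x,\yy)$ differs from $x$ by a quantity of order $O(x^{k+p+1})$ in the $x$-direction (with error terms controlled by the expression~\eqref{eq:F-m}). The key observation is that $\wh E$ and $E$ are defined as solutions of linear systems whose logarithmic derivatives are explicit: we have $\wh E'(x)=-x^{-p-1}\mc D(x)\wh E(x)$ and $E'(x)=-x^{-p-1}(\mc D(x)+x^p\mc C)E(x)$. Since both $\wh E$ and $E$ are products of (possibly multivalued) elementary functions, the quotients $\wh E(x)\wh E(f)^{-1}$ and $E(x)E(f)^{-1}$ can be analyzed by expanding along the path from $f(x,\yy)$ back to $x$.

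\medskip

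First I would treat the diagonal case $\wh E$, which is cleaner because $\mc D(x)$ is diagonal and hence $\wh E(x)=\diag(\exp(-\int d_j(x)x^{-p-1}dx))$ splits into scalar factors. Writing $\Phi(x)=-\int \mc D(x)x^{-p-1}dx$ so that $\wh E(x)=\exp(\Phi(x))$ and $\wh E(x)\wh E(f)^{-1}=\exp(\Phi(x)-\Phi(f))$, the exponent becomes
\begin{equation*}
\Phi(x)-\Phi(f)=\int_f^x \frac{\mc D(t)}{t^{p+1}}\,dt.
\end{equation*}
Now set $f=f(x,\yy)=x+R$, where by~\eqref{eq:F-m} we have $R=O(x^{k+p+1}\|\yy\|,x^{k+2p+2})$; since we work with $\|\yy\|=O(|x|^{m-1})$ and $m\ge p+2$, in any case $R=O(x^{k+p+1},x^k\|\yy\|)\cdot x^{p+1}$, and more precisely $R/x^{p+1}=O(x^{k+p+1},x^k\|\yy\|)$ after we factor appropriately. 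The integral over the short segment from $x$ to $x+R$ is approximated by $\mc D(x)x^{-p-1}R$ plus a remainder controlled by the derivative of the integrand times $R^2$; the leading term is exactly $\mc D(x)x^{-p-1}R = x^k\mc D(x)(1+\cdots)$ once $R$'s leading part $-x^{k+p+1}$ (from $f=x-x^{k+p+1}+\cdots$) is inserted. Exponentiating and absorbing the higher-order contributions into the error term $O(x^{k+p+1},x^k\|\yy\|)$ yields the first identity $\wh E(x)\wh E(f)^{-1}=\exp(-x^k\mc D(x))+O(x^{k+p+1},x^k\|\yy\|)$.

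\medskip

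For the second identity involving $E(x)=\wh E(x)x^{-\mc C}$, I would write $E(x)E(f)^{-1}=\wh E(x)\,x^{-\mc C}f^{\mc C}\,\wh E(f)^{-1}$ and handle the factor $x^{-\mc C}f^{\mc C}=(f/x)^{\mc C}$. Since $f/x=1-x^{k+p}+O(x^{k+p}\|\yy\|,x^{k+2p+1})$, we get $(f/x)^{\mc C}=\exp(\mc C\log(f/x))=\exp(-x^{k+p}\mc C+O(x^{k+p+1},x^k\|\yy\|))$. Crucially, $\mc D(x)$ commutes with $\mc C$ (this is part~(ii) of Definition~\ref{def:RS-form}, carried over to the generator in~\eqref{eq:X-RS-form}), so the two exponential factors combine additively: $\exp(-x^k\mc D(x))\exp(-x^{k+p}\mc C)=\exp(-x^k(\mc D(x)+x^p\mc C))$, which is precisely the claimed leading term. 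The errors again collapse into $O(x^{k+p+1},x^k\|\yy\|)$.

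\medskip

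\textbf{The main obstacle} I anticipate is bookkeeping the error estimates uniformly when $p\ge 1$, because then $\wh E(x)$ has an essential singularity at $x=0$ and $E(x)$ is multivalued, so the quotients must be estimated on a sector where $\wh E$ is controlled. The saddle-domain hypothesis is exactly what keeps $\exp(-x^k\mc D(x))$ and the error-to-main-term ratios bounded there; thus I would carry out all estimates restricted to a closed subsector of $V$ (as the statement "for $(x,\yy)$ sufficiently small" implicitly permits), where $\Real(-d_{j0}/(\lambda x^{p-\nu_j}))>0$ guarantees that the relevant exponentials do not blow up. The delicate point is verifying that the second-order remainder in the integral expansion, which formally carries a factor $x^{-p-1}$ from the integrand's derivative, is genuinely dominated by $R^2$ and hence falls into the stated error class; this requires using that $R=O(x^{k+2p+2},x^{k+p+1}\|\yy\|)$ has the extra $x^{p+1}$ needed to cancel the singular weight. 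Once this single estimate is secured, both identities follow by the commutativity argument above.
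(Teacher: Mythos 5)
Your core computation is correct and is essentially the paper's own proof: there too, the quotient $Z(x)Z(f)^{-1}$ (for $Z=\wh{E}$ or $E$) is written as $\exp\bigl(\int_f^x B(t)t^{-p-1}\,dt\bigr)$, using that $\mc{D}(x)$ is diagonal and commutes with $\mc{C}$, and the integral is then expanded to first order about $t=x$. The paper organizes the bookkeeping with a slicker device: substituting $t=x+x^{p+1}z$, the integrand becomes $B(x+x^{p+1}t)/(1+x^{p}t)^{p+1}$, which is \emph{analytic} in $(x,t)$, yielding the exact formula $Z(x)Z(x+x^{p+1}z)^{-1}=\exp(zB(x))+x^{p}z^{2}\Lambda(x,z)$ with $\Lambda$ analytic; plugging in $z=(f-x)/x^{p+1}=-x^{k}+O(x^{k}\|\yy\|,x^{k+p+1})$ and using $2k+p\geq k+p+1$ (i.e. $k\geq1$) gives both identities at once. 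The paper also treats $E$ directly as a fundamental solution of $x^{p+1}Y'=-(\mc{D}(x)+x^{p}\mc{C})Y$ rather than factoring out $(f/x)^{\mc{C}}$ as you do; your variant is equally valid because $\wh{E}$ commutes with $\mc{C}$.

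The one genuine flaw is your final paragraph. No sector restriction is needed, and imposing one proves a statement strictly weaker than the lemma, which is asserted (and later used with constants uniform over sectors whose closures may touch $\partial V$) for \emph{all} sufficiently small $x\neq 0$. The saddle domain is irrelevant here: it controls $\wh{E}(x)$ itself, but $\wh{E}(x)$ alone never appears in the lemma --- only the quotient $\wh{E}(x)\wh{E}(f)^{-1}$ does, and this equals $\exp$ of an integral over a path of length $O(|x|^{k+p+1})$ on which the integrand is $O(|x|^{-p-1})$, hence it is $I+O(x^{k})$ uniformly in every direction of approach. Your claim that the saddle domain is what keeps $\exp(-x^{k}\mc{D}(x))$ bounded is also incorrect: since $k\geq1$ and $\mc{D}$ is a polynomial matrix, $x^{k}\mc{D}(x)\to0$ as $x\to0$ along any direction, so this factor tends to $I$ unconditionally. (The saddle-domain hypothesis enters only later, in Lemma~\ref{lm:sum-of-xj} and Proposition~\ref{pro:contraction-map}, via the sign condition $\Real(d_{j0}x^{k+\nu_j})>0$ along orbits.) Fortunately your actual estimates never use the restriction, so the fix is simply to delete it; while doing so, also correct the two places where you write $R=O(x^{k+p+1}\|\yy\|,x^{k+2p+2})$, which omits the dominant term $-x^{k+p+1}$ of $R=f-x$ that you correctly reinstate in the key step.
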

\begin{proof}
Let $Z(x)$ be either $E(x)$ or $\wh{E}(x)$, a fundamental solution
of the system $x^{p+1}Y'=-B(x)Y$, where $B(x)$ is either
$\mc{D}(x)+x^p\mc{C}$ or $\mc{D}(x)$, correspondingly. Put
$\Omega(x,z)=Z(x+x^{p+1}z)$. If we fix $x$ and consider $\Omega$
as a function of $z$ then it satisfies the (regular) system
$$\frac{\partial\Omega}{\partial
z}=\frac{-B(x+x^{p+1}z)}{(1+x^{p}z)^{p+1}}\Omega(x,z).$$
On the other hand, we have $\Omega(x,0)=Z(x)$ and thus
$$
\Omega(x,z)=\exp\left(-\int_0^z\frac{B(x+x^{p+1}t)}{(1+x^{p}t)^{p+1}}dt
\right)Z(x)
$$
(using again that $\mc{D}(x)$ is diagonal and commutes with
$\mc{C}$). Hence
$$
Z(x)Z(x+x^{p+1}z)^{-1}=Z(x)\Omega(x,z)^{-1}=\exp\left(
\int_0^z\frac{B(x+x^{p+1}t)}{(1+x^{p}t)^{p+1}}dt \right).
$$
The integrand in the equation above is an analytic function of
$(x,t)$ and may be written as
$\frac{B(x+x^{p+1}t)}{(1+x^{p}t)^{p+1}}=B(x)+O(x^{p}t)$. By
integration, we obtain, for any $x\neq 0$ and $z$ sufficiently
small,
$$
Z(x)Z(x+x^{p+1}z)^{-1}=\exp(zB(x))+x^{p}z^2\Lambda(x,z),
$$
where $\Lambda$ is analytic at the origin. The
result follows using the expression of $f(x,\yy)$.
\end{proof}

\strut

{\em Dynamics in dimension one.-} Given $u\in\mc B_{\eps}$ with
${\bf n}(u)\leq 1$, we consider the holomorphic map
 $f_{u}:S_{\eps}\to\C$ defined by
 $
 f_{u}(x)=f(x,u(x)).
 $

Notice (since $m\geq p+2$) that $|f_u(x)-x+x^{k+p+1}|\leq
B|x|^{k+2p+2}$ for $x\in S_\eps$, with some $B>0$ independent of
$u$. Then, taking into account that $\eta<2\pi/(k+p)$ and arguing
as in the proof of Leau and Fatou flower theorem in \cite{Lor}, we
can prove the following result.

\begin{proposition}\label{fatoufu}
There exists a constant $\delta$ with $0<\delta\leq\eps$ such
that, for every $0<\delta'\le\delta$ and every $u\in\mc
H_{\delta'}$ we have $f_{u}(S_{\delta'})\subset S_{\delta'}$.
Moreover, there is a constant $c>0$ such that, if
$x_0\in S_{\delta}$ and we denote by $x_j=f_{u}(x_{j-1})$, for
${j\geq 1}$, the orbit of $x_0$ by $f_{u}$, then for all $j\in\N$
we have
$$
x_j^{k+p}\sim \frac1{(k+p)j}\quad\mbox{ and }\quad
|x_j|^{k+p}\le c\frac{|x_0|^{k+p}}{1+j|x_0|^{k+p}}
$$
(where $w_j\sim z_j$ means that $\lim_{j\to\infty} w_j/z_j=1$).

\end{proposition}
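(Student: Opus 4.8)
The plan is to reduce the statement to the classical Leau--Fatou flower theorem for the one-dimensional germ $f_u$, keeping careful track of the uniformity in $u\in\mc H_{\delta'}$, and to extract the quantitative orbit estimates via a Fatou coordinate. Set $q=k+p$. The crucial input is the estimate stated just before the proposition: since $m\ge p+2$ forces $\|u(x)\|\le|x|^{m-1}\le|x|^{p+1}$ for $u\in\mc H_{\delta'}$, substitution into (\ref{eq:F-m}) gives, \emph{uniformly in $u$},
\[
f_u(x)=x-x^{q+1}+R_u(x),\qquad |R_u(x)|\le B\,|x|^{q+p+2}\quad(x\in S_{\eps}),
\]
with $B$ independent of $u$. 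Thus every $f_u$ is a perturbation of the model parabolic map $x\mapsto x-x^{q+1}$, and $\tau$ is an attracting direction because $\tau^{q}=-\lambda=1$, so $x^{q}\in\R_{+}$ along $\tau$ and $|f_u(x)|<|x|$ there.

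First I would pass to the Fatou coordinate $w=\tfrac{1}{q\,x^{q}}$, choosing the branch of $x^{q}$ that is positive along $\tau$, so that $S_{\delta'}=S(\tau,\eta,\delta')$ is carried to the $w$-region $\{\,|\arg w|<q\eta/2,\ |w|>R_{\delta'}\,\}$ with $R_{\delta'}=1/(q\,\delta'^{q})$. A direct expansion using $R_u(x)=O(|x|^{q+p+2})$ yields $(x')^{-q}=x^{-q}+q+O(|x|^{p+1})$, hence
\[
w'=w+1+\eps_u(w),\qquad |\eps_u(w)|\le C\,|w|^{-(p+1)/q},
\]
with $C$ uniform in $u$. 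So in the $w$-picture $f_u$ becomes a near-translation by $1$ whose error decays at $\infty$, uniformly over $u\in\mc H_{\delta'}$ and over $\delta'\le\delta$.

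Next I would establish invariance, $f_u(S_{\delta'})\subset S_{\delta'}$. In the $w$-coordinate this is the assertion that the near-translation preserves the angular sector $\{|\arg w|<q\eta/2\}$ and drives points toward $w=\infty$ (i.e. $x\to0$). The opening hypothesis $\eta<2\pi/(k+p)$ gives angular width $q\eta<2\pi$ in $w$, which is exactly what makes the real translation by $+1$ push the argument \emph{toward} $\R_{+}$ (the image of $\tau$): the leading angular increment is $-\Im(x^{q})=-|x|^{q}\sin(q\phi)$ with $\phi=\arg x-\arg\tau\in(-\eta/2,\eta/2)$, which points inward on both edges since $\sin(q\eta/2)>0$, and dominates the $O(|x|^{q+1})$ correction once $\delta$ is small. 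Meanwhile $\Real(w')\ge\Real(w)+1-|\eps_u|\ge\Real(w)+\tfrac12$ for $R_{\delta'}$ large, so the real part grows and orbits escape to infinity in $w$. I expect this to be the main obstacle: controlling the image near the two boundary rays of the sector for openings close to $2\pi/(k+p)$ is precisely the delicate geometric point of the flower theorem, and here it must be carried out \emph{uniformly} in $u$ and simultaneously for all $\delta'\le\delta$. I would handle it by following the petal construction in \cite{Lor} for the model map and transferring it to $f_u$ through the uniform estimate on $R_u$, shrinking $\delta$ once and for all so that the error terms never overcome the inward angular drift.

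Finally, the orbit estimates follow by iterating the near-translation. Writing $w_j=w_0+j+\sum_{i<j}\eps_u(w_i)$ and using $|w_i|\ge c'(|w_0|+i)$ (from the linear growth of the real part together with the angular confinement $|\arg w_i|<q\eta/2<\pi$), the error sum is bounded by $\sum_i (|w_0|+i)^{-(p+1)/q}$, which is $o(j)$ because $(p+1)/q\le1$ (with a logarithmic bound when $k=1$). Hence $w_j\sim j$, giving $x_j^{-q}=q\,w_j\sim qj$ and therefore $x_j^{k+p}\sim\frac{1}{(k+p)j}$. The same lower bound $|w_j|\ge c'(|w_0|+j)$ with $|w_0|=1/((k+p)|x_0|^{k+p})$ yields
\[
|x_j|^{k+p}=\frac{1}{(k+p)\,|w_j|}\le c\,\frac{|x_0|^{k+p}}{1+j\,|x_0|^{k+p}}
\]
for a uniform constant $c>0$, which is the claimed inequality. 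All constants are independent of $u$ and of $\delta'\le\delta$, so the conclusion holds for every $u\in\mc H_{\delta'}$ simultaneously.
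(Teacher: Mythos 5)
Your overall strategy coincides with the paper's: the paper's proof consists exactly of the uniform estimate $|f_u(x)-x+x^{k+p+1}|\le B|x|^{k+2p+2}$ for all $u\in\mc H_{\delta'}$ (which you derive correctly from ${\bf n}(u)\le 1$ and $m\ge p+2$) followed by an appeal to the Leau--Fatou flower argument of \cite{Lor} under the hypothesis $\eta<2\pi/(k+p)$. Your passage to the Fatou coordinate $w=1/(qx^q)$, $q=k+p$, and the uniform near-translation form $w'=w+1+\eps_u(w)$ with $|\eps_u(w)|\le C|w|^{-(p+1)/q}$ are correct, as is the deduction $x_j^q\sim 1/(qj)$ \emph{once} invariance and the lower bound $|w_j|\ge c'(|w_0|+j)$ are available.

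The gap is in how you obtain those two facts, i.e.\ precisely in the step you flag as delicate, and your proposed resolution does not close it. Growth of the real part, $\Real(w')\ge\Real(w)+\frac12$, does not preserve the region $\{|w|>R_{\delta'}\}\cap\{|\arg w|<q\eta/2\}$: whenever $\Real w<-\frac12$ one has $|w+1|<|w|$, and such points exist in the region exactly when $q\eta>\pi$. This case cannot be excluded: in the Briot--Bouquet case $I=(0,2\pi/k)$, and in general the conditions defining $I$ do not force $q\eta\le\pi$. Back in the $x$-variable the failure is visible already for the model map: if $\Real(x^q)<0$ then $|x-x^{q+1}|^2=|x|^2\bigl(1-2\Real(x^q)+|x|^{2q}\bigr)>|x|^2$, so points of $S_{\delta'}$ with such arguments and $|x|$ sufficiently close to $\delta'$ are sent strictly outside $S_{\delta'}$; no shrinking of $\delta$ can repair this, because the obstruction comes from the leading term $-x^{q+1}$, not from the error $R_u$. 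For the same reason your inequality $|w_i|\ge c'(|w_0|+i)$, derived ``from the linear growth of the real part together with the angular confinement,'' fails for orbits with $\Real w_0\ll 0$ and $|\Im w_0|$ small: such an orbit drifts toward the origin of the $w$-plane (i.e.\ $|x_i|$ increases) for roughly $|w_0|$ steps before escaping, so no uniform constant $c$ as in the statement can be extracted this way.

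What the citation of \cite{Lor} actually supplies, and what any complete proof needs in the range $\pi<q\eta<2\pi$, is the petal geometry: one runs the whole argument on regions which in the $w$-plane are bent half-planes $\{\Real w>R-\kappa|\Im w|\}$ (equivalently, $w$-sectors based at a point $R>0$ of the positive real axis), taking first $\kappa$ large and then $R$ large so that $|\eps_u|$ is small enough. These regions \emph{are} invariant under $w\mapsto w+1+\eps_u(w)$, uniformly in $u$, and on them the bound $|w_j|\ge c'(|w_0|+j)$ does hold, because $|\Im w|$ controls $|\Real w|$ along their boundary; the conclusion for the sectors $S_{\delta'}$ must then be deduced from, or reformulated in terms of, these petals. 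As written, your argument is complete only when $q\eta<\pi$: there $\Real(x^q)\ge|x|^q\cos(q\eta/2)>0$ on the whole sector, so $|f_u(x)|<|x|$, and both the invariance and the orbit estimates follow at once, uniformly in $u$.
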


We also need the following lemma.
\begin{lemma}\label{lm:sum-of-xj}Put $\sigma=\min\{\Real(\alpha_j):\alpha_j\in\Spec(\mathcal{C})\}$. With the same notations as in Proposition~\ref{fatoufu}, and if
$\delta$ is sufficiently small, we have:
\begin{enumerate}[(i)]
\item For any real number $s>k+p$, there exists a constant $K_s>0$ such that $
\sum_{j\geq 0}|x_j|^s\leq K_s|x_0|^{s-k-p}$.
\item There exist constants $M_1, M_2>0$ such that
$\|(x_j/x_0)^{\mathcal{C}}\|\le M_1|x_j/x_0|^{\sigma-1/2}$ and
$\|\wh{E}(x_0)\wh{E}(x_j)^{-1}\|\leq M_2$ for any $x_0\in
S_\delta$ and any $j\geq 0$.
\end{enumerate}
\end{lemma}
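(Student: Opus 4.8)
The plan is to read off all three bounds from the one-dimensional asymptotics in Proposition~\ref{fatoufu}, combined with the two sign conditions built into the index set $I$. Throughout, $\delta$ is chosen small enough that the orbit $\{x_j\}$ stays in $S_\delta\subset V$ and that all estimates are uniform in $x_0\in S_\delta$, in $j\geq 0$, and in $u\in\mc H_\delta$ (the last because ${\bf n}(u)\le 1$ controls the error terms in \eqref{eq:F-m}). For \emph{part (i)}, set $a=|x_0|^{k+p}$ and $q=s/(k+p)>1$. Raising the bound $|x_j|^{k+p}\le c\,|x_0|^{k+p}/(1+j|x_0|^{k+p})$ to the power $q$ gives $|x_j|^s\le c^q a^q(1+ja)^{-q}$. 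Comparing the series $\sum_{j\ge0}(1+ja)^{-q}$ with the integral $\int_0^\infty(1+ta)^{-q}\,dt=1/(a(q-1))$ — which is exactly where the hypothesis $s>k+p$ enters, to force $q>1$ and convergence — yields $\sum_{j\ge0}(1+ja)^{-q}\le 1+1/(a(q-1))$. Multiplying by $c^qa^q$ and using $a\le\delta^{k+p}$ to absorb the $a^q$ term into $a^{q-1}=|x_0|^{s-k-p}$ gives the claim with $K_s=c^q(\delta^{k+p}+1/(q-1))$.

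For the \emph{first inequality of part (ii)}, note that $x_0$ and $x_j$ both lie in the sector $S(\tau,\eta,\delta)$ of opening $\eta<2\pi/(k+p)\le 2\pi$, so $w:=x_j/x_0$ satisfies $|\arg w|<\eta$, while $|w|\le c^{1/(k+p)}$. Writing $w^{\mc C}=\exp(\mc C\log w)$ and passing to a basis triangularizing $\mc C$, each entry is a polynomial of degree $<n$ in $\log w$ times $e^{\alpha_j\log w}$, whose modulus is $|w|^{\Real\alpha_j}$ up to a bounded factor (coming from the bounded argument). For $|w|\le 1$ the modulus factors are largest at $\Real\alpha_j=\sigma$, whence $\|w^{\mc C}\|\le C(1+|\log|w||)^{n-1}|w|^{\sigma}$; since $(1+t)^{n-1}\le M e^{t/2}$, the logarithmic factor is absorbed into $|w|^{-1/2}$ to give $\|w^{\mc C}\|\le M_1|w|^{\sigma-1/2}$, the remaining range $1\le|w|\le c^{1/(k+p)}$ being a compact annular sector on which continuity provides the bound.

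The \emph{second inequality of part (ii)} is the heart of the matter. Since $\mc D(x)$ is diagonal with $j$-th entry $\mc d_j(x)=d_{j0}x^{\nu_j}+\cdots$ of degree $\le p-1$, the matrix $\wh E(x)$ is diagonal with entries $\wh E_j(x)=e^{g_j(x)}$, where $g_j(x)=-\int \mc d_j(x)\,x^{-p-1}\,dx$ has only negative exponents $\le -2$ (so no logarithm appears) and leading term $\tfrac{d_{j0}}{p-\nu_j}x^{\nu_j-p}$. Hence $\wh E(x_0)\wh E(x_j)^{-1}$ is diagonal with entries of modulus $e^{\Phi_j(x_0)-\Phi_j(x_j)}$, where $\Phi_j:=\Real g_j$, and it suffices to show $\Phi_j$ is nondecreasing along the orbit. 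I would estimate one step by Taylor expansion: using $x_{l+1}-x_l=-x_l^{k+p+1}(1+O(x_l))$ from \eqref{eq:F-m} and $g_j'(x)=-\mc d_j(x)x^{-p-1}$, the first-order term of $\Phi_j(x_{l+1})-\Phi_j(x_l)$ is $\Real(\mc d_j(x_l)x_l^{k})=\Real(d_{j0}x_l^{k+\nu_j})+O(|x_l|^{k+\nu_j+1})$, which is strictly positive precisely by the \emph{second} defining condition of $I$, namely $\Real(d_{j0}x^{k+\nu_j})>0$ on $S(\tau,\eta,\infty)$; on a closed subsector this gives a lower bound $\kappa|x_l|^{k+\nu_j}$ with $\kappa>0$. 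The second-order Taylor term is of order $|x_l|^{\nu_j+2k+p}$, smaller by a factor $|x_l|^{k+p}$, so for $\delta$ small the increment is $\ge\tfrac{\kappa}{2}|x_l|^{k+\nu_j}\ge 0$. Summing over $l=0,\dots,j-1$ gives $\Phi_j(x_j)\ge\Phi_j(x_0)$, so each diagonal entry of $\wh E(x_0)\wh E(x_j)^{-1}$ has modulus $\le 1$ and $M_2=1$ works.

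The hard part is this last step: matching the defining inequality of $I$ to the monotonicity of $\Phi_j$ along the \emph{discrete} orbit, and verifying that the discretization (second-order) errors are dominated uniformly in $l$, $x_0$ and $u\in\mc H_\delta$. Everything else is routine bookkeeping layered on top of Proposition~\ref{fatoufu}.
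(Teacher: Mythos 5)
Your argument is correct in all three parts, and for (i) and the first inequality of (ii) it follows essentially the paper's own route: integral comparison (the paper simply cites Hakim) for (i), and reduction of $\mc{C}$ to Jordan/triangular form with the polynomial-in-$\log$ factor absorbed into $|w|^{-1/2}$ for the bound on $\|w^{\mc{C}}\|$ (the paper handles the range $1\le|w|\le c^{1/(k+p)}$ by rescaling $z=c^{-1/(k+p)}x_j/x_0$ rather than by compactness, an immaterial difference). Where you genuinely diverge is the second inequality of (ii), which you rightly identify as the heart of the lemma. The paper telescopes $\wh{E}(x_0)\wh{E}(x_j)^{-1}=\prod_{l=0}^{j-1}\wh{E}(x_l)\wh{E}(x_{l+1})^{-1}$, applies Lemma~\ref{lm:ExEF1} to write each factor as $\exp(-x_l^k\mc{D}(x_l))+\theta_u(x_l)$ with $\|\theta_u(x_l)\|\le K|x_l|^{k+p+1}$, uses the sign condition built into $I$ to bound the exponential factor by $1$, and concludes by convergence of the infinite product $\prod_{l\ge0}(1+K|x_l|^{k+p+1})$, via $x_l^{k+p}\sim 1/((k+p)l)$. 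You instead compute $\wh{E}=\exp(R)$ with $R$ an explicit diagonal Laurent polynomial (no logarithmic term, as you correctly note, because the entries of $\mc{D}(x)$ have degree at most $p-1$) and prove monotonicity of the real parts of its entries along the discrete orbit by a one-step Taylor estimate with controlled second-order error. This is a legitimate alternative: it bypasses both Lemma~\ref{lm:ExEF1} and the infinite-product criterion, and it yields the sharper constant $M_2=1$ (up to the choice of matrix norm). In fact your mechanism is exactly the one the paper itself deploys later, in the proof of Proposition~\ref{pro:contraction-map}, to bound the term $U_2$ (there one shows $\Real(r_l(x_0)-r_l(x_j))\le 0$ by induction along the orbit). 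What the paper's route buys is economy, since Lemma~\ref{lm:ExEF1} is needed anyway to define $H$ and control the operator $T$; what yours buys is self-containedness and the optimal constant.

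One point should be tightened, although the paper glosses over it in exactly the same way, so I do not count it as a gap of yours: the uniform constant $\kappa>0$. The definition of $I$ only places the \emph{open} sector $S(\tau,\eta,\infty)$ inside the \emph{open} region $\{\Real(d_{j0}x^{k+\nu_j})>0\}$; if $\eta$ happens to be the right endpoint of $I$, there is no uniform lower bound for $\Real\bigl(d_{j0}e^{i(k+\nu_j)\arg x}\bigr)$ over $x\in S_\delta$, and then neither your lower bound $\kappa|x_l|^{k+\nu_j}$ nor the paper's inequality $\|\exp(-x_l^k\mc{D}(x_l))\|\le 1$ survives the $O(|x_l|^{k+\nu_j+1})$ corrections coming from the higher-order terms. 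The fix is harmless and should replace your phrase ``on a closed subsector'' (the orbit a priori lives only in the open sector $S_\delta$): take $\eta$ strictly interior to $I$, so that $\overline{S(\tau,\eta,\infty)}\setminus\{0\}$ is contained in $S(\tau,\eta',\infty)$ for some $\eta'\in I$ and hence in the open region, which gives the uniform $\kappa>0$; this suffices for every use of the lemma in the paper.
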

\begin{proof}
Part (i) is proved without difficulty using the second inequality
in Proposition~\ref{fatoufu} (see \cite{Hak}). For the first
inequality of (ii), assuming $\mc{C}=U+N$ is in Jordan form ($U$
diagonal and $N$ nilpotent), there exists a constant $M$,
depending only on $\mc{C}$, such that
$$
\|z^\mc{C}\|\leq M|z|^{-1/2}\|z^U\|\leq M|z|^{\sigma-1/2},\;\mbox{
for }0<|z|\leq 1.
$$
The result follows putting $z=\frac{1}{c^{1/(k+p)}}\frac{x_j}{x_0}$, where $c$ is the constant
of Proposition~\ref{fatoufu}. Let us prove the
second inequality of (ii). Using Lemma~\ref{lm:ExEF1}, the
definition of $\mc{H}_\delta$ and the fact that $m\geq p+2$, we
may write
$$\wh{E}(x_0)\wh{E}(x_1)^{-1}=\exp(-x_{0}^{k}\mc{D}(x_0))+\theta_u(x_0)$$
where $\|\theta_u(x_0)\|\leq K|x_0|^{k+p+1}$ for any $x_0\in
S_\delta$ and any $u\in\mc{H}_\delta$, with some $K>0$ independent
of $u$. From equation (\ref{eq:tildeA}), we have
$\mathcal{D}(x)=\diag(b_2(x),...,b_n(x))$ where each
$b_i(x)\in\C[x]$ has degree at most $p-1$ and satisfies
$b_i(x)=d_{i0}x^{\nu_i}+O(x^{\nu_i+1})$ if
$d_i\not\equiv 0$ or $b_i\equiv 0$ otherwise. By the definition of the interval
$I$, $\Real(d_{i0}x_l^{k+\nu_i})>0$  for all $l\ge0$ if
$b_i(x)\not\equiv0$ (recall that $x_l\in S_\delta$ for all $l\ge0$). We obtain
$$\|\wh{E}(x_0)\wh{E}(x_j)^{-1}\|\leq \prod_{l=0}^{j-1} (1+K|x_l|^{k+p+1})\leq \prod_{l=0}^{\infty} (1+K|x_l|^{k+p+1})$$
for $\delta$ sufficiently small. The result follows from the fact that $x_j^{k+p}\sim\frac1{(k+p)j}$
and a classical criterium
of convergence of infinite products.
\end{proof}

\strut

{\em The contraction map.-} Define
$H(x,\yy)=\yy-E(x)E(f(x,\yy))^{-1}\overline{F}(x,\yy)\in\C\{x,\yy\}$.
Using Lemma~\ref{lm:ExEF1} and identity
(\ref{eq:tildeA}), we have
\begin{equation}\label{eq:H}
H(x,\yy)=O(x^{k+p+1}\|\yy\|,x^k\|\yy\|^2,x^{k+p+m}).
\end{equation}

\begin{proposition}\label{pro:contraction-map}
If $\delta>0$ is sufficiently small and we put $x_j=f_{u}(x_{j-1})$ for ${j\geq 1}$ for $u\in\mc H_\delta$ and $x_0\in S_\delta$, the series
$$
Tu(x_0)=\sum_{j\ge0}E(x_0)E(x_j)^{-1}H(x_j,u(x_j))
$$
is normally convergent and defines a contracting map $T:u\mapsto
Tu$ from $\mc H_{\delta}$ to itself. Moreover, $u\in\mc{H}_\delta$
is a fixed point of $T$ if and only if $u$ satisfies equation
(\ref{eq:u-F1}).
\end{proposition}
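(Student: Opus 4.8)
The plan is to realize the solution of the functional equation (\ref{eq:u-F1}) as the unique fixed point of $T$ in the complete metric space $(\mc H_\delta,{\bf n})$, so I must check four things: normal convergence of the series defining $Tu$, the inclusion $T(\mc H_\delta)\subset\mc H_\delta$, the contraction property, and the characterization of fixed points. The engine for the first two points is a single pointwise bound on the summands. Writing $E(x)=\wh E(x)x^{-\mc C}$ and using that $\mc D$ commutes with $\mc C$ gives $E(x_0)E(x_j)^{-1}=\wh E(x_0)\wh E(x_j)^{-1}(x_j/x_0)^{\mc C}$, so Lemma~\ref{lm:sum-of-xj}(ii) yields $\|E(x_0)E(x_j)^{-1}\|\le M_1M_2|x_j/x_0|^{\s-1/2}$; on the other hand (\ref{eq:H}), together with ${\bf n}(u)\le1$ and $m\ge p+2$, gives $\|H(x_j,u(x_j))\|\le C|x_j|^{k+p+m}$ with $C$ independent of $u$. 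Multiplying these and summing by Lemma~\ref{lm:sum-of-xj}(i) with $s=\s-1/2+k+p+m$ (which exceeds $k+p$ precisely because $m\ge m_0\ge p+2-\s$) produces $\|Tu(x_0)\|\le M_1M_2CK_s|x_0|^m$. Hence the series converges normally in $\mc B_\delta$, uniformly for $u\in\mc H_\delta$, and ${\bf n}(Tu)\le M_1M_2CK_s\,\delta\le1$ once $\delta$ is small.

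For the second defining condition of $\mc H_\delta$, namely $\|(Tu)'(x)\|\le|x|^{m-p-2}$, I would differentiate the series term by term. By the chain rule $dx_j/dx_0=\prod_{l<j}f_u'(x_l)$, and this product is bounded — indeed $\lesssim|x_j/x_0|^{k+p+1}\le1$, as one sees at once in the translation coordinate $w=x^{-(k+p)}$ where $f_u$ becomes $w\mapsto w+(k+p)+O(w^{-q})$ with $q=(p+1)/(k+p)>0$. Differentiating produces three kinds of terms, involving $E'(x_0)$, $(E^{-1})'(x_j)$ and the derivative of $x\mapsto H(x,u(x))$; each carries at worst an extra factor $|x|^{-(p+1)}$ (the logarithmic derivatives of $E^{\pm1}$ equal $\mp(\mc D(x)+x^p\mc C)/x^{p+1}$) or the derivative bound $\|u'\|\le|x|^{m-p-2}$, and the same summation as above bounds each of them by a constant times $|x_0|^{m-p-1}\le\delta\,|x_0|^{m-p-2}$. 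Thus $T(\mc H_\delta)\subset\mc H_\delta$ for $\delta$ small, the differentiation being justified by the uniform convergence.

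The equivalence with (\ref{eq:u-F1}) I would extract from the self-similarity of $T$. Since the $f_u$-orbit of $x_1$ is $(x_{j+1})_{j\ge0}$, splitting off the $j=0$ term gives the identity $Tu(x_0)=H(x_0,u(x_0))+E(x_0)E(x_1)^{-1}Tu(x_1)$. Combining this with the definition $H(x_0,u(x_0))=u(x_0)-E(x_0)E(x_1)^{-1}\overline F(x_0,u(x_0))$, and with the fact that $\|E(x_0)E(x_N)^{-1}u(x_N)\|\le M_1M_2|x_0|^{1/2-\s}|x_N|^{\s+m-3/2}\to0$ (the exponent $\s+m-3/2\ge p+1/2>0$), a one-step—or, equivalently, a telescoping—computation shows that $u=Tu$ holds if and only if $u(f(x,u(x)))=\overline F(x,u(x))$ for every $x\in S_\delta$, which is exactly (\ref{eq:u-F1}).

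The hard part will be the contraction estimate, and within it the comparison of the two orbits $x_j=f_u^{\,j}(x_0)$ and $\tilde x_j=f_v^{\,j}(x_0)$. A crude Gr\"onwall bound only yields $|x_j-\tilde x_j|\lesssim|x_0|^m\,{\bf n}(u-v)$ uniformly in $j$, which is useless here: because the logarithmic derivative of $E^{-1}$ has a pole of order $p+1$ at the origin, controlling the difference $E(x_j)^{-1}-E(\tilde x_j)^{-1}$ requires the \emph{relative} gap $|x_j-\tilde x_j|/|x_j|^{p+1}$ to be small, whereas the crude bound lets it blow up. I would therefore establish the sharp orbit stability $|x_j-\tilde x_j|\lesssim|x_j|^{p+2}\,{\bf n}(u-v)$ (up to a harmless logarithm) again in the coordinate $w=x^{-(k+p)}$: there $f_u,f_v$ are near-translations that differ by $O(w^{-(m-1)/(k+p)}{\bf n}(u-v))$, and a Gr\"onwall argument — whose multiplicative errors are summable since $q+1>1$ — bounds the gap of the $w$-orbits, the conversion back to $x$ supplying the extra factor $|x_j|^{k+p+1}$; since $m\ge p+2$ and $k\ge1$ this forces the relative gap to tend to $0$. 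With this in hand, the Lipschitz estimates for $E^{-1}$ and for $x\mapsto H(x,u(x))$ (the latter also using the bound $\|u'\|\le|x|^{m-p-2}$ built into $\mc H_\delta$), together with the resummation of Lemma~\ref{lm:sum-of-xj}, bound $\|Tu(x_0)-Tv(x_0)\|$ by a constant times $\delta^{\eps}|x_0|^{m-1}{\bf n}(u-v)$ for some $\eps>0$. Shrinking $\delta$ then gives ${\bf n}(Tu-Tv)\le\lambda\,{\bf n}(u-v)$ with $\lambda<1$, which is the asserted contraction.
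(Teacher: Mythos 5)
Your treatment of normal convergence, of the inclusion $T(\mc{H}_\delta)\subset\mc{H}_\delta$, and of the equivalence between fixed points of $T$ and solutions of (\ref{eq:u-F1}) is correct and essentially identical to the paper's proof. The divergence is in the contraction estimate, and there your proposal has a genuine gap: the object that must be controlled is not $E(x_j)^{-1}-E(y_j)^{-1}$ (where $y_j=f_v^j(x_0)$) but the difference of orbit products $\wh{E}(x_0)\wh{E}(x_j)^{-1}-\wh{E}(x_0)\wh{E}(y_j)^{-1}$, and ``Lipschitz estimates for $E^{-1}$'' do not exist in any usable form. The entries of $\wh{E}(x)^{-1}$ are $\exp(r_l(x))$ with $r_l(x)\sim d_{l0}/((p-\nu_l)x^{p-\nu_l})$, so this function and its derivative grow or decay \emph{exponentially} as $x\to0$ in the sector, with no control by powers of $|x|$; smallness of $|x_j-y_j|/|x_j|^{p+1}$ alone buys nothing, because the mean value inequality for the exponential carries the factor $\max_{\xi\in[a,b]}\|\exp\xi\|$, where $a=R(x_0)-R(x_j)$ and $b=R(x_0)-R(y_j)$ have entries of modulus of order $|x_j|^{-(p-\nu_l)}$. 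The only mechanism that tames this factor is the saddle-domain hypothesis $\tau\subset V$: by induction along \emph{both} orbits, using $\Real(d_{l0}x_j^{k+\nu_l})>0$ (the definition of the interval $I$), one shows that all entries of $a$ and $b$ lie in $\{\Real z\le0\}$; by convexity the segment $[a,b]$ stays there, hence $\max_{\xi\in[a,b]}\|\exp\xi\|\le1$ and $\|\exp a-\exp b\|\le\|a-b\|=\|R(x_j)-R(y_j)\|$. (Equivalently, since the matrices are diagonal one may write $\exp a-\exp b=\exp(a)\,(I-\exp(b-a))$ and invoke Lemma~\ref{lm:sum-of-xj}(ii), which itself rests on the saddle condition.) Your contraction paragraph never invokes $\tau\subset V$ at all; since this is exactly where the hypothesis of the theorem enters the analysis, no sharpening of the orbit-stability bound can substitute for it. A smaller omission of the same kind: the multivalued factor $x^{-\mc{C}}$ produces a separate term $I-y_j^{\mc{C}}x_j^{-\mc{C}}$ (the paper's splitting $Tu-Tv=U_1+U_2+U_3$), which your sketch does not isolate.

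Your stated reason for introducing the sharp orbit-stability bound is also incorrect: the ``crude'' uniform bound $|y_j-x_j|\le M|x_0|^m{\bf n}(v-u)$ is precisely what the paper uses (equation (\ref{eq:yj-xj}), taken from Hakim's Lemma 4.9), and it suffices. The relative gap $|x_j-y_j|/|x_j|^{p+1}$ is indeed allowed to blow up, because in the resummation it is multiplied by $\|H(y_j,v(y_j))\|=O(|x_j|^{k+p+m})$, and $\sigma-\tfrac12+k+m-1>k+p$ thanks to $m\ge m_0\ge p+2-\sigma$; only the summed series must be small, not each factor. Your sharper bound $|x_j-y_j|\lesssim|x_j|^{p+2}{\bf n}(u-v)$ is plausibly provable by the Gr\"onwall argument in the chart $w=x^{-(k+p)}$ and would also close the estimate, so that portion is a legitimate, if unnecessary, alternative — but it solves a non-problem, while the actual difficulty, the exponential factors governed by the saddle domain, is left untouched.
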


\begin{remark} In the Briot-Bouquet case $p=0$ we have from (\ref{eq:tildeA}) that $\mathcal{D}(x)\equiv0$ and $\mc{C}=C$, so the operator $T$ and the space $\mc{H}_{\delta}$
are exactly the ones considered by Hakim in \cite{Hak}.
\end{remark}

\begin{proof}
By (\ref{eq:H}), there exists a constant $K>0$ such that, if
$\delta$ is small enough and $x\in S_\delta$ then
$\|H(x,u(x))\|\leq K|x|^{k+p+m}$ independently of $u\in\mc
H_{\delta}$ (recall that we have assumed that $m\geq p+2$). Thus,
using Lemma~\ref{lm:sum-of-xj} (ii),
\begin{equation}\label{eq:l1}
\|E(x_0)E(x_j)^{-1}H(x_j,u(x_j))\|\leq M_1M_2K\frac{|x_j|^{\sigma-\frac{1}{2}+k+p+m}}{|x_0|^{\sigma-\frac{1}{2}}},
\end{equation}
and using Lemma~\ref{lm:sum-of-xj}, (i), the series $Tu(x_0)$ is
normally convergent for any $x_0\in S_\delta$ and $u\in\mc
H_\delta$, defines a holomorphic map
$Tu\in\mathcal{O}(S_\delta,\C^{n-1})$ and satisfies
$Tu(x)=O(|x|^{m})$. Then, $\|Tu(x)\|\leq|x|^{m-1}$ for $|x|<\delta$,
if $\delta$ is small enough. We compute the derivative:
$$\begin{array}{rcl}
(Tu)'(x_0)& = & \displaystyle
\sum_{j\ge0}E'(x_0)E(x_j)^{-1}H(x_j,u(x_j))\\[5pt]
&-&
\displaystyle\sum_{j\ge0}E(x_0)E(x_j)^{-1}x'_jE'(x_j)E(x_j)^{-1}H(x_j,u(x_j))\\[5pt]
&+& \displaystyle \sum_{j\ge0}E(x_0)E(x_j)^{-1}x'_j\dfrac{\partial
H}{\partial
x}(x_j,u(x_j))\\[5pt]
&+&\displaystyle\sum_{j\ge0}E(x_0)E(x_j)^{-1}x'_j\dfrac{\partial H}{\partial
\yy} (x_j,u(x_j))u'(x_j).
\end{array}
$$
where the notation $x'_j$ stands for the derivative, at the point
$x_0$, of $f_u^j(x)$ as a function of $x$. Arguing as in
\cite{Hak}, Lemma 4.4, we have that $|x'_j|\le 2|x_j/x_0|^{k+p+1}$
for all $j\ge0$. On the other hand, recall that
$E'(x)=-\frac{\mathcal{D}(x)+\mathcal{C}x^p}{x^{p+1}} E(x)$. Using
these two arguments, equations (\ref{eq:H}), (\ref{eq:l1}), Lemma
\ref{lm:sum-of-xj} and the inequality
$\|u'(x_j)\|\leq|x_j|^{m-p-2}$ from the definition of $\mc
H_\delta$, it is easy (but tedious) to check that each of the four
summands in the expression of $(Tu)'(x_0)$ is bounded by a
positive constant times $|x_0|^{m-p-1}$. Therefore,
$\|(Tu)'(x)\|\le|x|^{m-p-2}$ for $|x|<\delta$, if $\delta$ is
small enough, which shows that $Tu\in\mc H_\delta$.

Let us prove that $T$ is a contraction map. Take $u, v\in\mc
H_{\delta}$. For $x_0\in S_\delta$, put $x_j=f_u^j(x_0)$ and
$y_j=f_v^j(x_0)$. Arguing as in \cite{Hak}, Lemma 4.9, with a slight modification as in
\cite{Lop}, Lemma 4.4, if $\delta$ is small enough, there exists $M>0$, independent of $x_0\in S_\delta$ and $u,v\in\mc H_\delta$,
such that
\begin{equation}\label{eq:yj-xj}
|y_j-x_j|\le M|x_0|^m{\bf n}(v-u).
\end{equation}
Write $Tu(x_0)-Tv(x_0)=U_1+U_2+U_3$, where
$$\begin{array}{rcl}
U_1&=&\displaystyle \sum_{j\ge0}E(x_0)E(x_j)^{-1}\left[H(x_j,u(x_j))-H(y_j,v(y_j))\right]\\[5pt]
U_2&=&\displaystyle
\sum_{j\ge0}x_0^{-\mathcal{C}}x_j^{\mathcal{C}}\left[\wh{E}(x_0)\wh{E}(x_j)^{-1}-\wh{E}(x_0)\wh{E}(y_j)^{-1}\right]
H(y_j,v(y_j))\\[5pt]
U_3&=&\displaystyle
\sum_{j\ge0}x_0^\mathcal{-C}x_j^{\mathcal{C}}\left[I-y_j^{\mathcal{C}}x_j^{-\mathcal{C}}\right]\wh{E}(x_0)\wh{E}(y_j)^{-1}H(y_j,v(y_j)).
\end{array}$$

To bound $U_1$, we write first $$
H(x_j,u(x_j))-H(y_j,v(y_j))=H(x_j,u(x_j))-H(y_j,u(x_j))
+H(y_j,u(x_j))-H(y_j,v(y_j)). $$
 Using
(\ref{eq:H}) and taking into account that $u\in\mc{H}_\delta$,
that $m\geq p+2$ and that $x_j\sim y_j$
when $j\to\infty$ (see Proposition~\ref{fatoufu}), we obtain that
there are constants $c_1,c_2>0$ (independent of $\delta,x_0,u,v$)
such that
$$
\begin{array}{rcl}
\|H(x_j,u(x_j))-H(y_j,u(x_j))\|&\leq&
c_1|x_j|^{k+p+m-1}|x_j-y_j|,\\[5pt]
 \|H(y_j,u(x_j))-H(y_j,v(y_j))\|&\leq&
c_2|x_j|^{k+p+1}\|v(y_j)-u(x_j)\|.
\end{array}
$$
On the other hand, using similar arguments, there exists $c_3>0$
such that
$$
\begin{array}{rcl}
\|v(y_j)-u(x_j)\|&\le&\|v(y_j)-v(x_j)\|+\|v(x_j)-u(x_j)\|\\[5pt]
&\leq&c_3\left(|x_j|^{m-p-2}|y_j-x_j|+|x_j|^{m-1}{\bf
n}(v-u)\right).
\end{array}
$$
Thus, by Lemma \ref{lm:sum-of-xj} and equation (\ref{eq:yj-xj}),
and using the fact that $m\ge p+2-\sigma$, we get finally that
there exists $B_1>0$, such that, if $\delta>0$ is small enough and
$x_0\in S_\delta$ and $u,v\in\mc{H}_\delta$ then
$$
\|U_1\|\leq B_1|x_0|^m{\bf n}(v-u).
$$

To bound $U_2$, put $\wh{E}(x)=\exp(R(x))$ with
$$
R(x)=-\int\frac{\mathcal{D}(x)}{x^{p+1}}dx=\diag\left(r_2(x),...,r_n(x)\right),$$
and $r_l(x)=\frac{d_{l0}}{(p-\nu_l)x^{p-\nu_l}}\tilde r_l(x)$,
where $\tilde r_l(x)\in\C[x]$ is a polynomial of degree at most
$p-\nu_l-1$ such that $\tilde r_l(0)=1$, for $l=2,...,n$.
Observe that $d_{l0}\neq0$ if and only if $r_l(x)\not\equiv0$. Since
$x_1=x_0-x_0^{k+p+1}+O(x_0^{k+p+1}u(x_0),x_0^{k+2p+2})$, we have
$$r_l(x_1)=r_l(x_0)+d_{l0}x_0^{k+\nu_l}(1+\theta_u(x_0)),\;l=2,...,n,$$
where $\|\theta_u(x_0)\|\leq c_4|x_0|$ for some constant $c_4>0$
independent of $u$. Since $\Real(d_{l0}x_1^{k+\nu_l})>0$ if
$r_l(x)\not\equiv0$ (by the definition of the interval $I$ and by Proposition~\ref{fatoufu}), we obtain, by induction on $j\geq 1$, that
$\Real(r_l(x_0)-r_l(x_j))\le0$ and $\Real(r_l(x_0)-r_l(y_j))\le0$
for $|x_0|\leq\delta$ sufficiently small and for all $2\le l\le n$
and all $j\ge1$. Therefore, if we put
$$
\wh{E}(x_0)\wh{E}(x_j)^{-1}-\wh{E}(x_0)\wh{E}(y_j)^{-1}
=\exp(a)-\exp(b),
$$
where $a=R(x_0)-R(x_j)$, $b=R(x_0)-R(y_j)$, then all diagonal entries of $a$ and $b$ are in the half space $\{z:\Real(z)\leq 0\}$ and thus
$$
\|\wh{E}(x_0)\wh{E}(x_j)^{-1}-\wh{E}(x_0)\wh{E}(y_j)^{-1}\|\leq|a-b|\max_{\xi\in[a,b]}\|\exp\xi\|\le
\|R(y_j)-R(x_j)\|.
$$
Finally, since $R'(x)=O(|x|^{-p-1})$ for $x\in S_\delta$, we
conclude, using $x_j\sim y_j$, that there exists a constant $c_5>0$ independent of
$\delta,x_0,u,v$ such that
$$\left\|\wh{E}(x_0)\wh{E}(x_j)^{-1}-\wh{E}(x_0)\wh{E}(y_j)^{-1}\right\|
\le c_5\frac{|y_j-x_j|}{|x_j|^{p+1}}$$ for all $j$. Then, applying Lemma
\ref{lm:sum-of-xj} and equations (\ref{eq:H}) and (\ref{eq:yj-xj}),
there exists $B_2>0$ such that, if $\delta>0$ is small enough and
$x_0\in S_\delta$ and $u,v\in\mc{H}_\delta$ then
$$\|U_2\|\le B_2|x_0|^m{\bf n}(u-v).$$

To bound $U_3$, we use
$$\left\|I-y_j^{\mathcal{C}}x_j^{-\mathcal{C}}\right\|=
\left\|I-\exp\left(\mathcal{C}\log\frac{y_j}{x_j}\right)\right\|\le
c_6\frac{|y_j-x_j|}{|x_j|},$$ with some constant $c_6>0$. Again by
Lemma \ref{lm:sum-of-xj}, equations (\ref{eq:H}) and (\ref{eq:yj-xj}) and the fact than $x_j\sim y_j$, there
exists $B_3>0$ such that, if $\delta>0$ is small enough and
$x_0\in S_\delta$ and $u,v\in\mc{H}_\delta$ then
$$\|U_3\|\le B_3|x_0|^m{\bf n}(u-v).$$

Summarizing, we have for any $u,v\in\mc{H}_\delta$ and every $x\in
S_\delta$,
$$\|Tu(x)-Tv(x)\|\le (B_1+B_2+B_3)|x|^m{\bf n}(u-v).
$$ We conclude that $T:\mc
H_{\delta}\to\mc H_{\delta}$ is a contraction map if $\delta$ is
small enough.

Finally, notice that we can rewrite the map $T$ as
$$
\begin{array}{rcl}
Tu(x_0)&=&E(x_0)\sum_{j\geq
0}\left(E(x_j)^{-1}u(x_j)-E(x_{j+1})^{-1}\overline{F}(x_j,u(x_j))\right)\\[5pt]
&=&
u(x_0)-E(x_0)E(x_1)^{-1}\overline{F}(x_0,u(x_0))+E(x_0)E(x_1)^{-1}Tu(x_1).
\end{array}
$$
The first equality shows that if $u\in\mc{H}_\delta$ satisfies
(\ref{eq:u-F1}) then $u$ is a fixed point of $T$. The second one
shows the converse.
\end{proof}

\strut

{\em End of the proof of Proposition~\ref{pro:main-technical-m}}.-
Fix $m\geq m_0$ and $\eta\in I$. Applying Proposition~\ref{pro:contraction-map}, there exists some $\delta=\delta(m,\eta)>0$
and a unique $u_{m,\delta'}\in\mc H^m_{\eta,\delta'}$ for any $0<\delta'\le\delta$ which is a solution of the invariance equation (\ref{eq:u-F1}). Moreover, by Proposition~\ref{fatoufu}, the orbit of any point by the map $f_{u_{m,\delta'}}$ converges to $0$. This shows statement (a) of Proposition~\ref{pro:main-technical-m}. Statement (b) follows once we remark that, if $\delta'<\delta$ and $u\in\mc H^m_{\eta,\delta}$ satisfies equation (\ref{eq:u-F1}),
then the restriction of $u$ to $S(\tau,\eta,\delta')$ belongs to $\mc
H^m_{\eta,\delta'}$ and satisfies also equation (\ref{eq:u-F1}) by Proposition~\ref{fatoufu}. Let us prove finally statement (c). By Proposition~\ref{fatoufu}, if we put $\tilde{v}=v|_{S(\tau,\eta,\delta')}$ then $f_{\tilde{v}}(S(\tau,\eta,\delta'))\subset S(\tau,\eta,\delta')$ and, since $\psi(x)=(x,v(x))$ is a parabolic curve of $F$, $\tilde{v}$ satisfies equation (\ref{eq:u-F1}). By Proposition~\ref{pro:contraction-map}, we have then $\tilde{v}=u_{m,\delta'}$.

\section{Two-dimensional case}\label{sec:dim2}
Consider $F\in\dif1\Cd 2$ and let $\G$ be a formal invariant curve of
$F$ which is not contained in the set of fixed points of $F$. In
this paragraph we show how to use the results in the preceding
paragraph in order to prove Theorem~\ref{Theorem1}. The proof
consists in two steps. In the first one, we reduce the general
situation to the case where $(F,\G)$ is in Ramis-Sibuya
form. This reduction, which can already be found in
\cite{Aba,Bro-C-L} and that we briefly report below, will also be
obtained as a particular case of a general result in any dimension which we discuss in section~\ref{sec:reduction}
(see Theorem~\ref{th:reduction-singularities-F}). In the second
step, we prove that if $(F,\G)$ is in RS-form then
Theorem~\ref{th:main-technical} applies either for $F$ or for
$F^{-1}$ and we find a parabolic curve of $F$ (attracting or
repelling) asymptotic to $\G$.

The reduction to RS-form is not much more than the well known result
of {\em reduction of singularities} of the formal vector field
$X=\log F$ {\em along} $\G$ (see \cite{Sei} or the book
\cite{Can-C-D} for a more recent presentation): after a finite
composition $\Phi:(\C^2,0)\to(\C^2,0)$ of local blow-ups centered at
the infinitely near points of $\G$, the transform of $X$ is written
as $\Phi^*X=x^k \bar{X}$, where $\{x=0\}$ is the exceptional divisor of
$\Phi$, $\bar{X}(0)=0$ and the linear part $D_0\bar{X}$ is
non-nilpotent; moreover, the strict transform $\wt{\G}$ of $\G$ by
$\Phi$ is non-singular, transversal to $\{x=0\}$ and invariant for
$\Phi^*X$. Since $\nu_0(\Phi^*X)\geq\nu_0(X)$ and the operation of
taking the infinitesimal generator is compatible with blow-ups at
singular points (see \cite{Bro-C-L}), $\Phi^*X$ is the infinitesimal
generator of the transformed diffeomorphism $\wt{F}\in\dif1\Cd 2$
satisfying $\Phi\circ\wt{F}=F\circ\Phi$. From the fact that $D_0\bar{X}$ is non-nilpotent, and up to several additional blow-ups, we deduce that $\wt{F}$ is in RS-form. Finally, if $\wt{\vp}:\Delta\to\C^2$
is a parabolic curve of $\wt{F}$ asymptotic to $\wt{\G}$ then
$\vp=\Phi\circ\wt{\vp}$ is a parabolic curve of $F$ asymptotic to
$\G$.

Concerning the second step, we can, in fact, give a better
description of how many parabolic curves a diffeomorphism in
RS-form may have:
\begin{proposition}\label{pro:RS-dim2}
Assume that $(F,\G)$ is in RS-form and let $k+1$ be the order of
$F$ and $p$ the Poincar\'{e} rank of $(F,\G)$. Then we
have:
\begin{enumerate}[(a)]
    \item If $p=0$ then there are  $k$ attracting and $k$
    repelling
    parabolic curves of $F$ asymptotic to $\G$.
    \item If $1\leq p<k$ then there are $p$ attracting and $p$
    repelling
    parabolic curves of $F$ asymptotic to $\G$.
    \item If $k<p$ there is at least one attracting and one repelling parabolic curve
    of $F$ asymptotic to $\G$.
\item If $1\leq p=k$ then there are $p$ attracting or $p$
repelling
    parabolic curves
    of $F$ asymptotic to $\G$.
    \end{enumerate}
\end{proposition}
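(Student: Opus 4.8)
The plan is to deduce everything from Theorem~\ref{th:main-technical} applied to both $F$ and $F^{-1}$, turning the problem into a counting of attracting directions inside the saddle domain. First I would record that $\G$ is invariant for $F^{-1}$ as well and that $(F^{-1},\G)$ is again in RS-form with the same $k$ and the same Poincar\'e rank $p$: indeed $\log F^{-1}=-\log F$, so passing from $F$ to $F^{-1}$ replaces $\lambda$ by $-\lambda$ and the principal linear part $D(x)+x^pC$ by its opposite; in particular $d_{20}$ is replaced by $-d_{20}$, and a direct substitution in the defining inequalities shows that the saddle domain is unchanged, $V(F^{-1})=V(F)=:V$. Next, by part (i) of Theorem~\ref{th:main-technical} every attracting direction $\tau$ of $F$ (a solution of $\xi^{k+p}=-\lambda$) contained in $V$ produces an attracting parabolic curve of $F$ asymptotic to $\G$, and by part (iii) this curve is sectorially unique; since the sectors attached to distinct attracting directions are pairwise disjoint, these curves are genuinely distinct. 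The same statement applied to $F^{-1}$ (whose attracting directions solve $\xi^{k+p}=\lambda$) produces the repelling parabolic curves of $F$. Thus the number of attracting (resp. repelling) parabolic curves equals the number of solutions of $\xi^{k+p}=-\lambda$ (resp. $\xi^{k+p}=\lambda$) lying in $V$.

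It then remains a purely elementary problem in the geometry of the plane. In dimension two, since $D(0)\neq0$ forces $\nu_2=0$, the saddle domain is $V=\{x:\Real(-d_{20}/(\lambda x^{p}))>0\}$. For $p=0$ this is all of $\C$, so all $k+p=k$ attracting directions of $F$ and all $k$ attracting directions of $F^{-1}$ lie in $V$, which gives statement (a). For $p\ge1$, $V$ is a union of $p$ open sectors at the origin, each of angular width $\pi/p$, separated by $p$ gaps of the same width; meanwhile the $k+p$ attracting directions of $F$ are equally spaced with angular step $2\pi/(k+p)$, and those of $F^{-1}$ are the same configuration shifted by half a step $\pi/(k+p)$, so the $2(k+p)$ directions together are equally spaced and alternate between attracting and repelling. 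I would then count, sector by sector, how many of these equally spaced directions fall inside $V$.

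The decisive comparison is between the width $\pi/p$ of a saddle sector and the spacing $2\pi/(k+p)$ of the attracting directions, and the three regimes $p<k$, $p=k$, $p>k$ are exactly the cases $\pi/p>2\pi/(k+p)$, $\pi/p=2\pi/(k+p)$ and $\pi/p<2\pi/(k+p)$. When $p<k$ each saddle sector is strictly wider than the spacing, hence meets at least one attracting direction, giving a lower bound of $p$; the matching upper bound, together with the same count for the repelling family, follows from the alternation of the two families and a careful accounting of how the equally spaced points distribute among the $p$ congruent sectors, yielding (b). When $p=k$ the widths and the spacing coincide, and the same accounting shows that each saddle sector captures one direction, but in this borderline configuration the two families cannot both be captured across all sectors simultaneously; this produces the dichotomy of (d). When $p>k$ the sectors are narrower than the spacing, so some may contain no direction, and one only guarantees that at least one sector of $V$ contains an attracting direction and, applying the shifted family for $F^{-1}$, a repelling one, which gives (c).

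The hard part is the exactness of the counts in (b) and the precise form of the dichotomy in (d). Theorem~\ref{th:main-technical} only furnishes existence, so to pin down the numbers one must combine the lower bound (each saddle sector meets the appropriate family of directions) with a sharp combinatorial count that rules out extra curves, carefully tracking the alternation of attracting and repelling directions and the boundary cases where a direction could fall on $\partial V$. The dichotomy in (d) reflects precisely such a boundary phenomenon, where the equally spaced directions synchronize with the edges of the saddle sectors, and deciding which of the two families is captured is the most delicate point; I expect this is where the real work of the proof lies.
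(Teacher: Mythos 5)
Your framework is exactly the paper's: apply Theorem~\ref{th:main-technical} to both $F$ and $F^{-1}$, observe that they share the same saddle domain $V$, and count the attracting directions of each inside $V$; part (a) and the lower bound in (b) come out correctly this way. But two things go wrong. First, your reduction of the statement to the claim that the number of attracting (resp.\ repelling) parabolic curves \emph{equals} the number of solutions of $\xi^{k+p}=-\lambda$ (resp.\ $\xi^{k+p}=\lambda$) lying in $V$ is both unjustified and unnecessary: Theorem~\ref{th:main-technical} gives existence plus uniqueness among parabolic curves asymptotic to $\G$ defined on sectors containing a given direction; it does not give a census of all parabolic curves. Worse, the ``matching upper bound'' you invoke in (b) is false: when $p<k$ a saddle sector of width $\pi/p$ exceeds the spacing $2\pi/(k+p)$ and can contain many attracting directions (for $k=10$, $p=1$ the single sector of width $\pi$ contains at least five of the eleven directions), so strictly more than $p$ curves are produced. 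The counts in the proposition are lower bounds --- that is all the paper proves --- and the ``careful accounting that rules out extra curves'' you appeal to is chasing a false statement.

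Second, the two cases carrying the real content, (c) and (d), are not actually proved in your proposal. For (c) you assert, with no argument, that at least one saddle sector contains an attracting direction; this is precisely the point needing proof. The paper argues by contradiction: if no attracting direction lay in $V$, then, since consecutive directions are $2\pi/(k+p)>\pi/p$ apart, two of them cannot lie in the same closed complementary sector (of width $\pi/p$), so between every pair of consecutive directions there sits an entire sector $V_j$; this yields $k+p$ distinct sectors $V_j$, contradicting that there are only $p$. For (d), your guiding claim that ``the two families cannot both be captured across all sectors simultaneously'' is incorrect: if an attracting direction of $F$ lies strictly inside $V_j$ but off its bisectrix, then one of the two neighbouring attracting directions of $F^{-1}$ (at angular distance $\pi/(2p)$) also lies inside $V_j$, so generically both families are captured. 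The true dichotomy, which the paper proves, is that either the directions of $F$ avoid the boundaries of the sectors $V_j$, in which case each $V_j$ contains exactly one of them ($p$ attracting curves), or they all lie on those boundaries, in which case the bisectrices of the $V_j$ are attracting directions of $F^{-1}$ ($p$ repelling curves). Since you explicitly defer this point (``this is where the real work of the proof lies''), cases (c) and (d) remain genuine gaps in your argument.
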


\begin{proof}
Statement (a) corresponds to the Briot-Bouquet case and is already
proved in \cite{Hak}. It is also an immediate consequence of
Theorem~\ref{th:main-technical} since in this case the saddle
domain is the whole complex plane $\C$. Let us assume that $p\geq
1$.  Choose RS-coordinates $(x,y)$ at $0\in\C^2$ such that $F$ is
written as
$$
F(x,y)=\left(x-x^{k+p+1}(1+o(1)),y+x^k(ay+O(x)+O(y^2))\right)
$$
where $a\in\C^*$. These coordinates are also RS-coordinates for
the inverse diffeomorphism $F^{-1}$, which is written as
$$
F^{-1}(x,y)=\left(x+x^{k+p+1}(1+o(1)),y+x^k(-ay+O(x)+O(y^2))\right).
$$
Thus, both $F$ and $F^{-1}$ have the same saddle domain
$V=\{\text{Re}(\frac{a}{x^p})>0\}$ in these coordinates, whereas
the attracting directions for $F$ (respectively for $F^{-1}$) are determined by
the $(k+p)$-th roots of $1$ (respectively of $-1$). Notice also
that we only need to prove the statement for attracting parabolic
curves in the cases (b) and (c). Write $V=V_1\cup\cdots\cup V_p$
as a union of disjoint open sectors of opening $\pi/p$. If $p<k$,
since the angle between two consecutive attracting directions is
$2\pi/(k+p)<\pi/p$, each $V_j$ contains at least one attracting
direction. Theorem~\ref{th:main-technical} thus implies (b). Let
us show (c) by contradiction. Assume that $p>k$ and there is no
attracting direction contained in $V$. Then, since
$2\pi/(p+k)>\pi/p$, the sector between any pair of consecutive
attracting directions should contain one of the sectors $V_j$,
showing that there are as much sectors $V_j$ as attracting
directions, which is impossible.
 Finally, if $p=k$ the angle between two consecutive attracting directions is $\pi/p$. Then,
either each $V_j$ contains exactly one such attracting direction
or the attracting directions are in the boundary of the sectors
$V_j$. But, in this last case, the bisectrices of the sectors
$V_j$ are attracting directions of $F^{-1}$. This shows (d), using Theorem~\ref{th:main-technical}.
\end{proof}

\section{Blow-ups and ramifications of diffeomorphisms}\label{sec:blow-ups}

A consequence of the property of a parabolic curve $\vp$ being
asymptotic to a formal curve $\G$ is that these two objects share
the same {\em sequence of iterated tangents}. This is a very
natural notion (partially motivated by the corresponding concept
for trajectories of real vector fields developed in
\cite{Can-Mou-San1,Can-Mou-San2}) which means that, for any sequence
of finitely many punctual blow-ups $\pi:\widetilde{\C}^n\to\C^n$,
the transformed map
$\tilde{\varphi}=\pi^{-1}\circ\varphi:\Delta\to\wt{\C}^n$ has a
limit $\lim_{s\to 0}\tilde{\varphi}(s)\in\pi^{-1}(0)$, and this
limit is equal to the corresponding point of the transformed of
 $\G$ by $\pi$.

This property suggests that the operation of blowing-up is a good
tool for the problem of existence of parabolic curves of
$F\in\dif1\Cd n$ asymptotic to a formal invariant curve $\G$. This
approach works satisfactorily in dimension two, as we have shown
in section~\ref{sec:dim2}, since we have reduction of
singularities for vector fields and punctual blow-ups are
compatible with the operation of taking the infinitesimal
generator of $F$.

 In higher
dimension, the situation is much less clear since we do not have
reduction of singularities of vector fields by blowing-up points
in the sequence of iterated tangents of an invariant formal curve;
not even by blowing-up invariant centers of bigger dimension:
there are examples of (polynomial) vector fields in $\C^3$ with a
formal invariant curve $\G$ at the origin whose strict transform
by any sequence of blow-ups of points or invariant analytic
non-singular curves at the corresponding point of the transform of
$\G$ has always a nilpotent linear part (see Example 1.2 in
\cite{Pan}). Together with blow-ups, we also need to consider
ramifications.

\strut

Let $Z$ be a germ of a non-singular analytic submanifold of $\C^n$
at the origin. Let $U$ be an open neighborhood of $0\in\C^n$ where
$Z$ has a representative $Z'\subset U$ which is a non-singular
analytic submanifold of $U$. We will denote by $\pi_Z:M\to U$ the
blow-up of $U$ with center $Z'$, which we will simply call {\em
the blow-up with center $Z$}. The fiber $\pi_Z^{-1}(0)$ can be
identified with the projective space over the normal space,
$T_0\C^n/T_0Z$, of $Z$ at the origin.

Let $X$ be a formal vector field at $0\in\C^n$ with $\nu_0(X)\ge1$. A germ $Z$ of non-singular
analytic submanifold is {\em invariant for $X$} if
$X(g)\in I(Z)$ for any $g\in I(Z)$, where $I(Z)$ denotes the ideal of holomorphic germs
vanishing on $Z$. In this case, the
 {\em multiplicity of $X$ along
$Z$} is defined as
$$
\nu_Z(X)=\min\{l\geq 1\,:\,X(I(Z))\subset
I(Z)^{l}\setminus I(Z)^{l+1}\}.$$

Suppose now that $\G$ is a formal invariant curve of $X$ not
contained in the singular locus of $X$. A germ of holomorphic
map $\phi:(\C^n,0)\to(\C^n,0)$ will be called a {\em permissible
transformation for the pair $(X,\G)$} if it is of one of the
following types:

\begin{enumerate}
\item[1.] The germ of a holomorphic diffeomorphism.
\item[2.]  Let $Z$ be a germ of non-singular analytic submanifold at
$0\in\C^n$,
  transversal to $\G$ at $0$,
  invariant for $X$ and such that it satisfies $\nu_Z(X)\geq\nu_0(X)$.
  Let $\pi_Z:M\to U$ be the blow-up with center $Z$ and
  let $p\in\pi_Z^{-1}(0)$ be the point corresponding to the tangent of
  $\G$.
  Then there is
  an analytic chart $\tau$ of $M$ at $p$
  so that $\phi$ is the germ of $\pi_Z\tau^{-1}$ at $0\in\C^n$. We will
  say that $Z$ is a {\em permissible center} and that $\phi$ is a {\em permissible blow-up}.
    \item[3.] The curve $\G$ is non-singular, there are
    analytic coordinates $\zz=(z_1,...,z_n)$ at $0\in\C^n$
     such that $Z=\{z_1=0\}$ is invariant for $X$ and transversal to $\G$ and $\phi$ is the
    map
    $
    \phi(\zz)=(z_1^q,z_2,...,z_n)
    $ for some $q\in\N_{>0}$. We will
  say that $\phi$ is a {\em permissible $q$-ramification} (with respect to $Z$).
\end{enumerate}
In the last two cases, the non-singular hypersurface
$E_\phi=\phi^{-1}(Z)$ is called the {\em exceptional divisor of
$\phi$}. For convenience, $E_\phi=\emptyset$ in the case where
$\phi$ is a diffeomorphism. Notice that a permissible transformation $\phi$ is a local
diffeomorphism at every point in the complement of $E_\phi$.

Observe that the notion of permissible center depends essentially on $X$ (except for the transversality with $\G$); however, the notion of permissible blow-up depends on both $X$ and $\G$, since it chooses the point in the divisor corresponding to the tangent of $\G$. Note that $Z=\{0\}$ is a permissible center for $(X,\G)$  as long as $0$ is a singular point of $X$.

\begin{proposition}\label{pro:admissible-blow-up-X}
Let $\phi:\Cd n\to\Cd n$ be a permissible transformation for
$(X,\G)$. There exist a unique formal curve $\wt{\G}$ at $0\in\C^n$
such that $\phi^*\G\subset\wt{\G}$ (where
$\phi^*\G=\{g\circ\phi\,:\,g\in\G\}$) and a unique formal vector field
$\wt{X}$ at $0\in\C^n$ such that $\phi_\ast\wt{X}=X$, which
also has $\wt{\G}$ as an invariant curve and satisfies
$\nu_0(\wt{X})\geq\nu_0(X)$.
\end{proposition}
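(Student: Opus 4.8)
The plan is to handle the three types of permissible transformation separately. The diffeomorphism case is immediate: one sets $\wt\G=\phi^*\G$ and $\wt X=\phi^*X$, and every assertion is trivial since $\phi$ is invertible. So assume $\phi$ is a permissible blow-up or a permissible ramification; in both cases $\phi$ is a local diffeomorphism off its exceptional divisor $E_\phi$, and I may fix coordinates in which, say, $\phi(\xx)=(x_1,x_1x_2,\dots,x_1x_d,x_{d+1},\dots,x_n)$ with $E_\phi=\{x_1=0\}$ for a blow-up of a center $Z=\{z_1=\cdots=z_d=0\}$ of codimension $d$ (the distinguished point being the tangent of $\G$, i.e.\ the direction $[1:0:\cdots:0]$ in the normal space), or $\phi(\zz)=(z_1^q,z_2,\dots,z_n)$ with $E_\phi=\{z_1=0\}$ for a $q$-ramification.

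\emph{Construction of $\wt\G$.} I would pick an irreducible parametrization $\g(s)$ of $\G$ and lift it through $\phi$. Transversality of $\G$ to $Z$, together with the choice of the distinguished point, guarantees that the components have the orders needed for the lift to be a genuine element of $\C[[s]]^n$: for the blow-up one normalizes $\g_1=s^m$ (with $m=\nu_0(\G)$) and takes $\wt\g=(\g_1,\g_2/\g_1,\dots,\g_d/\g_1,\g_{d+1},\dots,\g_n)$, the quotients lying in $\C[[s]]$ because $\mathrm{ord}(\g_j)>m$ for $2\le j\le d$; for the ramification one first reparametrizes $s=t^q$ and sets $\wt\g(t)=(t,\g_2(t^q),\dots,\g_n(t^q))$. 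In either case $\wt\g$ is again irreducible (a nontrivial factorization $\wt\g=\rho(s^l)$ would descend, via $\phi$, to one of $\g$), so it defines a prime ideal $\wt\G$, transversal to $E_\phi$, and $\phi^*\G\subset\wt\G$ since $(g\circ\phi)\circ\wt\g=g\circ\g=0$ for $g\in\G$. For uniqueness I would show that any formal curve not contained in $E_\phi$ and containing $\phi^*\G$ coincides with $\wt\G$: its image under $\phi$ is a nonconstant parametrized curve annihilated by $\G$, hence a reparametrization of $\g$, and lifting back recovers $\wt\G$. (Curves contained in $E_\phi$ that map into $\phi^{-1}(0)$ also contain $\phi^*\G$ formally, so the uniqueness is understood within the class of curves not contained in $E_\phi$; $\wt\G$ is the strict transform.)

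\emph{Construction of $\wt X$ and invariance of $\wt\G$.} Off $E_\phi$ the identity $\phi_*\wt X=X$ forces $\wt X=(\phi|_{M\setminus E_\phi})^*X$, so $\wt X$ is unique as soon as it exists as a formal field; existence amounts to checking the explicit expressions have no pole along $E_\phi$. Writing $X=\sum a_i\partial_{z_i}$, the defining relations give $\wt X(x_1)=a_1\circ\phi$, $\wt X(x_j)=(a_j\circ\phi-x_j\,a_1\circ\phi)/x_1$ for $2\le j\le d$ and $\wt X(x_j)=a_j\circ\phi$ for $j>d$ in the blow-up case, and $\wt X(z_1)=(a_1\circ\phi)/(qz_1^{q-1})$, $\wt X(z_j)=a_j\circ\phi$ for the ramification. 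These are pole-free because the center is invariant: $a_1,\dots,a_d\in I(Z)$ forces $a_i\circ\phi\in(x_1)$ (resp.\ $a_1=z_1b_1$, so $a_1\circ\phi=z_1^q(b_1\circ\phi)$), cancelling the denominators. For the invariance of $\wt\G$ under $\wt X$ I would differentiate $\phi\circ\wt\g=\g$ (resp.\ $\phi\circ\wt\g=\g(t^q)$) and combine with $X|_\g=h\g'$ and $\phi_*\wt X=X$; invertibility of $D\phi$ off the divisor then yields $\wt X|_{\wt\g}=\wt h\,\wt\g'$, with $\wt h=h$ in the blow-up case and $\wt h(t)=h(t^q)/(qt^{q-1})$ in the ramification case (a power series, since invariance of $Z$ forces $\mathrm{ord}(h)\ge1$). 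This is exactly condition~3 of invariance for $(\wt X,\wt\G)$.

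\emph{The main obstacle: $\nu_0(\wt X)\ge\nu_0(X)$.} Set $\nu=\nu_0(X)=\min_i\mathrm{ord}(a_i)$. The components $\wt X(x_1)$ and $\wt X(x_j)$ for $j>d$ have order $\ge\nu$ for free (because $\phi$ does not decrease orders), and in the ramification case all components do; the only place where the multiplicity can a priori drop is in the blow-up directions $2\le j\le d$, where $\wt X(x_j)$ is only manifestly divisible by $x_1^{\nu-1}$. Here both hypotheses on a permissible center enter. First, $\nu_Z(X)\ge\nu_0(X)$ gives $a_j\in I(Z)^\nu$ for $j\le d$, so $a_j\circ\phi$ and $a_1\circ\phi$ are divisible by $x_1^\nu$ and $\wt X(x_j)=x_1^{\nu-1}R_j$ with $R_j(0)$ equal to the coefficient $\lambda_j$ of $z_1^\nu$ in $a_j$. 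Second, the invariance of $\G$ forces $\lambda_j=0$: comparing orders in $a_j\circ\g=h\,\g_j'$, the bound $\mathrm{ord}(h)\ge m(\nu-1)+1$ (from the $j=1$ relation, using $\mathrm{ord}(a_1\circ\g)\ge m\nu$) together with $\mathrm{ord}(\g_j')\ge m$ gives $\mathrm{ord}(a_j\circ\g)\ge m\nu+1$, whereas the leading term of $a_j\circ\g$ is exactly $\lambda_j s^{m\nu}$; hence $\lambda_j=0$, $R_j(0)=0$, $\mathrm{ord}(\wt X(x_j))\ge\nu$, and $\nu_0(\wt X)\ge\nu$. I expect this order bookkeeping---the fact that the distinguished, invariant direction is precisely the one along which no multiplicity is lost---to be the crux of the proof; the rest is formal manipulation.
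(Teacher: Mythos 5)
Your proposal is correct and follows essentially the same route as the paper's proof: the same three-case split, the same lifted parametrizations defining $\wt{\G}$, the same explicit pole-free formulas for $\wt{X}$ (well defined because the center is invariant), and the same crux for the multiplicity bound, namely that invariance of $\G$ plus tangency to the $z_1$-axis excludes the monomial $z_1^{\nu_0(X)}$ from the components $a_j$, $j\geq 2$, in the center directions; your order bookkeeping via the series $h$ is just a rephrasing of the paper's strict inequality $\nu(a_1(\g(s)))<\nu(a_j(\g(s)))$, which is derived from the very same relation $a_i\circ\g=h\,\g_i'$.

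Two points where you diverge are worth recording. First, you actually verify that $\wt{\G}$ is invariant for $\wt{X}$ (differentiating $\phi\circ\wt{\g}=\g$ and using that $\det D\phi$ restricted to $\wt{\g}$ is a nonzero series); the paper claims this in the statement but its proof never addresses it. Second, your caveat on uniqueness is not pedantry but a genuine correction: for a blow-up whose center has codimension at least two (in particular a punctual blow-up), \emph{every} formal curve $\bar{\G}$ contained in $\phi^{-1}(0)$ satisfies $\phi^*\G\subset\bar{\G}$, because $g\circ\phi$ vanishes identically along such a curve for every $g$ with $g(0)=0$. Hence the unrestricted uniqueness asserted in the proposition is literally false, and the paper's own uniqueness argument silently assumes that $\phi\circ\bar{\g}$ is nonconstant, i.e. that $\bar{\G}\not\subset\phi^{-1}(0)$. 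Your formulation --- uniqueness among curves not contained in $E_\phi$, so that $\wt{\G}$ is the strict transform --- is the version that is both true and sufficient for the later applications (Proposition~\ref{pro:admissible-blowing-up} and the reduction theorems), since the transforms used there are never contained in the exceptional divisor.
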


We will call $\wt{X}$ and $\wt{\G}$ the {\em transforms} of $X$ and $\G$ by $\phi$, respectively.

\begin{proof}
The case where $\phi$ is a germ of a diffeomorphism is clear.

Suppose that $\phi$ is a permissible blow-up with center $Z$. Take
analytic coordinates $\zz=(z_1,z_2,...,z_n)$ so that the tangent of
$\G$ corresponds to the $z_1$-axis and so that
$Z=\{z_1=z_2=\cdots=z_t=0\}$ where $t=\codim Z$ (thus $I(Z)$ is
generated by $z_1,...,z_t$). Then we may write $\phi:\Cd n\to\Cd n$
as
$$
\phi(\zz)=(z_1,z_1z_2,...,z_1z_t,z_{t+1},...,z_n).
$$
Let $\g(s)=(\g_1(s),...,\g_n(s))\in\C[[s]]^n$ be a parametrization
of $\G$ in the coordinates $\zz$. Then
$$
m=\nu_0(\G)=\nu(\g_1(s))<\nu(\g_j(s))\mbox{ for }j=2,...,n,
$$
where $\nu$ denotes the order in $s$. Also,
\begin{equation}\label{eq:gamma-tilde}
\wt{\g}(s)=
\Bigl(\g_1(s),\frac{\g_2(s)}{\g_1(s)},...,\frac{\g_t(s)}{\g_1(s)},\g_{t+1}(s),...,\g_n(s)\Bigr)
\in\C[[s]]^n
\end{equation}
is a parametrization of the formal curve $\wt{\G}$ which satisfies
$\phi^*\G\subset\wt{\G}$. The uniqueness of $\wt{\G}$ can be seen as follows: if $\bar{\g}(s)=(\bar{\g}_1(s),\bar{\g}_2(s),...,\bar{\g}_n(s))$ is a parametrization of another formal curve $\bar{\G}$ satisfying $\phi^*\G\subset\bar{\G}$ then we will have that $\phi\circ\bar{\g}(s)$ is another parametrization of $\G$ and necessarily $\phi\circ\bar{\g}(s)=\g(\sigma(s))$ where $\sigma(s)\in\C[[s]]$. Using the expression of $\phi$ and equation (\ref{eq:gamma-tilde}) one shows that also $\bar{\g}(s)=\wt{\g}(\sigma(s))$ and we are done.

Write $X=\sum_{i=1}^n a_i(\zz)\frac{\partial}{\partial
z_i}$. Since $\G$ is invariant and not contained in the singular locus of $X$, $X|_{\g(s)}\neq 0$ is a multiple of
$\g'(s)$ (see section~\ref{sec:definitions}) and hence
\begin{equation}\label{eq:orders-aj}
\nu(a_1(\g(s)))<\nu(a_j(\g(s)))\mbox{ for }j=2,...,n.
\end{equation}
On the other hand, the condition of $Z$ being invariant implies
that, for $j=1,...,t$, $a_j(\zz)\in I(Z)$ and hence $a_j(\phi(\zz))$ is divisible by
$z_1$. Using this property, the vector field
$\wt{X}=\sum_{i=1}^n \tilde{a}_i(\zz)\frac{\partial}{\partial
z_i}$ defined by
$$
\left\{
  \begin{array}{ll}
    \tilde{a}_j(\zz)=\dfrac{a_j(\phi(\zz))-z_ja_1(\phi(\zz))}{z_1}, &
 \hbox{for $j=2,...,t$;} \\[7pt]
    \tilde{a}_j(\zz)=a_j(\phi(\zz)), & \hbox{for $j\in\{1,t+1,...,n\}$,}
  \end{array}
\right.
$$
is formal and satisfies $\phi_\ast\wt{X}=X$. Let us show that $\nu_0(\wt{X})\geq\nu_0(X)$. Denote by $r=\nu_0(X)$. Since $Z$ is
a permissible center, we have $r\leq\nu_Z(X)$ and thus
$X(z_j)=a_j(\zz)\in I(Z)^r$ for $j=1,...,t$. Then, according to the
expression of the transformed vector field $\wt{X}$, it suffices
to show that for $j\in\{2,...,t\}$ the component $a_j(\zz)$ does not
contain a monomial of the form $c z_1^r$ with $c\neq 0$. Suppose,
on the contrary, that such a monomial appears in $a_{j}(\zz)$ for some
$j\in\{2,...,t\}$. Then we will have that the series
$a_j(\g(s))\in\C[[s]]$ has order $rm$. But since
$\nu(a_l(\g(s)))\geq \nu(a_l)\nu_0(\G)\geq rm$ for any
$l\in\{1,...,n\}$, this is a contradiction with
(\ref{eq:orders-aj}).

Assume now that $\phi$ is a permissible $q$-ramification,
written in some coordinates $\zz$ as
$\phi(\zz)=(z_1^q,z_2,...,z_n)$. Consider a parametrization of
$\G$ of the form $\g(s)=(s,\g_2(s),...,\g_n(s))$ in these
coordinates (recall that, from the definition of permissible
ramification, $\G$ is non-singular). Then
 $$
\wt{\g}(s)=(s,\g_2(s^q),...,\g_n(s^q))\in\C[[s]]^n
$$
is a parametrization of a formal curve $\wt{\G}$ satisfying
$\phi^*\G\subset\wt{\G}$. Uniqueness of $\wt{\G}$ comes from the property of $\G$ being non-singular: $\G$ is generated by the series $z_j-\g_j(z_1)$ for $j=2,...,n$ and thus, if $\phi^*\G\subset\wt{\G}$, then $\wt{\G}$ must be the (prime) ideal generated by $z_j-\g_j(z_1^q)$ for $j=2,...,n$.

On the other hand, being $Z=\{z_1=0\}$
invariant for $X$, if we write $X=\sum_{i=1}^n
a_i(\zz)\frac{\partial}{\partial z_i}$ then we have $
a_1(\zz)=z_1\bar{a}_1(\zz)$, where $\bar{a}_1(\zz)$ is a formal
series. We can check that the formal vector field $\wt{X}$ defined
by
$$
\wt{X}=\frac{z_1 \bar{a}_1(\phi(\zz))}{q}\frac{\partial}{\partial
z_1}+\sum_{i=2}^n a_i(\phi(\zz))\frac{\partial}{\partial z_i}
$$
satisfies $\phi_\ast\wt{X}=X$ and, clearly, $\nu_0(\wt{X})\geq\nu_0(X)$.
\end{proof}

It is worth to notice that
Proposition~\ref{pro:admissible-blow-up-X} remains true, except
for the uniqueness of the curve $\wt{\G}$ satisfying
$\phi^*\G\subset\wt{\G}$, if the condition of $\G$ being
non-singular in the definition of permissible ramification is
removed (consider, for example, $\G=(y^2-x^3)$ and $\phi(x,y)=(x^2,y)$).

\begin{remark}\label{rk:finite-jets}
Observe that finite truncations of the expression of the transform of a vector field by a permissible transformation are \emph{finitely determined}: if $\phi$ is a permissible transformation for $(X,\G)$ and $N\in\N$, there exists $N'\in\N$ such that if $\phi$ is also ``permissible'' for another formal vector field $Y$ (considering just the conditions in the definition of permissible center concerning the vector field and not the formal invariant curve) and $J_{N'}Y=J_{N'}X$ then for the transforms $\wt{X},\wt{Y}$ of $X,Y$ we have $J_{N}\wt{Y}=J_{N}\wt{X}$.
\end{remark}

Consider now a diffeomorphism $F\in\dif1\Cd n$ with a formal invariant curve $\G$ not contained in the set of fixed points of $F$. A germ of holomorphic map $\phi:\Cd n\to\Cd n$ will be called a \emph{permissible transformation for $(F,\G)$} if it is a permissible transformation for $(\log F,\G)$. In the following proposition, we show that permissible
transformations are compatible with the operation of taking the
infinitesimal generator of $F$ and with the existence of parabolic
curves.

\begin{proposition}\label{pro:admissible-blowing-up}
Let $\phi:\Cd n\to\Cd n$ be a permissible transformation for
$(F,\G)$, and let $\wt{X}$ and $\wt{\G}$ be the transforms of $\log F$ and $\G$ by $\phi$. There exists a unique  diffeomorphism $\wt{F}\in\dif1\Cd n$ such that $\phi\circ\wt{F}=F\circ\phi$, which
also satisfies $\log \wt{F}=\wt{X}$. Moreover, if $\wt{\vp}:\Delta\to\C^n$ is a parabolic curve of
$\wt{F}$ asymptotic to $\wt{\G}$ such that $\wt{\vp}(\Delta)$ is
contained in the domain of $\phi$ and does not intersect $E_\phi$,
then $\vp=\phi\circ\wt{\vp}$ is a parabolic curve of $F$ asymptotic
to $\G$ (provided $\vp$ is injective).
\end{proposition}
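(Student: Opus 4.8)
The plan is to produce $\wt F$ as the exponential of $\wt X$ and then to transport the parabolic curve through $\phi$. Since Proposition~\ref{pro:admissible-blow-up-X} gives $\nu_0(\wt X)\ge\nu_0(X)\ge 2$, the discussion in Section~\ref{sec:definitions} lets me set $\wt F:=\Exp\wt X\in\diff1\Cd n$; then $\log\wt F=\wt X$ holds by construction. To obtain the intertwining $\phi\circ\wt F=F\circ\phi$ I would pass to comorphisms, writing $\phi^\ast g=g\circ\phi$. The identity $\phi_\ast\wt X=X$ of Proposition~\ref{pro:admissible-blow-up-X} means precisely that $\wt X$ and $X$ are $\phi$-related, i.e. $\wt X(\phi^\ast g)=\phi^\ast(X(g))$ for every $g$; by induction this yields $\wt X^{\,j}(\phi^\ast g)=\phi^\ast(X^{\,j}g)$ for all $j$. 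Applying the exponential formula (\ref{eq:exponential}) and using that $\phi^\ast$ is a continuous ring homomorphism for the Krull topology (so it commutes with the convergent sum), I get $(\phi^\ast g)\circ\wt F=\sum_{j}\wt X^{\,j}(\phi^\ast g)/j!=\phi^\ast(g\circ F)$, which is exactly the comorphism form of $\phi\circ\wt F=F\circ\phi$.

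The step I expect to be the main obstacle is to check that $\wt F$ is \emph{convergent}, i.e. that it genuinely belongs to $\dif1\Cd n$ and not merely to $\diff1\Cd n$; this cannot be read off from $\wt X$, whose convergence is exactly the open problem discussed in the introduction. I would deduce it from the intertwining together with the invariance of the center. Off the exceptional divisor $E_\phi$ the map $\phi$ is a biholomorphism, so $\wt F=\phi^{-1}\circ F\circ\phi$ is holomorphic there, and it remains to cross $E_\phi$ by computing the components of $\wt F$ in the charts of Proposition~\ref{pro:admissible-blow-up-X}. In the blow-up chart $\phi(\zz)=(z_1,z_1z_2,\dots,z_1z_t,z_{t+1},\dots,z_n)$ the components $z_1\circ\wt F=(z_1\circ F)\circ\phi$ and $z_j\circ\wt F=(z_j\circ F)\circ\phi$ for $j>t$ are $\phi^\ast$ of holomorphic functions, hence convergent; for $2\le j\le t$ one gets $z_j\circ\wt F=\bigl((z_j\circ F)\circ\phi\bigr)/\bigl((z_1\circ F)\circ\phi\bigr)$, and since $Z$ is invariant for $F$ one has $z_j\circ F\in I(Z)=(z_1,\dots,z_t)$, so that $(z_j\circ F)\circ\phi$ is divisible by $z_1$ while $(z_1\circ F)\circ\phi$ equals $z_1$ times a unit, whence the quotient is holomorphic. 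The ramification case $\phi(\zz)=(z_1^q,z_2,\dots,z_n)$ is analogous, taking the distinguished $q$-th root of $(z_1\circ F)\circ\phi=z_1^q\cdot(\text{unit})$. Once $\wt F$ is known to be holomorphic, uniqueness is immediate: any holomorphic solution of $\phi\circ(\cdot)=F\circ\phi$ agrees with $\phi^{-1}\circ F\circ\phi$, hence with $\wt F$, off $E_\phi$, and therefore everywhere by the identity principle.

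With $\wt F$ in hand I would verify the three conditions of Definition~\ref{def:parabolic-curve} for $\vp=\phi\circ\wt\vp$. Continuity at $0$ with $\vp(0)=\phi(\wt\vp(0))=\phi(0)=0$ and holomorphy on $\Delta$ are clear, since $\wt\vp$ maps into the domain of the holomorphic germ $\phi$; injectivity is assumed, and shrinking $\Delta$ places $\vp(\Delta)$ in the domain of a representative of $F$. Invariance follows from the holomorphic identity $\phi\circ\wt F=F\circ\phi$: one has $F\circ\vp=\phi\circ\wt F\circ\wt\vp$, and $\wt F(\wt\vp(\Delta))\subset\wt\vp(\Delta)$ gives $F(\vp(\Delta))\subset\phi(\wt\vp(\Delta))=\vp(\Delta)$. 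For stability, $\phi$ is injective on $\wt\vp(\Delta)$ because $\vp=\phi\circ\wt\vp$ and $\wt\vp$ are both injective, so $\vp^{-1}=\wt\vp^{-1}\circ(\phi|_{\wt\vp(\Delta)})^{-1}$ on $\vp(\Delta)$; iterating the intertwining to $\phi\circ\wt F^{\,j}=F^{\,j}\circ\phi$ then gives $\vp^{-1}(F^{\,j}(\vp(s)))=\wt\vp^{-1}(\wt F^{\,j}(\wt\vp(s)))\to 0$ by the stability of $\wt\vp$.

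Finally, for asymptoticity, recall that $\wt\vp$ is asymptotic to $\wt\G$, so its asymptotic expansion is a parametrization $\wt\g(s)$ of $\wt\G$. As $\phi$ is holomorphic at $0$ with $\phi(0)=0$, the composition $\zz\circ\vp=\phi(\zz\circ\wt\vp)$ admits the formal substitution $\phi\circ\wt\g(s)$ as asymptotic expansion, by the standard stability of asymptotic expansions under a convergent map. By Proposition~\ref{pro:admissible-blow-up-X} the series $\phi\circ\wt\g$ is a parametrization of $\G$, so $\vp$ is asymptotic to $\G$ in the sense of Definition~\ref{def:asymptotic}, which completes the proof. The only genuinely delicate point is thus the convergence of $\wt F$ addressed in the second paragraph; the remaining verifications are formal bookkeeping once the holomorphic identity $\phi\circ\wt F=F\circ\phi$ is available.
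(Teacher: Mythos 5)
Your proposal is correct and takes essentially the same approach as the paper: the paper's proof defines $\wt{F}$ directly by the coordinate formulas forced by $\phi\circ\wt{F}=F\circ\phi$ (holomorphic because invariance of the center makes $(z_j\circ F)\circ\phi$ divisible by $z_1$ and $(z_1\circ F)\circ\phi$ a unit times $z_1$), and then identifies $\wt{F}$ with $\Exp\wt{X}$ via the relation $\wt{X}^j(g\circ\phi)=X^j(g)\circ\phi$ and the exponential formula, while you carry out exactly these two steps in the reverse order (first $\wt{F}:=\Exp\wt{X}$ and the formal intertwining, then convergence from the same quotient formulas). The treatment of the parabolic curve (invariance, stability through the iterated intertwining, and asymptoticity via $\g=\phi\circ\wt{\g}$) also matches the paper's argument.
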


The pair $(\wt{F},\wt{\G})$ is called the
{\em transform of $(F,\G)$ by $\phi$}.

\begin{proof}
If $\phi$ is a germ of a diffeomorphism, the result is clear.

Assume that $\phi$ is a permissible blow-up with center $Z$. Consider analytic coordinates $\zz=(z_1,z_2,...,z_n)$ so that $\G$ is tangent to the $z_1$-axis, $Z=\{z_1=z_2=\cdots=z_t=0\}$ and so that
$\phi(\zz)=(z_1,z_1z_2,...,z_1z_t,z_{t+1},...,z_n)$.
The condition $\phi\circ\wt{F}=F\circ\phi$ can be written as
$$
z_j\circ\wt{F}(\zz)=\left\{%
\begin{array}{ll}
    \dfrac{z_j\circ F (\phi(\zz))}{z_1\circ F (\phi(\zz))}, & \hbox{if $j=2,...,t$;}  \\[10pt]
    z_j\circ F(\phi(\zz)), & \hbox{if $j\in\{1,t+1,...,n\}$}. \\
\end{array}%
\right.
$$
As in the case of formal curves, the invariance of $Z$ for $\log F$ is equivalent to the condition
$g\circ F\in I(Z)$ for any $g\in I(Z)$ and therefore $z_j\circ F(\phi(\zz))$ is divisible by $z_1$ for $j=1,...,t$. Moreover, $z_1\circ F(\phi(\zz))=z_1(1+A(\zz))$ with $A(0)=0$ and we conclude that
$\wt{F}\in\dif1\Cd n$. In order to prove that $\log \wt{F}=\wt{X}$, observe that $\nu_0(\wt{X})\ge\nu_0(X)\geq2$ by Proposition~\ref{pro:admissible-blow-up-X}, where $X=\log F$. Then $\Exp\wt{X}$ is a formal tangent to identity
diffeomorphism at $0\in\C^n$. Moreover, since
$\wt{X}^j(g\circ\phi)=X^j(g)\circ\phi$ for any
$g\in\wh{\mathcal{O}}_n$ and any $j\geq 1$, using
(\ref{eq:exponential}) we have that $\Exp\wt{X}$ satisfies formally
$\phi\circ\Exp\wt{X}=F\circ\phi$ and hence $\Exp\wt{X}=\wt{F}$.

To finish the proof for permissible blow-ups, let
$\wt{\vp}:\Delta\to\C^n$ be a parabolic curve of $\wt{F}$ whose
image does not intersect $E_\phi$. Since $\phi\circ\wt{F}=F\circ\phi$, we deduce easily that
$\vp=\phi\circ\wt{\vp}$ is a a parabolic curve of $F$ (notice that
in this case $\phi$ is injective outside $E_\phi$). On the other
hand, if $\wt{\vp}$ is asymptotic to $\wt{\G}$, i.e.,
$\zz\circ\wt{\vp}$ has asymptotic expansion at $s=0$ equal to
$\wt{\g}(s)\in\C[[s]]^n$, where $\wt{\g}(s)$ is a
parametrization of $\wt{\G}$ in the given coordinates, then, being
$\g(s)=\phi\circ\wt{\g}(s)$ a parametrization of $\G$,
$\vp$ is asymptotic to $\G$.

Assume now that $\phi$ is a permissible $q$-ramification,
written in some coordinates $\zz$ as
$\phi(\zz)=(z_1^q,z_2,...,z_n)$. If $\phi\circ\wt{F}=F\circ\phi$, using the expression of
$\phi$ we obtain
$$
(z_1\circ\wt{F}(\zz))^q=z_1\circ
F(\phi(\zz)),\;\;z_j\circ\wt{F}(\zz)=
   z_j\circ F (\phi(\zz)),\,j=2,...,n,
$$
 and since $z_1\circ F(\zz)=z_1(1+A(\zz))$ with $A(0)=0$, by invariance of $\{z_1=0\}$, we conclude that
$\wt{F}\in\dif1\Cd n$. The claim concerning the infinitesimal generator and the one concerning parabolic curves are proved as in the case of permissible blow-ups.
\end{proof}

\begin{remark}
Observe that if $\phi$ is a permissible ramification
and $\wt{\vp}:\Delta\to\C^n$ is a parabolic curve, there is no
need for the composition $\vp=\phi\circ\wt{\vp}$ to be injective;
that is the reason of the hypothesis ``provided $\vp$ is
injective'' stated in the proposition.
\end{remark}

\section{Reduction to Ramis-Sibuya form}\label{sec:reduction}

In this section we show that for any $F\in\dif1\Cd n$ and any formal invariant curve $\G$ of $F$, the pair $(F,\G)$ can be reduced to Ramis-Sibuya form by permissible transformations.

Let $X$ be a formal vector field and $\G$ be a formal invariant curve of $X$. A {\em
sequence of permissible transformations for $(X,\G)$} is a
composition
$$
\Phi=\phi_l\circ\phi_{l-1}\circ\cdots\circ\phi_1:\Cd n\to\Cd n
$$
such that $\phi_1$ is a permissible transformation for $(X,\G)$
and, for $j=1,...,l-1$, $(X_j,\G_j)$ is the transform of
$(X_{j-1},\G_{j-1})$ by $\phi_j$ and $\phi_{j+1}$ is a permissible
transformation for $(X_j,\G_j)$. The last pair
$(X_l,\G_l)$ will be called the {\em transform of $(X,\G)$ by the
sequence $\Phi$}. We also define the {\em total divisor of
$\Phi$} as the set
$E_\Phi=(\phi_l\circ\phi_{l-1}\circ\cdots\circ\phi_2)^{-1}(E_{\phi_1})$,
which is a normal crossing divisor at $0\in\C^n$.  If $F\in\dif1\Cd n$ and $\G$ is a formal invariant curve, we define the same concept of {\em sequence of permissible transformations} for $(F,\G)$ and the corresponding {\em transform} $(\wt{F},\wt{\G})$ using $X=\log F$.

\begin{theorem}[Reduction of a diffeomorphism to Ramis-Sibuya form]\label{th:reduction-singularities-F}
Let $F\in\dif1\Cd n$, let $\G$ be a formal invariant curve of
$F$ and assume that $\G$ is not contained in the set of fixed points
of $F$. There is sequence $\Phi$ of permissible transformations for
$(F,\G)$ so that the transform $(\wt{F},\wt{\G})$ of $(F,\G)$ by
$\Phi$ is in Ramis-Sibuya  form and $\Fix(\wt{F})=E_\Phi$.
\end{theorem}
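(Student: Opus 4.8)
The plan is to carry out the whole reduction at the level of the infinitesimal generator $X=\log F$ and then transfer the conclusion to $F$. Concretely, I would produce a sequence $\Phi$ of permissible transformations for $(X,\G)$ whose transform $(\wt X,\wt\G)$ is a formal vector field of the shape (\ref{eq:X-RS-form}), with $\wt\G$ non-singular and transversal to $\{x=0\}$. By Proposition~\ref{pro:admissible-blow-up-X} every permissible transformation keeps $\nu_0\geq 2$, so $\Exp\wt X=\wt F$ is a tangent to the identity diffeomorphism, and by the reciprocal statement following (\ref{eq:X-RS-form}) it is in Ramis-Sibuya form in the sense of Definition~\ref{def:RS-form}; Proposition~\ref{pro:admissible-blowing-up} then guarantees that $(\wt F,\wt\G)$ is exactly the transform of $(F,\G)$ by $\Phi$. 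The identity $\Fix(\wt F)=E_\Phi$ will be read off at the very end from the normal form.

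First I would desingularize the curve. Since $\nu_0(X)\geq 2$ the origin is a singular point of $X$, hence $Z=\{0\}$ is a permissible center; blowing it up and following the infinitely near point prescribed by the tangent of $\G$ strictly lowers the multiplicity of the curve after finitely many steps, by the classical reduction of singularities of a formal curve. The multiplicity of $X$ can only increase along the way, so the successive origins stay singular and the blow-ups remain permissible. After finitely many permissible point blow-ups I reach a pair, still denoted $(X,\G)$, with $\G$ non-singular and transversal to an invariant smooth component $\{x=0\}$ of the exceptional divisor. In adapted coordinates $(x,\yy)$ I may factor $X=x^k\bar X$, and the invariance of $\G$ together with its transversality to $\{x=0\}$ force the $\partial/\partial x$-coefficient of $\bar X$ to be non-zero, so that $X$ determines a system of $n-1$ formal meromorphic ODEs as in the discussion following (\ref{eq:X-RS-form}), having the graph of $\G$ as a formal solution.

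The core step is the normalization of the transverse linear part of this system along the divisor $\{x=0\}$, which is the non-linear counterpart of Turrittin's theorem \cite{Tur,Ram-S}. The decisive simplification is that Ramis-Sibuya form only constrains the \emph{principal} linear part $\mc D(x)+x^p\mc C$, a finite jet of the coefficient matrix; by Remark~\ref{rk:finite-jets} this jet is finitely determined, so it is enough to normalize a finite jet and this can be achieved by \emph{holomorphic} transformations, leaving all higher order and inhomogeneous terms inside the remainder of (\ref{eq:X-RS-form}). I would then alternate three kinds of permissible transformations: polynomial (in particular constant) fibre-preserving changes $\yy\mapsto\yy+\Lambda(x,\yy)$, which play the role of formal gauge transformations and split and diagonalize the leading matrix once its distinct eigenvalues are separated; permissible blow-ups realizing the shearing transformations $y_i\mapsto x^{r_i}y_i$ that break a nilpotent leading part and adjust the Poincar\'e rank; and permissible $q$-ramifications $x\mapsto x^q$ that render the determining factors single-valued, turning the eigenvalues into polynomials in $x$. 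Iterating, and controlling the process by the Newton-polygon / Poincar\'e-rank data of \cite{Ram-S}, I expect to reach a principal linear part $\mc D(x)+x^p\mc C$ with $\mc D(x)$ diagonal of degree at most $p-1$, $\mc D(0)\neq 0$ (or $p=0$ and $\mc C\neq 0$), commuting with $\mc C$; that is, $\wt X$ is of the form (\ref{eq:X-RS-form}).

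The hard part will be precisely this last paragraph: realizing Turrittin's formal reduction while keeping every elementary step permissible. The two delicate points are (i) that each shearing is realized by a blow-up whose center is invariant, transversal to $\G$ and satisfies $\nu_Z(X)\geq\nu_0(X)$, so that it is a permissible center and, by Proposition~\ref{pro:admissible-blow-up-X}, the multiplicity stays $\geq 2$; and (ii) that the alternation of blow-ups and ramifications terminates, for which one needs a numerical invariant built from the Poincar\'e rank and the ramification index that strictly decreases, adapting the termination argument of the linear theory \cite{Tur,Ram-S}. Once (\ref{eq:X-RS-form}) is attained, the assertion $\Fix(\wt F)=E_\Phi$ is immediate: the $\partial/\partial x$-coefficient of $\wt X$ is $x^{k+p+1}u$ with $u(0,0)\neq 0$, so the fixed-point set of $\wt F$ is the single smooth component $\{x=0\}$, which by construction is the total divisor $E_\Phi$.
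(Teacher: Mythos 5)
Your global architecture matches the paper's: reduce everything to a statement about $X=\log F$ (the paper's Theorem~\ref{th:reduction-singularities-X-bis}), desingularize $\G$ by permissible point blow-ups, realize a Turrittin-type normalization by permissible transformations, and transfer back to $F$ via Proposition~\ref{pro:admissible-blowing-up}; the closing observation that $\Fix(\wt{F})=E_\Phi$ follows from $u(0,0)\neq 0$ is also fine. But there is a genuine gap exactly where you write that ``the hard part will be precisely this last paragraph'': the realization of Turrittin's reduction by permissible transformations is stated as an expectation, and the two delicate points you list (permissibility of the shearing centers, and termination) are left unresolved. Worse, running the Newton-polygon process \emph{directly on the nonlinear system}, as you propose, meets an obstacle you never address: since $\G$ is only formal, it cannot be straightened by a permissible (holomorphic) change of coordinates, so the system keeps a formal inhomogeneous part $c(x)$; the shearing blow-ups $y_i\mapsto x^{r_i}y_i$ are pole-free (hence permissible) only when that part is divisible by suitable powers of $x$, which is not automatic and is not arranged anywhere in your plan.

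The paper fills precisely this gap, by a mechanism absent from your proposal. It never normalizes the nonlinear system directly: it forms the auxiliary \emph{linear} vector field $Y$ whose coefficient matrix is obtained by evaluating along $\G$ through the purely formal change $\yy=\hat{\yy}+\bar{\g}(x)$ (a change which is not permissible and is never actually performed), applies the classical linear Turrittin theorem (Theorem~\ref{th:turritin}) to $Y$, and proves that the resulting T-transformations are permissible for $(Y,\{\ww=0\})$; invariance of each shearing center is deduced from Remark~\ref{rk:shearing} --- a non-invariant center would create poles and strictly raise the Poincar\'{e} rank, contradicting how the shearings are chosen in Turrittin's algorithm --- and termination is inherited from the linear theory rather than re-proved, which disposes of your points (i) and (ii). To return to the true nonlinear, inhomogeneous vector field, the paper then replaces the formal change by polynomial truncations $\phi_m:\yy=\wt{\yy}+J_m\bar{\g}(x)$, composes with the permissible blow-ups $\psi_M:(x,\ww)\mapsto(x,x^M\ww)$ to gain the needed divisibility by $x$, and invokes the finite determinacy of Remark~\ref{rk:finite-jets} together with the explicit correction term $W_M=Mx^{l+q}u(x,\bar{\g}(x))I_{n-1}\ww\frac{\partial}{\partial\ww}$ (accounting for the shift of the linear part produced by $\psi_M$) to conclude that the same sequence $\Psi$ is permissible for $(X_{m,M},\G_{m,M})$ and yields the form (\ref{eq:X-RS}), with a final separate check of the degenerate case where the correction makes $\mc{C}$ vanish. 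Your appeal to Remark~\ref{rk:finite-jets} points in the right direction, but without the auxiliary linear system, the truncations $\phi_m$, the rescalings $\psi_M$ and the correction $W_M$, finite determinacy alone does not produce the normalization.
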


The corresponding statement for vector fields is the following one.

\begin{theorem}[Reduction of a vector field to Ramis-Sibuya form]\label{th:reduction-singularities-X-bis}
Let $X$ be a formal vector field at $0\in\C^n$ with $X(0)=0$ and let
$\G$ be an invariant formal curve of $X$ not contained in the
singular locus of $X$. There exists a sequence
$\Phi$ of permissible transformations for $(X,\G)$ and there are
analytic coordinates $(x,\yy)=(x,y_2,...,y_n)$ at $0\in\C^n$ so
that the total divisor of $\Phi$ is
$E_\Phi=\{x=0\}$
and
 \begin{itemize}
 \item[(i)] The transform of $\G$ by $\Phi$ is non
 singular and tangent to the $x$-axis.
 \item[(ii)] The transform of $X$ by $\Phi$ is
 written as
     \begin{equation}\label{eq:X-RS}
     \Phi^*X=x^k\left[x^{p+1}u(x,\yy)\frac{\partial}{\partial x}+
     \left(c(x)+(\mc{D}(x)+x^p\mc{C}+O(x^{p+1}))\yy+O(\|\yy\|^2)\right)
     \frac{\partial}{\partial \yy}\right]
     \end{equation}
     where $u\in\C[[x,\yy]]$ with $u(0,0)\neq 0$, $c(x)\in\C[[x]]^{n-1}$ with $c(0)=0$, $k\geq 0$
      and either $p=0$ and $\mc{C}\neq0$ or $p\geq 1$ and $\mc{D}(x)$ a diagonal matrix of polynomials
      of degree at most $p-1$ commuting with $\mc{C}$ and $\mc{D}(0)\neq 0$.
\end{itemize}
Any such sequence $\Phi$ will be called a {\em reduction of $(X,\G)$ to RS-form}.
\end{theorem}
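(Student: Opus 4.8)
The plan is to reduce $(X,\G)$ in two stages: a first stage of punctual blow-ups that desingularizes $\G$ and displays $X$ as the field attached to a system of meromorphic ODEs, and a second stage, modelled on Turrittin's theorem \cite{Tur}, that normalizes the principal transverse linear part by ramifications, further blow-ups and analytic changes of the dependent variable. For the first stage, note that since $X(0)=0$ the origin lies in the singular locus of $X$, so $\{0\}$ is a permissible center; blowing it up and, at each step, following the point of the new divisor prescribed by the tangent of the transform of $\G$, I would perform an embedded reduction of singularities of the formal curve $\G$. This halts after finitely many steps, each of them a permissible blow-up, and by Proposition~\ref{pro:admissible-blow-up-X} the multiplicity of the successive transforms does not drop below $\nu_0(X)\ge1$, so the followed point stays singular and the process may be iterated. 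Choosing the sequence so that the transform of $\G$ meets the exceptional locus at a smooth point, I obtain analytic coordinates $(x,\yy)$ in which the total divisor is the single smooth hypersurface $\{x=0\}$, invariant for the transform of $X$, while the transform of $\G$ is non-singular, tangent to the $x$-axis and transversal to $\{x=0\}$.

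With $\{x=0\}$ invariant the transform factors as $X=x^{k}\bar X$, and $\bar X$ may be written $\bar X=x^{p+1}u(x,\yy)\,\partial_x+\bigl(c(x)+M(x)\yy+O(\|\yy\|^2)\bigr)\partial_\yy$, to which one attaches the meromorphic system $x^{p+1}u\,\yy'=c(x)+M(x)\yy+O(\|\yy\|^2)$. This is the object on which Turrittin's theory operates. The remaining task is to bring the principal part of $M(x)$ to the normal form $\mc D(x)+x^p\mc C$, with $\mc D$ diagonal of polynomials of degree at most $p-1$, $\mc D(0)\neq0$ and $\mc C$ constant commuting with $\mc D$; in particular this fixes the Poincar\'{e} rank $p$ and, through the invariance of $\G$, the matching order $x^{p+1}$ of the radial component.

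For the second stage I would run the classical algorithm, realizing each elementary step inside the permissible class. Since every holomorphic diffeomorphism is permissible (type~1), the splitting of the leading matrix $M(0)$ along distinct eigenvalue clusters, and each scalar shift of an eigenvalue, can be carried out freely by an analytic gauge transformation $\yy\mapsto P(x)\yy$ with $P$ invertible; moreover, as only the principal linear part is to be normalized, a polynomial (hence convergent) $P$ suffices, so the a priori formal gauge of Turrittin's theorem is replaced by an honest analytic one, the untouched coefficients being absorbed into the $O(x^{p+1})\yy$ and $O(\|\yy\|^2)$ remainders. The rank-lowering shearings $y_i\mapsto x\,y_i$ are realized as permissible blow-ups with centers $Z=\{x=y_{i_1}=\cdots=y_{i_t}=0\}$, which are transversal to $\G$; and when the leading matrix cannot be split without fractional exponents a permissible $q$-ramification $x\mapsto x^q$ is performed (legitimate now that $\G$ is non-singular and $\{x=0\}$ invariant and transversal). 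Radial order and transverse Poincar\'{e} rank are both multiplied by $q$ under a $q$-ramification and transform coherently under the shearing blow-ups, so their matching is preserved, and the algorithm terminates by the same finiteness argument as Turrittin's (a numerical invariant of the system strictly decreasing at each non-gauge step); the final form is exactly \eqref{eq:X-RS}.

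The delicate point is not the formal combinatorics of Turrittin's algorithm, which is classical, but the verification that its shearing steps can be realized by \emph{permissible} blow-ups: one must check that each center $Z=\{x=y_{i_1}=\cdots=y_{i_t}=0\}$ is invariant for the current vector field and satisfies the multiplicity inequality $\nu_Z(X)\ge\nu_0(X)$. The invariance must be secured beforehand, by arranging through the gauge step that the relevant sum of eigenspaces is a coordinate subspace invariant for $\bar X$, and the inequality is precisely what Proposition~\ref{pro:admissible-blow-up-X} needs in order that the transform remain an infinitesimal generator; it is also what makes the whole reduction lift, through Proposition~\ref{pro:admissible-blowing-up}, to the diffeomorphism statement. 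A further technical point, to be handled in parallel, is that the nonlinear data $c(x)$ and $O(\|\yy\|^2)$ are transported by all these operations and must be shown not to interfere with the normalization of the linear part.
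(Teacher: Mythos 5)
Your stage 1 (punctual permissible blow-ups along the infinitely near points of $\G$, bringing $X$ to the form $x^l\left[x^{q+1}u\,\frac{\partial}{\partial x}+(c(x)+A(x)\yy+\Theta)\frac{\partial}{\partial\yy}\right]$) and your idea of realizing Turrittin's T-transformations inside the permissible class (gauges as type-1 diffeomorphisms, shearings as codimension-two blow-ups, ramifications as type-3) coincide with the paper. But the two points you defer --- securing invariance of the shearing centers and showing the nonlinear data ``do not interfere'' --- are precisely the core of the proof, and the mechanisms you sketch for them fail. Invariance of a center $Z=\{x=y_{i_1}=\cdots=y_{i_t}=0\}$ for the \emph{full} field cannot be ``arranged through the gauge step'': a transformation linear in $\yy$ can never remove the obstruction coming from the quadratic part (monomials of $\Theta$ in the $y_{i_j}$-components which involve none of $y_{i_1},\dots,y_{i_t}$ and are not divisible by $x$); invariance of the linearized system says nothing about these. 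Moreover, under an unequal shearing a monomial $x^a y_j^2$ in the $i$-th component becomes $x^{a-k_i}\wt{y}_j^2$, and $c_i$ becomes $c_i/x^{k_i}$, so the $x$-divisibility of both $c$ and $\Theta$ degrades at each step; to ``transport them in parallel'' you must know \emph{in advance} the total shearing the algorithm will perform, yet in your scheme the algorithm is run on the successive linear parts of the nonlinear system, which depend on how it was prepared. This circularity is the genuine difficulty, and your outline does not break it.

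The paper breaks it with three ingredients absent from your proposal. First, Turrittin's theorem is applied not to the linear part at $\yy=0$ but to the auxiliary \emph{linear} field $Y=x^l\left[x^{q+1}u(x,\bar{\g}(x))\frac{\partial}{\partial x}+\hat{A}(x)\ww\frac{\partial}{\partial\ww}\right]$ whose coefficients are the restrictions along the formal curve (the formal change $\yy=\hat{\yy}+\bar{\g}(x)$ is used only to \emph{define} $Y$, never performed as a transformation of $X$); this fixes the whole sequence $\Psi$ independently of any preparation. Permissibility of its shearing centers is proved indirectly: a non-invariant center would create poles after blow-up, strictly raising the Poincar\'{e} rank, contradicting Remark~\ref{rk:shearing}. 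Second, with $\Psi$ fixed, Remark~\ref{rk:finite-jets} gives a jet order $N'$ such that $J_{N'}$-agreement of fields forces $J_{k+p+1}$-agreement of their transforms; only \emph{then} does one prepare the nonlinear field by the permissible diffeomorphism $\phi_m:\yy\mapsto\yy+J_m\bar{\g}(x)$ (so $\ord(c_m)\geq m$) and the global shearing $\psi_M:\ww\mapsto x^M\ww$ (so all nonlinear terms are divisible by a high power of $x$, which is what makes $\Psi$ permissible for the nonlinear field), with $m>M+1$ and $M$ large enough that $J_{N'}(X_{m,M})=J_{N'}(\psi_M^*Y)$. Third, the conclusion is transferred from $\Psi^*Y$ to $\Psi^*(X_{m,M})$ by this finite determinacy, after accounting for the shift $W_M=Mx^{l+q}u(x,\bar{\g}(x))I_{n-1}\ww\frac{\partial}{\partial\ww}$ created by $\psi_M$ --- including the degenerate case where $\Psi^*(W_M)$ cancels $\mc{C}$, which needs a separate argument. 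Without the auxiliary field along $\G$, the $(\phi_m,\psi_M)$ preparation with orders chosen \emph{after} Turrittin's data is known, and the jet-transfer step, your plan cannot be completed as stated.
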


Theorem~\ref{th:reduction-singularities-F} is a consequence of Theorem~\ref{th:reduction-singularities-X-bis}  taking $X=\log F$ and using Proposition 5.3. Notice that in this case the number $k$ in equation (\ref{eq:X-RS}) is at least $1$ since $\nu_0(\Phi^*X)\geq 2$ by Proposition~\ref{pro:admissible-blow-up-X}.

Theorem~\ref{th:reduction-singularities-X-bis} is, in essence, quite well known
in the theory of reduction of singularities of vector fields as well
as in the theory of systems of meromorphic ODEs with irregular
singularity. It contains a weak form of ``local uniformization'' of
$X$ along $\G$, i.e. reduction to non-nilpotent linear part (in the
context of real vector fields in dimension three, see Cano et al.
\cite{Can-M-R} for a local uniformization along non-oscillating
trajectories and also Panazzolo \cite{Pan} for a global reduction of
singularities). The particular form of equation (\ref{eq:X-RS})
requires more than just a non-nilpotent linear part of the vector
field. In our case (reduction {\em along a formal invariant curve}),
that particular form is obtained, once we associate to $X$ a system
of $n-1$ meromorphic ODEs after some initial blow-ups, from
classical results in the theory of ODEs: see Turrittin \cite{Tur}
(for systems of linear ODEs), Braaksma \cite{Bra} (for the non-linear case) or Cano et al.
\cite{Can-Mou-San1,Can-Mou-San2} (for related statements for real
vector fields in dimension three). However, we could not find an
statement with the precise terms of
Theorem~\ref{th:reduction-singularities-X-bis} needed for the
purposes of this article. Hence, for the sake of completeness, we
provide here a self-contained proof to which we devote the rest of this section.

\strut

Let us first describe the situation after a
punctual blow-up. Let $(x,\yy)$ be adapted coordinates for $\G$ and
let $\phi:\Cd n\to\Cd n$ be a permissible blow-up for $(X,\G)$ at
the origin. There is a constant vector $\xi\in\C^{n-1}$ so that
$\phi$ is written, for some coordinates $(x,\wt{\yy})$, as
\begin{equation}\label{eq:point-blow-up-coordinates}
(x,\yy)=\phi(x,\wt{\yy})=(x,x(\wt{\yy}-\xi)).
\end{equation}
The total
transform of $X$ by $\phi$ can be written as $\wt{X}=x^{\nu_0(X)-1}\bar{X}$
where $x$ is a coordinate such that $E_\phi=\{x=0\}$ and $\bar{X}$
is a formal vector field singular at $0$ (in fact
$\nu_0(\wt{X})\geq\nu_0(X)$, see
Proposition~\ref{pro:admissible-blow-up-X}). We deduce that $\wt{X}$ is again
singular at the origin, which is then a permissible center for the transform
$(\wt{X},\wt{\G})$ of $(X,\G)$. Performing in this way the blow-ups
at the infinitely near points of $\G$ and using resolution of
singularities of curves, we can assume that $\G$
is non-singular and that there is a system of adapted coordinates
$(x,\yy)$ for $\G$ such that $X=x^{e}\bar{X}$, where $\bar{X}$ is not divisible by $x$ and $e\geq 1$ if
$\nu_0(X)\geq 2$ (and thus $\{x=0\}$ is invariant for $X$). Let
$\g(x)=(x,\bar{\g}(x))\in\C[[x]]^n$ be a parametrization of $\G$ in
these coordinates and write
$$
\bar{X}=a(x,\yy)\frac{\partial}{\partial x}+
 {\bf b}(x,\yy)\frac{\partial}{\partial\yy},
$$
where $a(x,\yy)\in\C[[x,\yy]]$. By invariance of $\G$, $X|_{\g(x)}$
is collinear to the vector $(1,\bar{\g}'(x))$ and then $a(\g(x))\neq 0$, since
$\G$ is not contained in the singular locus of $X$.

We may suppose that $\bar{X}$ is singular at the origin. In fact, if $\bar{X}(0)\neq 0$ we must have $e\geq 1$
and, since in this case $\G$ is the unique formal solution of
$\bar{X}$ at $0$ and it is transversal to $\{x=0\}$, we must have $a(0)=a_0\neq 0$. After a new blow-up at the origin, and
taking coordinates as in (\ref{eq:point-blow-up-coordinates}), the
transform of $X$ is written as
$$
\wt{X}=x^{e-1}\left[x\left(a_0+O(x)\right)\frac{\partial}{\partial
x}+\left(-a_0I_{n-1}\wt{\yy}+O(x)\right)\frac{\partial}{\partial\wt{\yy}}\right]
$$
which is of the required form (\ref{eq:X-RS}) with $k=e-1\geq 0$ and
$p=0$.

Assume then that $\bar{X}$ is singular. Let $r$ be the multiplicity
of the restriction $\bar{X}|_\G$ (notice that $r$ is the order of
the series $a(\g(x))$ and hence $1\leq r<\infty$). Let $\phi$ be the
permissible blow-up at the origin written in coordinates as in
(\ref{eq:point-blow-up-coordinates}), where in this case
$\xi=\bar{\g}'(0)$. The effect of this transformation on the vector
field is that the exponent of $x$ increases in any monomial of
$a(x,\yy)$ with positive degree in the $\yy$-variables and in any
monomial of the components of ${\bf b}(x,\yy)$ with degree at least
two in the $\yy$-variables. Thus, after several such blow-ups of
points, the new $a$-component of the transform of $\bar{X}$ is of
the form $x^ru(x,\yy)$ where $u(0,0)\neq 0$ and each monomial in the
new ${\bf b}$-components of degree at least two in the
$\yy$-variables is divisible by $x^r$. Moreover, the monomials of
degree 0 in the $\yy$-variables of the new ${\bf b}$-components can
be assumed to be divisible by $x^r$ by means of a translation of the
form $\wt{\yy}=\yy+Q(x)$, where $Q(x)$ is a polynomial truncation of
$\bar{\g}(x)\in\C[[x]]^{n-1}$.

We conclude that, up to permissible blow-ups of points and changes
of coordinates, the vector field $X$ may be written as
\begin{equation}\label{eq:systemEDOs-1}
X=x^l\left[x^{q+1}u(x,\yy)\frac{\partial}{\partial x}+(c(x)+A(x)\yy+
\Theta(x,\yy))\frac{\partial}{\partial\yy}\right]
\end{equation}
where  $l\geq 0, q\geq -1$, $u(0,0)\neq 0$, $c(0)=0$
and $\Theta\in\C[[x,\yy]]^{n-1}$ has order at least 2 in the
$\yy$-variables. We may assume $q\geq 0$ since otherwise
the vector field $x^{-l}X$ would be non-singular, a case already
treated above.  Moreover, up to increasing $l$ and decreasing $q$, we may assume also that $A(0)\neq 0$.

Notice that if $q=0$ then $X$ is already in the required form
(\ref{eq:X-RS}) with $l=k$ and $p=0$. We assume that $q\geq 1$. To
the vector field $X$ in (\ref{eq:systemEDOs-1}) we can associate
the system of $n-1$ formal meromorphic ODEs
\begin{equation}\label{eq:systemEDOs-2}
x^{q+1}\yy'=u(x,\yy)^{-1}\left(c(x)+A(x)\yy+ \Theta(x,\yy) \right).
\end{equation}
Systems of meromorphic ODEs are extensively
studied in the linear case
$$
x^{q+1}\yy'=B(x)\yy,
$$
where $q\geq-1$ and $B(x)$ is a matrix with formal coefficients. For such a system, the number
$q$ is called
 the {\em Poincar\'{e}
rank} if $B(0)\neq 0$; in this case, the origin is singular if $q\geq 0$, a {\em regular singularity} if $q=0$
and an {\em irregular singularity} if $q\geq1$.
We will use the following particular
result of this theory (see Turrittin \cite{Tur} or also Wasow
\cite{Was} or Balser \cite{Bal}).
\begin{theorem}[Turrittin]\label{th:turritin}
Consider a formal linear $m$-dimensional system
$
x^{q+1}\yy'=B(x)\yy,
$
where $q$ is the Poincar\'{e} rank, and assume that $q\geq 1$. Then,
after a finite number of transformations of the variables among the
following types
\begin{enumerate}[$\bullet$]
\item Polynomial linear transformation
  $$
 L(P(x)):\;\;\yy=P(x)\wt{\yy},\;
 P(x)\in\mc{M}_{m}(\C[x])\mbox{ with }P(0)\mbox{ invertible}.
$$
  \item Shearing transformation
  $$
 S(k_1,...,k_m):\;\;
\yy=\diag(x^{k_1},...,x^{k_m})\wt{\yy},\;k_j\in\N_{\geq 0}.
  $$
  \item Ramification
  $$
R(\alpha):\;\; x=\wt{x}^\alpha,\; \alpha\in\N_{>0}.
  $$
\end{enumerate}
the system transforms into a system
$$x^{p+1}\wt{\yy}'=(\mc{D}(x)+x^p\mc{C}+O(x^{p+1}))\wt{\yy},
$$ where either $p=0$ and $\mc{C}\neq0$ or $p\geq 1$, $\mc{D}(x)$ is a diagonal matrix of
polynomials
      of degree at most $p-1$ commuting with $\mc{C}$ and $\mc{D}(0)\neq 0$.
\end{theorem}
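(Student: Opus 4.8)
The plan is to argue by induction on the dimension $m$, using the leading matrix $B_0=B(0)$ to drive a case analysis, in the spirit of the classical Turrittin algorithm. The base case $m=1$ requires no transformation at all: writing $b(x)=\sum_{i\ge0}b_ix^i$ with $b_0\neq0$, the scalar equation $x^{q+1}y'=b(x)y$ is already in the asserted form with $p=q$, $\mc D(x)=\sum_{i=0}^{q-1}b_ix^i$ and $\mc C=(b_q)$. For the inductive step I would distinguish according to whether $B_0$ has several distinct eigenvalues or a single one.

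When $B_0$ has at least two distinct eigenvalues, the first step is a \emph{splitting}. Choosing $P_0$ so that $P_0^{-1}B_0P_0$ is block diagonal along the eigenvalue clusters of $B_0$, and correcting $P_0$ order by order (the relevant Sylvester equations are solvable because the clusters have disjoint spectra), one produces a polynomial transformation $L(P(x))$ with $P(0)=P_0$ after which the coefficient matrix is block diagonal modulo a remainder of arbitrarily high order in $x$; since the target form only constrains the coefficient matrix up to $O(x^{p+1})$, truncating $P(x)$ to a polynomial at a sufficiently high order is harmless. Each diagonal block is a system of dimension $<m$ with the same Poincar\'e rank, so the induction hypothesis applies to it after a ramification $R(\alpha_i)$. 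To reassemble the blocks I would pass to a common ramification $\alpha=\mathrm{lcm}_i\alpha_i$ and a common $p=\max_i p_i$: a block already in the form $x^{p_i+1}\wt{\yy}'=(\mc D_i(x)+x^{p_i}\mc C_i+O(x^{p_i+1}))\wt{\yy}$ is rewritten, after multiplying the equation by $x^{p-p_i}$, as $x^{p+1}\wt{\yy}'=(x^{p-p_i}\mc D_i(x)+x^{p}\mc C_i+O(x^{p+1}))\wt{\yy}$, whose diagonal part $x^{p-p_i}\mc D_i(x)$ still has degree $\le p-1$ and whose residue $\mc C_i$ still commutes with it. The block-diagonal assembly then has diagonal $\mc D=\diag(\dots,x^{p-p_i}\mc D_i,\dots)$ and constant residue $\mc C=\diag(\dots,\mc C_i,\dots)$ with $[\mc D,\mc C]=0$, i.e.\ it is in RS-form.

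The substantial case is when $B_0$ has a single eigenvalue $\lambda$. If $B_0=\lambda I$ is scalar, I would locate the first coefficient $B_j$, $j\ge1$, that is not a scalar multiple of the identity; if no such $j$ exists then $B(x)=b(x)I$ and the system decouples into $m$ scalar equations, already in normal form by the base case. When $B_0$ has a nontrivial nilpotent part $N=B_0-\lambda I$, or once a non-scalar $B_j$ has been located, the mechanism is a \emph{shearing} $S(k_1,\dots,k_m)$ dictated by the Newton polygon of the system, preceded when necessary by a ramification $R(\alpha)$ to render the relevant slope integral. With the $k_j$ chosen proportional to the Jordan filtration of $N$ (after ramification), the transformed leading matrix incorporates subdiagonal contributions coming from higher-order terms and acquires more than one distinct eigenvalue, throwing us back to the splitting case above.

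The hard part will be termination. A single shearing on an unbreakable Jordan block need not lower the dimension and may even raise the Poincar\'e rank, so one cannot induct naively; moreover, the high-order off-diagonal remainder left by the truncated splitting can be amplified by the negative exponent differences $k_i-k_j$ occurring in later shearings, so the truncation order $N$ must be chosen large relative to the total shearing budget of the block reductions. The fix is to attach to the system the Katz invariant---the largest slope of its Newton polygon together with the denominator needed to clear it---and to verify that each shearing step of the single-eigenvalue case strictly decreases this rational invariant, while each splitting step strictly decreases the dimension. Since after a fixed ramification the admissible slopes have bounded denominator, the invariant cannot decrease indefinitely, and the algorithm halts at a system whose leading matrix genuinely splits, at which point the induction on $m$ closes. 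Checking that the prescribed shearing produces exactly this decrease, and bookkeeping the ramifications so that all shearing exponents stay in $\N_{\ge0}$, is the most delicate and computational portion of the argument.
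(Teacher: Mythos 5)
The paper itself does not prove this statement: Theorem~\ref{th:turritin} is imported from the literature (Turrittin \cite{Tur}, Wasow \cite{Was} \S 19, Balser \cite{Bal}), and the only feature of its proof the paper ever uses is the observation recorded in Remark~\ref{rk:shearing}, namely that the shearings chosen there never strictly increase the Poincar\'e rank. So your proposal has to be measured against the classical argument the paper cites, and your overall architecture does coincide with it: induction on the dimension $m$, a splitting lemma when the leading matrix has at least two eigenvalues, and Newton-polygon-guided shearings preceded by ramifications in the single-eigenvalue case. Your base case, your splitting step (including the truncation of the formal block-diagonalizing transformation and the remark that the truncation order must dominate the later shearing spread), and your reassembly of blocks via a common ramification and a common Poincar\'e rank are all correct.

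The genuine gap is the termination argument, which is exactly where the substance of Turrittin's theorem lies, and your proposed mechanism for it is not merely unfinished but wrong as stated. The Katz invariant --- the largest slope of the Newton polygon, equivalently an invariant of the differential module over $\C((x))$ --- is unchanged by polynomial linear transformations $L(P(x))$ and by shearings $S(k_1,\dots,k_m)$, because both are meromorphic gauge transformations and do not alter the underlying module; under a ramification $R(\alpha)$ it is multiplied by $\alpha\geq 1$. Hence no sequence of the allowed transformations can ever strictly decrease it, and the sentence ``each shearing step of the single-eigenvalue case strictly decreases this rational invariant'' cannot be the engine of termination (if instead you mean the gauge-dependent polygon of the current presentation, the claimed decrease is precisely what your own caveat --- that a shearing may raise the Poincar\'e rank --- puts in doubt, and it is left unproved). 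In the classical proofs the quantity that descends is gauge-dependent (the Poincar\'e rank of the current presentation, or Wasow's lexicographic refinement of it), and the Katz invariant enters only as the floor below which it cannot go: once the rank of the ramified presentation equals the now-integral Katz invariant, the leading matrix is forced to be non-nilpotent modulo scalars, the splitting lemma applies, and the dimension drops. Two further points a complete proof must address: the regular-singular case (Katz invariant $0$ but positive rank) needs a separate rank-reduction argument to reach the form $p=0$, $\mc{C}\neq 0$; and the scalar shift your single-eigenvalue analysis implicitly performs (replacing $B(x)$ by $B(x)-\lambda I$, which is not an allowed transformation) must be justified by checking that scalar multiples of the identity pass through all three types of transformations and can be absorbed into $\mc{D}$ and $\mc{C}$ at the end. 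Also, your earlier assertion that one round of shearing always produces several distinct eigenvalues is false in general --- it is only guaranteed at the moment the rank has descended to the Katz invariant --- which is another reason the termination has to be restructured rather than ``checked''.
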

Polynomial linear transformations, shearing transformations and ramifications, as defined in Turrittin's Theorem, will be called {\em T-transformations}.

\begin{remark}\label{rk:shearing}
Notice that a polynomial linear transformation does not change the Poincar\'{e} rank and that a ramification $R(\alpha)$ multiplies it by $\alpha$. The effect of a shearing transformation $S(k_1,...,k_m)$ on the Poincar\'{e} rank depends on the parameters $k_1,...,k_m$. Looking carefully at the proof of Theorem~\ref{th:turritin} (see for example \cite{Was}, section 19), we may observe that the shearing transformations are chosen so that their application never makes the Poincar\'{e} rank increase strictly.
\end{remark}

We resume the proof of Theorem~\ref{th:reduction-singularities-X-bis}. Assume that $X$ is written as
in (\ref{eq:systemEDOs-1}). Consider
the formal change of variables $ \yy=\hat{\yy}+\bar{\g}(x)$, for
which $\G=\{\hat{\yy}=0\}$, and write $X$ in the variables
$(x,\hat{\yy})$ as
$$x^l\left[x^{q+1}u(x,\hat{\yy}+\bar{\g}(x))\frac{\partial}{\partial
x}+(\hat{A}(x)\hat{\yy}+
\hat{\Theta}(x,\hat{\yy}))\frac{\partial}{\partial\hat{\yy}}\right]$$
where $\hat{A}(0)\neq 0$ and $\hat{\Theta}(x,\hat{\yy})=O(\|\hat{\yy}\|^2)$.
The system (\ref{eq:systemEDOs-2}) becomes
$$x^{q+1}\hat{\yy}'=u(x,\hat{\yy}+\bar{\g}(x))^{-1}\left(\hat{A}(x)\hat{\yy}+
\hat{\Theta}(x,\hat{\yy})\right).
$$
Apply Theorem~\ref{th:turritin} to the linear system
$x^{q+1}\ww'=u(x,\bar{\g}(x))^{-1}\hat{A}(x)\ww$ with $m=n-1$,
associated to the vector field
$$Y=x^l\left[x^{q+1}u(x,\bar{\g}(x))\frac{\partial}{\partial
x}+\hat{A}(x)\ww\frac{\partial}{\partial\ww}\right].$$ Let us
justify that the T-transformations involved are sequences of
permissible transformations for $(Y,\{\ww=0\})$. This is clear for
polynomial linear transformations and for ramifications. On the
other hand, a shearing transformation can be viewed as a composition
of blow-ups with centers of codimension two. More precisely,
$S(k_2,...,k_{n})$ is obtained by blowing-up $k_2$ times the center
with equations $\{x=w_2=0\}$ (considering repeatedly the expression $\phi(x,\ww)=(x,xw_2,w_3,...,w_n)$),
followed by blowing-up $k_3$ times the center $\{x=w_3=0\}$, and so
on. It suffices to show that such local blow-ups are permissible. Consider one of such centers, for instance $Z=\{x=w_2=0\}$, and let $\phi(x,\ww)=(x,xw_2,w_3,...,w_n)$ be the blow-up with center $Z$. One can check that the
transform of $x^{-l}Y$ by $\phi$, which is a meromorphic vector
field, has coefficients with no poles if and only if $x^{-l}Y(w_2)$
belongs to the ideal $I(Z)=(x,w_2)$, i.e. if and only if $Z$ is
invariant for $x^{-l}Y$. Observe that if the resulting vector field
has poles then the Poincar\'{e} rank of the associated linear system
 will increase and so, by Remark~\ref{rk:shearing}, we conclude that $Z$
 is invariant for $x^{-l}Y$ and thus for $Y$. Moreover, $Y(x)=O(x^{l+1})$ and $Y(w_2)=x^l(\sum_{i=2}^n\lambda_i w_i+O(x))$,
where $\lambda_i\in\C$ and, being $Z$ invariant for $x^{-l}Y$,
$\lambda_i=0$ if $i\neq 2$. We conclude that $\nu_Z(Y)\geq
l+1=\nu_0(Y)$ and hence $Z$ is a permissible center for
$(Y,\{\ww=0\})$.

Denote by $\Psi:(x,\wt\ww)\mapsto(x^{\beta},\Psi_2(x,\wt\ww))$ the
composition of the T-transformations used in Turrittin's process for
$Y$. Then, taking into account the conclusion in
Theorem~\ref{th:turritin}, $\Psi$ is a reduction of $(Y,\{\ww=0\})$
to RS-form. More precisely,
\begin{equation}\label{eq:turrittin-to-linear}
\Psi^*Y=x^k\left[x^{p+1}\beta^{-1}u(x^{\beta},\bar{\g}(x^{\beta}))\frac{\partial}{\partial
x}+\left(\mc{D}(x)+x^p\mc{C}+O(x^{p+1})\right)\wt{\ww}\frac{\partial}{\partial\wt{\ww}}\right]
\end{equation}
with the conditions for $k$, $p$, $\mc{D}(x)$ and $\mc{C}$ stated in
Theorem~\ref{th:reduction-singularities-X-bis}. Observe that $k+p=\beta(l+q)$.

 For any $m$, consider the diffeomorphism $\phi_m:(\C^n,0)\to(\C^n,0)$
 given by the change of variables $\yy=\wt{\yy}+J_m\bar{\g}(x)$.
 The transform $X_m$
 of $X$ by $\phi_m$ is written as
$$
X_m=x^l\left[x^{q+1}u(x,\wt{\yy}+J_m\bar{\g}(x))\frac{\partial}{\partial
x}+\left(c_m(x)+A_m(x)\wt{\yy}+
\Theta_m(x,\wt{\yy})\right)\frac{\partial}{\partial\wt{\yy}}\right],
$$
where $A_m(x),\Theta_m(x,\wt{\yy})$ converge to $\hat{A}(x),
\hat\Theta(x,\wt\yy)$ respectively in the Krull topology and
$\Theta_m(x,\wt{\yy})=O(\|\wt{\yy}\|^2)$. Moreover, the transform
$\G_m$ of $\G$ has order of contact at least $m$ with the $x$-axis
$\{\wt{\yy}=0\}$ and thus the order of $c_m(x)$ is at least $m$ (see
Remark~\ref{rk:RS-form}). On the other hand, consider for $M\in\N$
the map $\psi_M:(x,\ww)\mapsto(x,\tilde{\yy}=x^M\ww)$. Notice that
$\psi_M$ is a composition of punctual permissible blow-ups for
$(X_m,\G_m)$ if $m>M+1$. Let $(X_{m,M},\G_{m,M})$ be the transform
of $(X,\G)$ by $\psi_M\circ\phi_m$, when $m>M+1$. Taking into
account the effect, mentioned above, of punctual blow-ups in the
monomials of positive order in the $\yy$-variables, it is not
difficult to check that, if $M$ is sufficiently big, then the map
$\Psi$ is a sequence of permissible transformations, not only for
$(Y,\{\ww=0\})$, but also for $(X_{m,M},\G_{m,M})$ (the centers
involved have equations of the form $\{x=w_j=0\}$ and the property
of being permissible for $(X_{m,M},\G_{m,M})$, by the same arguments
used above for $(Y,\{\ww=0\})$, only depends on the linear part in
each step as long as $x$ divides the non-linear terms).

In order to finish the proof of
Theorem~\ref{th:reduction-singularities-X-bis}, it suffices to prove
that if $m$ and $M$ are sufficiently big then $\Psi$ is a reduction
of $(X_{m,M},\G_{m,M})$ to RS-form: the map
$\Psi\circ\psi_M\circ\phi_m$ will then be a reduction of $(X,\G)$ to
RS-form. We use Remark~\ref{rk:finite-jets} repeatedly for any of
the permissible transformations of the composition $\Psi$: there
exists $N'$ such that, given a vector field $W$, if $J_{N'}(W)=J_{N'}(Y)$
and $\Psi$ is a sequence of permissible transformations for
$W$ (with no conditions concerning any formal invariant curve) then
\begin{equation}\label{eq:jet-Z}
J_{k+p+1}(\Psi^*W)=
J_{k+p+1}(\Psi^*Y).
\end{equation}
Now, choose $m, M$ sufficiently big with $m>M+1$ so that
$J_{N'}(X_{m,M})=J_{N'}(\psi_M^*Y)$. Notice that $\psi_M$ is a
sequence of permissible blow-ups for $(Y,\{\ww=0\})$ and that
$$
\psi_M^*Y=x^l\left[x^{q+1}u(x,\bar{\g}(x))\frac{\partial}{\partial
x}+\left(\hat{A}(x)-Mx^qu(x,\bar{\g}(x))I_{n-1}\right)\ww\frac{\partial}{\partial\ww}\right].$$
Hence, if we put
$W_M=Mx^{l+q}u(x,\bar{\g}(x))I_{n-1}\ww\frac{\partial}{\partial\ww}$
then $J_{N'}(X_{m,M}+W_M)=J_{N'}(Y)$ and, by (\ref{eq:jet-Z}) and
(\ref{eq:turrittin-to-linear}), and since $\Psi$ is also permissible
for $X_{m,M}+W_M$, we have that $\Psi^*(X_{m,M}+W_M)$ has the desired
form (\ref{eq:X-RS}) of
Theorem~\ref{th:reduction-singularities-X-bis} with
$k+p=\beta(l+q)$. Notice finally, using the expressions (see
Theorem~\ref{th:turritin}) of the T-transformations involved in the
map $\Psi$, that
$$
\Psi^*\left(W_M\right)=
x^{\beta(l+q)}v(x)I_{n-1}\wt{\ww}\frac{\partial}{\partial\wt{\ww}},
$$
where $v(0)\neq 0$. We conclude that $\Psi^*(X_{m,M})$ is also in the
form (\ref{eq:X-RS}) and thus $\Psi$ is a reduction of
$(X_{m,M},\G_{m,M})$ to RS-form as wanted  (we need to remark that it is possible that $p=0$ and $\mc{C}=-v(0)I_{n-1}$ in which case the resulting matrix $\mc{C}$ for $\Psi^*(X_{m,M})$ vanishes; however, in this case, $x^{-(k+1)}\Psi^*(X_{m,M})$ is non singular and can be treated as we did above).
Theorem~\ref{th:reduction-singularities-X-bis} is finished.

\section{Conclusion}

In this final section, we restate and prove Theorem~\ref{Theorem2}
in the introduction.

 Given a diffeomorphism $F$ with an invariant
formal curve $\G$ not contained in the set of fixed points, we say
that $\G$ is {\em well placed for $F$} if there exists a sequence
$\Phi$ that reduces $(F,\G)$ to Ramis-Sibuya form as in
Theorem~\ref{th:reduction-singularities-F} and such that one of the
attracting directions of the transform $(\wt{F},\wt{\G})$ is
contained in the saddle domain.

\begin{theorem} Let $F\in\dif1\Cd n$ have an invariant formal curve $\G$ not
contained in $\Fix(F)$. If $\Gamma$ is well placed for $F$, then there
exists at least one attracting parabolic curve for $F$ asymptotic to
$\Gamma$.
\end{theorem}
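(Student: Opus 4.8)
The statement is a synthesis of three earlier results: the reduction theorem (Theorem~\ref{th:reduction-singularities-F}), the existence of parabolic curves in Ramis-Sibuya form (Theorem~\ref{th:main-technical}), and the compatibility of permissible transformations with parabolic curves (Proposition~\ref{pro:admissible-blowing-up}). The plan is to produce a parabolic curve upstairs, after reduction to RS-form, and then to blow it down through the reduction sequence, checking at each stage the hypotheses of Proposition~\ref{pro:admissible-blowing-up}.

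First, since $\G$ is well placed for $F$, by definition there is a sequence $\Phi=\phi_l\circ\cdots\circ\phi_1$ of permissible transformations reducing $(F,\G)$ to a pair $(\wt F,\wt\G)$ in Ramis-Sibuya form for which some attracting direction $\tau$ lies in the saddle domain $V$, in suitable RS-coordinates $(x,\yy)$. Applying Theorem~\ref{th:main-technical} to $(\wt F,\wt\G)$ and $\tau$ yields $\eta_0,\delta_0>0$ with $S(\tau,\eta_0,\delta_0)\subset V$ and an attracting parabolic curve $\wt\vp\colon S(\tau,\eta_0,\delta_0)\to\C^n$, $\wt\vp(x)=(x,\bar\vp(x))$, of $\wt F$ asymptotic to $\wt\G$. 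I then descend the tower. Writing $(F_j,\G_j)$ for the intermediate transforms, so that $(F_l,\G_l)=(\wt F,\wt\G)$, $(F_0,\G_0)=(F,\G)$ and $(F_j,\G_j)$ is the transform of $(F_{j-1},\G_{j-1})$ by $\phi_j$, I set $\vp_l=\wt\vp$ and inductively $\vp_{j-1}=\phi_j\circ\vp_j$, maintaining the invariant that $\vp_j$ is an attracting parabolic curve of $F_j$ asymptotic to $\G_j$ on some subsector $S(\tau,\eta,\delta)$.

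To pass from $j$ to $j-1$ via Proposition~\ref{pro:admissible-blowing-up}, I must verify: (a) $\vp_j(S(\tau,\eta,\delta))$ lies in the domain of $\phi_j$, which holds after shrinking $\delta$ because $\vp_j(x)\to0$; (b) $\vp_j$ avoids the exceptional divisor $E_{\phi_j}$; and (c) $\vp_{j-1}$ is injective. For (b), the key observation is that the strict transform $\G_j$ is never contained in $E_{\phi_j}$ (for a blow-up the chosen point on the divisor is the one corresponding to the tangent of $\G_{j-1}$, and for a ramification $\G_j$ is transversal to $E_{\phi_j}$); hence, for a coordinate $w$ with $E_{\phi_j}=\{w=0\}$, the series $w$ evaluated on a parametrization of $\G_j$ is not identically zero, so, $\vp_j$ being asymptotic to $\G_j$, the function $w\circ\vp_j$ has a nonzero leading asymptotic term and does not vanish on a sufficiently small subsector. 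For (c), a permissible blow-up is injective off $E_{\phi_j}$, so injectivity follows from (b); the only threat comes from the permissible ramifications. Since the first coordinate of $\wt\vp$ is $x$ and each blow-up preserves while each ramification raises to a fixed power the coordinate cutting the exceptional divisor, the first coordinate of $\vp=\vp_0$ behaves to leading order like $x^{N}$, where $N$ is the product of the ramification indices occurring in $\Phi$. As $x\mapsto x^{N}$ is injective on any sector of opening $<2\pi/N$ and small radius, I fix $\eta\le\eta_0$ with $\eta<2\pi/N$ at the outset (legitimate by Theorem~\ref{th:main-technical}(ii), which allows restricting $\wt\vp$ to $S(\tau,\eta,\delta')$), which forces $\vp$ to be injective.

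With (a)--(c) checked at every step, iterating Proposition~\ref{pro:admissible-blowing-up} over the transformations $\phi_j$ produces $\vp=\Phi\circ\wt\vp$, an attracting parabolic curve of $F$ asymptotic to $\G$; the attracting character persists all the way down because each $\phi_j$ conjugates $F_{j-1}$ with $F_j$, so invariance and stability descend with the map. The main obstacle is exactly the bookkeeping of (b) and (c) along the descent: the clean graph form $x\mapsto(x,\bar\vp(x))$ available upstairs is progressively distorted by the blow-downs, and one must control the evolution of the divisor-cutting coordinate to guarantee simultaneously that each intermediate curve stays off its exceptional divisor and that the accumulated ramifications do not destroy injectivity.
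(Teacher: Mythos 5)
Your proof is correct and follows the same overall skeleton as the paper's (Theorem~\ref{th:main-technical} upstairs, then a descent through $\Phi$ via Proposition~\ref{pro:admissible-blowing-up}), but it diverges on the key verification of divisor avoidance. The paper settles this in one stroke: Theorem~\ref{th:reduction-singularities-F} includes the clause $\Fix(\wt F)=E_\Phi$ precisely so that one can say the parabolic curve $\wt\vp$ never meets the \emph{total} divisor $E_\Phi$ at all, because a parabolic curve cannot pass through a fixed point (stability forces $\lim_j\vp^{-1}(F^j(\vp(s)))=0$, impossible at a fixed point since $0\notin\Delta$); hence every intermediate push-down automatically misses the relevant exceptional divisor, with no shrinking of sectors. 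You instead argue asymptotically at each stage: since the strict transform $\G_j$ is never contained in $E_{\phi_j}$, the divisor-cutting coordinate evaluated on $\vp_j$ has a nonzero leading term, so $\vp_j$ misses $E_{\phi_j}$ on a smaller sector. This is valid and has the merit of not using the clause $\Fix(\wt F)=E_\Phi$ at all, but it costs you repeated shrinking, and here one point deserves emphasis: the restriction of an \emph{intermediate} parabolic curve to a subsector is not automatically a parabolic curve (invariance $F_j(\vp_j(\Delta'))\subset\vp_j(\Delta')$ can fail under restriction), so every shrinking must be routed back to the top, restricting $\wt\vp$ via Theorem~\ref{th:main-technical}(ii) and re-descending — which works because there are finitely many smallness conditions, all stable under further shrinking. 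On injectivity you are actually more explicit than the paper, which dispatches it in a parenthesis; your bound $\eta<2\pi/N$ with $N$ the product of ramification indices is the right quantitative statement, though the claim that the first coordinate of $\vp_0$ ``behaves like $x^N$'' should be phrased invariantly (the covering degree of the pushed-down parametrization over an irreducible one multiplies by $q$ at each ramification and is untouched by blow-downs and diffeomorphisms), since the literal ``first coordinate'' is not canonically tracked through interleaved coordinate changes.
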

\begin{proof}
By Theorem~\ref{th:main-technical}, there exists an attracting
parabolic curve $\wt{\vp}$ of $\wt{F}$ asymptotic to $\wt{\G}$. This
parabolic curve does not cut the total divisor $E_\Phi$ of $\Phi$,
since $E_\Phi=\Fix(\wt{F})$ by
Theorem~\ref{th:reduction-singularities-F}. Hence, if
$\Phi=\phi_l\circ\phi_{l-1}\circ\cdots\circ\phi_1$ and
$\Phi_j=\phi_l\circ\phi_{l-1}\circ\cdots\circ\phi_{l-j}$ for
$j=0,...,l-2$, the map $\vp_j=\Phi_j\circ\wt{\vp}$ does not cut
either the exceptional divisor $E_{\phi_{l-j-1}}$ of $\phi_{l-j-1}$.
Applying recursively Proposition~\ref{pro:admissible-blowing-up} for
$j=0,...,l-2$, the map $\vp=\Phi\circ\wt{\vp}$ is a parabolic curve
of $F$ asymptotic to $\G$ (notice that we may assume that $\vp$ is
injective restricting the domain of $\wt{\vp}$ by part (ii) of
Theorem~\ref{th:main-technical}).
\end{proof}
Notice finally that the remark mentioned in the introduction about finite determinacy of the property of being well placed is a consequence of Remark~\ref{rk:finite-jets}.


\end{document}